\edef\restoreparindent{\parindent=\the\parindent\relax}
\makeatletter\usepackage{microtype}\g@addto@macro\@verbatim{\microtypesetup{activate=false}}\makeatother%
\theoremstyle{plain}
\newtheorem{theorem}{Theorem}[section]
\newtheorem{lemma}[theorem]{Lemma}
\newtheorem{proposition}[theorem]{Proposition}
\newtheorem*{conjecture*}{Conjecture} 
\newtheorem{corollary}[theorem]{Corollary}
\theoremstyle{definition}
\newtheorem{definition}[theorem]{Definition} 
\newtheorem{example}[theorem]{Example}
\newtheorem{assumption}[theorem]{Assumption}
\theoremstyle{remark}
\newtheorem{remark}[theorem]{Remark}
\theoremstyle{plain}
\newtheorem{thm}[theorem]{Theorem}
\newtheorem*{thm*}{Th\'{e}or\`{e}me}
\newtheorem{lem}[theorem]{Lemma}
\newtheorem{pro}[theorem]{Proposition}
\newtheorem{cor}[theorem]{Corollary}
\newtheorem{conj}[theorem]{Conjecture}
\theoremstyle{definition}
\newtheorem{Def}[theorem]{Definition}
\numberwithin{equation}{section}
\newcommand\OO{{\mathcal{O}}}
\newcommand\CC{{\mathbb{C}}}
\newcommand\PP{{\mathbb{P}}}
\newcommand\QQ{{\mathbb{Q}}}
\newcommand\RR{{\mathbb{R}}}
\newcommand\ZZ{{\mathbb{Z}}}
\newcommand\Amp{{\rm Amp}} 
 \newcommand\Aut{{\rm Aut}} 
\newcommand\Bigc{{\rm Big}} 
\newcommand\Eff{{\rm Eff}} 
\newcommand\Effp{{\rm Eff}^{+}} 
\newcommand\Pseff{\overline{{\rm Eff}}}
\newcommand\Nef{{\rm Nef}}
\newcommand\Nefe{{\rm Nef}^e} \newcommand\Nefp{{\rm Nef}^{+}} 
\newcommand\Mov{\overline{{\rm Mov}}}
\newcommand\Move{\overline{{\rm Mov}}^e}
\newcommand\Movp{{\rm Mov}^{+}}
\newcommand\Movo{{\rm Mov}^{\circ}}
\newcommand\Pic{\text{\rm Pic}}
\newcommand\GL{{\rm GL}} 
\newcommand\id{{\rm id}}
\newcommand{\ssec}{\subsection}
\newcommand{\ol}{\overline}
\newcommand{\vast}{\bBigg@{4}}
\newcommand{\Vast}{\bBigg@{5}}
\newcommand{\wt}{\widetilde}
\newcommand{\cC}{\mathcal{C}}
\newcommand{\cM}{\mathcal{M}}
\newcommand{\cN}{\mathcal{N}}
\newcommand{\cO}{\mathcal{O}}
\newcommand{\cP}{\mathcal{P}}
\newcommand{\fF}{\mathfrak{F}}
\newcommand{\gD}{\Delta}
\newcommand{\gG}{\Gamma}
\newcommand{\gS}{\Sigma}
\newcommand{\ga}{\alpha}
\newcommand{\gb}{\beta}
\newcommand{\Alb}{\mathrm{Alb}}
\newcommand{\Cone}{\mathrm{Cone}}
\newcommand{\Div}{\mathrm{Div}}
\newcommand{\Exc}{\mathrm{Exc}}
\newcommand{\op}
\newcommand{\PsAut}{\mathrm{PsAut}}
\newcommand{\Stab}{\mathrm{Stab}}
\newcommand{\Supp}{\mathrm{Supp}\,}
\newcommand{\cnec}{\mathrel{:=}}
\newcommand{\dto}{\dashrightarrow}
\newcommand*\eto{%
	\xrightarrow[]{\raisebox{-0.25 em}{\smash{\ensuremath{\sim}}}}%
}
\def\acts{\mathrel{\reflectbox{$\righttoleftarrow$}}}
\begin{document}
	
	\title[The effective cone conjecture]{The effective cone conjecture for Calabi--Yau pairs}

	\author{C\'ecile Gachet}
    \address{Fakult\"at f\"ur Mathematik, Ruhr-Universit\"at Bochum, Universit\"atsstr. 150, Postfach IB 45, 44801 Bochum, Germany.}
	\email{cecile.gachet@rub.de}

	\author{Hsueh-Yung Lin}
	\address{Department of Mathematics, National Taiwan University, 
	and National Center for Theoretical Sciences,
	Taipei, Taiwan.}
	\email{hsuehyunglin@ntu.edu.tw}

	\author{Isabel Stenger}
	\address{Institute of Algebraic Geometry, Leibniz University Hannover, Welfengarten 1, 30167 Hannover , Germany.}
	\email{stenger@math.uni-hannover.de}
	
	\author{Long Wang}
	\address{Center for Mathematics and Interdisciplinary Sciences, Fudan University, Shanghai, 200433, China; Shanghai Institute for Mathematics and Interdisciplinary Sciences, Shanghai, 200433, China.}
	\email{wanglll@fudan.edu.cn}

	\begin{abstract}
 We formulate an {\it effective cone conjecture} for klt Calabi--Yau pairs $(X,\Delta)$ describing the structure of the cone of effective divisors $\Eff(X)$ modulo the action of the subgroup of pseudo-automorphisms $\PsAut(X,\Delta)$. Assuming the existence of good minimal models in dimension $ \dim X$, 
 known to hold in dimension up to $3$, 
 we prove that the effective cone conjecture for $(X,\Delta)$
 is equivalent to the Kawamata--Morrison--Totaro movable cone conjecture for $(X,\Delta)$, 
 among other statements.
 As an application, 
 we show that the movable cone conjecture unconditionally holds
 for the smooth Calabi--Yau threefolds introduced by Schoen and studied by Namikawa, Grassi and Morrison.
 We also show that for such a Calabi--Yau threefold $X$,  all of its minimal models, 
 apart from $X$ itself, have rational polyhedral nef cones. 
\end{abstract}

\maketitle

\section{Introduction}\label{sec_introduction}

The starting point of this paper is the following conjecture.

\begin{conjecture*}[Kawamata--Morrison cone conjecture]\label{conj-KMconeVar}
    Let $X$ be a normal $\QQ$-factorial terminal Calabi--Yau variety. Then
    \begin{enumerate}
    \setlength{\itemsep}{10pt}
        \item There is a rational polyhedral cone $\Pi\subset\Nef(X)$ such that 
    $$\Aut(X)\cdot \Pi = \Nefe(X) = \Nefp(X),$$ 
    and for every $g\in\Aut(X)$, $g^*{\Pi}^{\circ}\cap{\Pi}^{\circ}\neq\emptyset$ if and only if $g^*=\id$.
        \item There is a rational polyhedral cone $\Sigma\subset\Mov(X)$ such that 
    $$\PsAut(X)\cdot \Sigma = \Move(X) = \Movp(X),$$ 
    and for every $g\in \PsAut(X)$, $g^*{\Sigma}^{\circ}\cap{\Sigma}^{\circ} \neq\emptyset$ if and only if $g^*=\id$.
    \end{enumerate}
\end{conjecture*}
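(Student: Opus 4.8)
The plan is to prove the two statements in parallel, since each asserts that a naturally arithmetic group acts on a convex cone in $N^1(X)$ with a rational polyhedral fundamental domain; the disjointness clause (that $g^*\Pi^{\circ}\cap\Pi^{\circ}\neq\emptyset$ forces $g^*=\id$) is then part of what it means to be a fundamental domain, and follows formally once one is produced.

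First I would fix the arithmetic setup. The images of $\Aut(X)$ and $\PsAut(X)$ in $\GL(\NS(X))$ preserve the integral N\'eron--Severi lattice together with its intersection-theoretic data, so each lands in an arithmetic group $\Gamma$ stabilizing $\Nef(X)$, respectively $\Mov(X)$. The goal is thereby reduced to an instance of Looijenga's theorem on discrete automorphism groups of convex cones of finite type: if such a $\Gamma$ acts on the rational closure of a cone and some rational polyhedral subset has $\Gamma$-translates covering that cone, then a rational polyhedral fundamental domain exists.

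The geometric content lies in producing the covering. For part (2) I would decompose $\Movp(X)$ into the chambers $w^*\Nef(X_w)$, where $w\colon X\dto X_w$ runs over the marked minimal models (small $\QQ$-factorial modifications) of $X$; running the minimal model program with scaling shows these chambers cover the movable cone, and two of them share a $\PsAut(X)$-orbit precisely when the underlying models are isomorphic compatibly with their markings. Part (1) concerns the single cone $\Nef(X)$ under the direct action of $\Aut(X)$; it is the basic building block, since part (1) for every minimal model $X_w$, combined with the finiteness of these models modulo pseudo-automorphisms, assembles into part (2).

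The hard part---and the reason the conjecture is open in general---is exactly this finiteness: one must show that $X$ admits only finitely many marked minimal models up to isomorphism, so that the chambers fall into finitely many $\Gamma$-orbits and the hypothesis of Looijenga's theorem is met. This is a deep finiteness statement about the minimal model program, accessible (via termination and the existence of good minimal models) in low dimension and under abundance-type hypotheses, but not known unconditionally. Once finiteness is in hand, the polyhedral conclusion and the disjointness clause are comparatively formal, so I expect essentially all of the difficulty to concentrate in this step.
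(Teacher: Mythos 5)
This statement is an open conjecture, not a theorem of the paper: it appears in a \verb|conjecture*| environment and the paper never claims a proof of it. What you have written is accordingly not a proof but a reduction sketch, and your own final paragraph concedes that the key inputs are ``not known unconditionally.'' That concession is correct, and it means the proposal cannot be evaluated as a successful proof of the statement.

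Two more specific points. First, your reduction of part (2) to ``part (1) for every minimal model plus finiteness of minimal models modulo pseudo-automorphisms'' is essentially the architecture of the paper's main result (Theorem \ref{main_thm-equiv}, implication $(3)\Rightarrow(1)$, proved via the chamber decomposition of Proposition \ref{prop_decompeffmov} and the assembling argument of Proposition \ref{prop_glue_chamber}), so you have correctly identified how the two halves of the conjecture relate --- but note that even this reduction requires MMP hypotheses (existence of (good) minimal models in the relevant dimension) that the paper states explicitly and that you leave implicit. Second, you locate ``essentially all of the difficulty'' in the finiteness of minimal models, describing part (1) as a ``basic building block.'' This misallocates the difficulty: part (1), the nef cone conjecture for a single Calabi--Yau variety, is itself wide open (already for general smooth Calabi--Yau threefolds), and you offer no argument for it whatsoever. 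Looijenga's machinery (Propositions \ref{pro_lo4.1} and \ref{pro-looij} in the paper) does convert a polyhedral covering into a rational polyhedral fundamental domain with the stated disjointness property, so that formal step is fine; but producing the covering of $\Nefe(X)$ by $\Aut(X)$-translates of a single rational polyhedral cone is exactly the unproven geometric content, and no amount of finiteness of minimal models supplies it.
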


\noindent Here, for a convex cone $\mathcal{C}$ in the N\'eron--Severi space $N^1(X)_{\RR}$,  we denote by
$\mathcal{C}^e$ the intersection $\cC\cap\Eff(X)$, and by
$\mathcal{C}^+$ the convex cone spanned by the classes of Cartier divisors in $\ol{\mathcal{C}}$. We also denote by $\PsAut(X)$ the group of pseudo-automorphisms of $X$, i.e., birational self-maps of $X$ which are isomorphisms in codimension $1$.

This conjecture has appeared in various forms, notably stated by Morrison, Kawamata, and Totaro in \cite{Mo93,Mo96,Ka97,To08,To10} (in order of increasing generality).  Albeit initially motivated by mirror symmetry, it has attracted much work from birational geometers over the past thirty years, see \cite{LOP18} for a survey.

The main purpose of our work is to understand the role of the cone of effective divisors $\Eff(X)\subset N^1(X)_{\RR}$ within this conjectural picture. The importance of the effective cone, or more generally of its subcones, was first noticed and exploited by \cite{LZ22}. Their main conjecture and theorem inspired us to introduce an effective cone conjecture for klt Calabi--Yau pairs and relate it to the movable cone conjecture.
Assuming the existence of good minimal models, we prove the equivalence of the effective and the movable cone conjectures, and establish relations to other versions of the cone conjecture.
This constitutes Theorem~\ref{main_thm-equiv}, the main result of this paper.

As an application in Theorem \ref{thm_mainSchoen}, we settle the movable cone conjecture for the smooth Calabi--Yau threefolds studied in \cite{Sc88,Na91,GM93}.

 \subsection{A chamber decomposition of the effective cone}\label{ssec-state-eff-dec}

 We first describe the birational geometric aspects of Calabi--Yau pairs encoded in the effective cone.
    In~\cite{Ka97}, Kawamata proved that for a Calabi--Yau threefold $X$,
    \begin{equation}\label{decomp-Move}
        \Move(X) = \bigcup_{\substack{\alpha: X\dto X' \\\mbox{\footnotesize{{\rm SQM}}}}}
    \alpha^*\Nefe(X');
    \end{equation} 
    where $\{\alpha\colon X\dto X'\}$ ranges through all small $\QQ$-factorial modifications (or SQM for short, see \S\ref{sssec-BC}) of $X$.
    Assuming the existence of minimal models for $X$ as stated in Definition~\ref{def_minimal_model}(3),
    this result generalizes to any variety $X$ which underlies a klt Calabi--Yau pair
    (see e.g. \cite[Theorem 3.5]{SX23}).

Under the same assumption, 
we prove that Decomposition~\eqref{decomp-Move} can be extended to a similar
chamber decomposition of the effective cone $\Eff(X)$.
To define the chambers,
we reintroduce a notion originating from the influential work \cite{HK00} on Mori dream spaces.  For any $\QQ$-factorial birational contraction (or QBC for short, see \S\ref{sssec-BC}) $f\colon X \dashrightarrow Y$, we define the {\it $f$-Mori chamber} to be
$$\Eff(X;f) \cnec 
    f^*\Nefe(Y)+\sum_{E\in \Exc(f)}\RR_{\ge 0}[E] \quad \subset \quad \Eff(X),$$
    where $\Exc(f)$ denotes the set of prime exceptional divisors of $f$.

  \begin{pro}\label{pro-EffCD}
    Let $(X,\gD)$ be a klt Calabi--Yau pair.
    Assume the existence of minimal models for $X$ as in Definition~\ref{def_minimal_model}(3).
    We have a chamber decomposition
        $$\Eff(X)=
        \bigcup_{\substack{f:X\dto Y\\ \mbox{\footnotesize{{\rm QBC  of} X}}}}\Eff(X;f).
        $$
    \end{pro}

An expanded statement for a decomposition  of the interior of the effective cone can be found in Proposition~\ref{prop_decompeffmov}.

\subsection{The effective cone conjecture, and other cone conjectures}\label{ssec-state-eff-cc} 

Let $X$ be a normal $\QQ$-factorial projective variety. For a subgroup $G$ of $\PsAut(X)$, we denote by $G^{\ast}$ its image under the natural action 
by pullback (see Subsection~\ref{subssec-pfwd})
$$g\in \PsAut(X) \mapsto g^*\in \GL\left( N^1(X)_{\RR} \right).$$

For a pair $(X,\Delta)$, we follow Totaro \cite{To10} and introduce the groups 

\begin{align*}
 \Aut(X,\Delta) & = \Set{ g \in \Aut(X) : g(\Supp \Delta) = \Supp \Delta}  \text{ and }  \\
 \PsAut(X,\Delta) &= \Set{ g \in \PsAut(X) : g(\Supp \Delta) = \Supp \Delta}.
\end{align*}

\begin{definition}\label{def-cone-conjs}
    We say that the {\it movable cone conjecture}, the {\it effective cone conjecture}, the {\it nef cone conjecture}, respectively the {\it $f$-Mori chamber cone conjecture},  holds for the pair $(X,\Delta)$ (with its $\QQ$-factorial birational contraction $f:(X,\Delta)\dto (Y,\Delta_Y)$) if there exists a rational polyhedral fundamental domain for the action 
    \begin{enumerate}
        \item (Movable cone conjecture) $\PsAut^{\ast}(X,\Delta) \acts \Move(X)$
        \item (Effective cone conjecture) $\PsAut^{\ast}(X,\Delta) \acts \Eff(X)$, 
        \item (Nef cone conjecture) $\Aut^{\ast}(X,\Delta) \acts \Nefe(X)$, 
        \item ($f$-Mori chamber cone conjecture) $\PsAut^{\ast}(X,\Delta;f) \acts \Eff(X;f)$.
    \end{enumerate}

\noindent  Here, the group $\PsAut^*(X,\Delta;f)$ is the stabilizer of the $f$-Mori chamber $\Eff(X;f)$ in the group $\PsAut^*(X,\Delta)$; see Subsection \ref{ssec-stab-mori} for an alternative definition. 
\end{definition}
The \textit{Kawamata--Morrison--Totaro cone conjecture} (see \cite{To10}) predicts that (1) and (3) in Definition \ref{def-cone-conjs} hold for klt Calabi--Yau pairs. Note that this fails for {\it log canonical} Calabi--Yau pairs; see~\cite{To10}. We propose a related conjecture.

\begin{conj}\label{conj-eff}
    Let $(X,\gD)$ be a klt Calabi--Yau pair.
    The effective cone conjecture, as stated in Definition \ref{def-cone-conjs}(2), holds for $(X,\gD)$.
\end{conj}

We can now state our main result.

\begin{theorem}\label{main_thm-equiv}
Let $(X,\gD)$ be a klt Calabi--Yau pair. Assume that good minimal models exist in dimension $\dim(X)$, as stated in Definition~\ref{def_minimal_model}(4). Then the statements (1), (2), (3), and (4) below are equivalent: 
        \begin{enumerate}
            \item The movable cone conjecture holds for $(X,\gD)$.
            \item The effective cone conjecture holds for $(X,\gD)$. 
            \item 
            \begin{enumerate}
            \item The nef cone conjecture holds for each klt pair $(X',\gD')$ obtained by a small $\QQ$-factorial modification from $(X,\gD)$.
            \item There are only finitely many $(X', \Delta')$ up to isomorphism of pairs arising as small $\QQ$-factorial modifications of $(X,\Delta)$.
            \end{enumerate}
            \item 
            \begin{enumerate}
            \item The Mori chamber cone conjecture holds for each 
            $\QQ$-factorial birational contraction $f\colon (X,\gD)\dto (Y,\gD_Y)$.
            \item There are only finitely many $(Y,\Delta_Y)$ up to isomorphism of pairs arising as $\QQ$-factorial birational contractions of $(X,\Delta)$.
            \end{enumerate}
        \end{enumerate} 
    \end{theorem} 

A more detailed and general formulation appears later as Theorem~\ref{thm_equivalence_detailed} and highlights where the existence of good minimal models is necessary.

 \subsection{A descent result for the nef cone conjecture}\label{ssec-state-descent-nef}
    In  proving the implication $(3) \Rightarrow (4)$ in Theorem~\ref{main_thm-equiv},
    we show a {\it birational descent} result
    for the nef cone conjecture which is a generalization of \cite[Lemma 3.4]{To10} to higher dimension.
    \begin{proposition}\label{prop_descentIntro}
    Let $(X,\Delta)$ be a klt Calabi--Yau pair.
    Assume that the nef cone conjecture holds for $(X,\Delta)$. 
    Then, for any birational morphism $f\colon(X,\Delta)\to (Y,\Delta_Y)$ 
    with $Y$ being $\QQ$-factorial, 
    the nef cone conjecture holds for $(Y,\Delta_Y)$.
    \end{proposition}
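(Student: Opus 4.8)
The plan is to realize $\Nefe(Y)$ as a rational face of $\Nefe(X)$ via pullback, to match the relevant automorphism groups up to finite index, and then to transport the fundamental domain provided by the cone conjecture for $(X,\Delta)$. First I would record that $f$ is crepant: since $K_X+\Delta\equiv 0$ and $K_Y+\Delta_Y=f_*(K_X+\Delta)\equiv 0$, the identity $K_X+\Delta=f^*(K_Y+\Delta_Y)+\sum_i a_i E_i$ with $E_i\in\Exc(f)$ forces $\sum_i a_i E_i\equiv 0$; as the classes $[E_i]$ are linearly independent modulo $f^*N^1(Y)_{\RR}$, all $a_i=0$. Hence $(Y,\Delta_Y)$ is again a klt Calabi--Yau pair and $f^*$ embeds $N^1(Y)_{\RR}$ as a rational subspace of $N^1(X)_{\RR}$. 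Using the projection formula (so that $\Eff(X)\cap f^*N^1(Y)_{\RR}=f^*\Eff(Y)$) together with $f^*\Nef(Y)=\Nef(X)\cap f^*N^1(Y)_{\RR}$, one checks that $F\cnec f^*\Nefe(Y)=\Nefe(X)\cap f^*N^1(Y)_{\RR}$ is a rational face of $\Nefe(X)$, and that $f^*$ intertwines the action of $g_Y\in\Aut^*(Y,\Delta_Y)$ on $\Nefe(Y)$ with the action on $F$ of any birational lift $g_X=f^{-1}\circ g_Y\circ f$.

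The heart of the argument is a comparison of groups. Let $\Gamma=\Aut^*(X,\Delta)$ and let $\Gamma_F\le\Gamma$ be the stabilizer of the face $F$. Each $g\in\Gamma_F$ preserves the set of $f$-contracted curve classes, hence descends through the contraction $f$ to an element of $\Aut^*(Y,\Delta_Y)$; this gives a homomorphism $\Gamma_F\to\Aut^*(Y,\Delta_Y)$ whose kernel (automorphisms over $Y$) acts trivially on $F$. For the reverse inclusion I would use the finiteness of the set $\mathcal E$ of divisorial valuations of discrepancy $\le 0$ over the klt pair $(Y,\Delta_Y)$: the group $\Aut^*(Y,\Delta_Y)$ permutes the finite set $\mathcal E$, so the subgroup $A_0$ fixing each of its elements has finite index. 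For $g_Y\in A_0$, the lift $g_X$ fixes every $f$-exceptional divisor $E_i$ (these lie in $\mathcal E$ by crepancy), hence is a pseudo-automorphism acting trivially on the relative N\'eron--Severi space $N^1(X/Y)_{\RR}=\langle[E_i]\rangle$. By the relative cone decomposition over $Y$ (relative minimal models over the birational morphism $f$ exist by relative BCHM), $g_X$ fixes the relative nef chamber $\Nef(X/Y)$ and therefore induces no flop over $Y$, so $g_X$ is a genuine automorphism of $X$; one also checks that $g_X$ preserves $\Supp\Delta$, since its $f$-exceptional components lie in $\mathcal E$. Thus $A_0$ lifts into $\Gamma_F$, and the image of $\Gamma_F\to\Aut^*(Y,\Delta_Y)$ has finite index.

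Finally I would transport the fundamental domain. By the face-restriction property of the cone conjecture --- the general fact, in the theory of fundamental domains for convex cones, that a rational polyhedral fundamental domain for $\Gamma\acts\Nefe(X)$ induces one for the stabilizer $\Gamma_F\acts F$ of a rational face --- the nef cone conjecture for $(X,\Delta)$ yields a rational polyhedral fundamental domain for $\Gamma_F\acts F\cong\Nefe(Y)$. Since the kernel of $\Gamma_F\to\Aut^*(Y,\Delta_Y)$ acts trivially on $F$, this is a fundamental domain for the finite-index image, and a finite-index overgroup inherits a rational polyhedral fundamental domain; hence one exists for $\Aut^*(Y,\Delta_Y)\acts\Nefe(Y)$, proving the nef cone conjecture for $(Y,\Delta_Y)$.

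I expect the main obstacle to be the group comparison, and within it the upgrade of the naive birational lift $f^{-1}\circ g_Y\circ f$ to a \emph{regular} automorphism: a priori it is only a pseudo-automorphism, and ruling out flops over $Y$ is precisely what forces the combined use of the finiteness of low-discrepancy valuations and the relative chamber decomposition. The face-restriction step is the other delicate point, as it relies on the Looijenga-type machinery rather than a naive intersection of the fundamental domain with $F$.
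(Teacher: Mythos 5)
Your overall strategy --- realize $f^*\Nefe(Y)$ as a rational face of the nef cone of $X$, identify its stabilizer with the automorphisms descending to $Y$, and invoke a Looijenga-type face-restriction of fundamental domains --- is the paper's strategy (Corollary \ref{cor_equality_groups}, Proposition \ref{pro_descent_face}, Proposition \ref{pro-looij}). However, the entire second paragraph, which you call the heart of the argument, is unnecessary. Looijenga's equivalence between ``polyhedral type'' and ``existence of a rational polyhedral fundamental domain'' (Proposition \ref{pro-looij}) means that once \emph{some} subgroup of $\Aut(Y,\Delta_Y)$ --- here the image of $\Gamma_F$, namely $\Aut(Y,\Delta_Y;f)$ --- covers the ample cone of $Y$ by translates of a rational polyhedral cone, the full group does so a fortiori, and Proposition \ref{pro-looij} then produces a fundamental domain for the full group. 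No finite-index comparison is needed, so the finiteness of low-discrepancy valuations, the relative cone decomposition over $Y$, and the delicate upgrade of $f^{-1}\circ g_Y\circ f$ from pseudo-automorphism to automorphism can all be discarded; your own closing sentence already contains the observation that makes them redundant, since an overgroup inherits the covering property from a subgroup regardless of index. (As written, moreover, the detour is not watertight: the step ``fixes the relative nef chamber, therefore is a genuine automorphism'' needs a relative, $g_Y$-twisted version of Lemma \ref{lem_hk1.7} that you do not supply.)

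The genuine gap is elsewhere, in the bookkeeping between $\Nefe$ and $\Nefp$. The face-restriction machinery operates on the cones $C^+$: applying Proposition \ref{pro_descent_face} to the face $f^*\Nefp(Y)$ of $\Nefp(X)$ and then Proposition \ref{pro-looij} yields a rational polyhedral fundamental domain for the action on $\Nefp(Y)$, not on $\Nefe(Y)$. To obtain the nef cone conjecture for $(Y,\Delta_Y)$ as defined --- an action on $\Nefe(Y)$ --- one must prove $\Nefp(Y)=\Nefe(Y)$. The inclusion $\Nefe(Y)\subset\Nefp(Y)$ is the Shokurov-polytope statement (Proposition \ref{prop_lz5.1}), but the reverse inclusion $\Nefp(Y)\subset\Eff(Y)$ is not automatic; the paper deduces it from the hypothesis on $X$ via $f^*\Nefp(Y)\subset\Nefp(X)=\Nefe(X)\subset\Eff(X)$ followed by the negativity lemma to push effectivity down to $Y$. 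Your write-up silently identifies $F=f^*\Nefe(Y)$ with the cone to which Looijenga's machinery applies and never verifies this equality; it is precisely the point that your projection-formula parenthesis does not cover.
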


We will give a slightly more general 
result in Proposition~\ref{prop_descent}.

\subsection{Contextualizing Theorem~\ref{main_thm-equiv}
and relation to other works}\label{ssec-remknwon}

All the numbers in this subsection 
correspond to the statements in Theorem~\ref{main_thm-equiv}.

The implication $(1)\Rightarrow (3)$
was asked by
Oguiso in~\cite[Question 2.30(2)]{Og22}. 
While  $(1) \Rightarrow \text{(3b)}$ is proven 
in \cite[Theorem 2.14]{CL14} and \cite[Proposition 5.3]{LZ22} under the assumption of the existence of good minimal models, 
the implication $(1)\Rightarrow \text{(3a)}$ was still open when we started to work on Theorem \ref{main_thm-equiv}. Shortly before the current version of this paper was ready to be made public, the preprint~\cite{xu2024cone} by F. Xu appeared on the arXiv.
Independently, the author proves the implication 
(1) $\Rightarrow \text{(3a)}$
under the assumption of the existence of minimal models, and a non-vanishing assumption (see~\cite[Theorem 14]{xu2024cone}).

Our implication [(1) or (2)] $\Rightarrow$ (3a)
is also related to~\cite[Conjecture 1.2(2), Theorem 1.3(3)]{LZ22}
in the work by Z. Li and H. Zhao. 
We also refer the reader to Statement $(2')$ in Theorem \ref{thm_equivalence_detailed}.

The implication $(2) \Rightarrow (1)$ under the assumption of the existence of good minimal models is an immediate consequence of \cite[Theorem 1.3(1)]{LZ22}.

Finally in the same preprint~\cite{xu2024cone}, 
F. Xu proves a reduction result for the movable cone conjecture in~\cite[Theorem 2]{xu2024cone}, of which one direction is the birational descent for the movable cone conjecture. 
It is also independent from our descent result for the nef cone conjecture (Proposition \ref{prop_descentIntro}).

\subsection{Cone conjectures for the Calabi--Yau threefolds introduced by Schoen}\label{ssec-state-application-schoen}

The equivalence of the statements in Theorem \ref{main_thm-equiv} offers some flexibility to study one of the cone conjectures, and derive results for the others.
For instance, if a klt Calabi--Yau pair $(X,\Delta)$ of dimension 3 satisfies the effective cone conjecture, then by Theorem \ref{main_thm-equiv} (and the existence of good minimal models in dimension 3), it also satisfies the nef and movable cone conjectures.
This principle works well for the following class of smooth Calabi--Yau threefolds $X$ introduced by Schoen in~\cite{Sc88}.

    \begin{thm}\label{thm_mainSchoen} 
    Let $X$ be a smooth 
    Calabi--Yau threefold obtained as a fiber product 
    $W_1\times_{\PP^1}W_2$, where $\phi_i\colon W_i\to\PP^1$ is a relatively minimal rational elliptic surface with a section for $i=1,2$. 
    Assume moreover that the generic fibers of $\phi_1$ and $\phi_2$ are non-isogenous.
    Then all statements of Theorem \ref{main_thm-equiv} hold for $X$.
	\end{thm}  

Together with \cite{Na91}, Theorem \ref{thm_mainSchoen} provides the following description of the nef cones of minimal models of $X$.

 \begin{corollary}\label{cor_ratpol}
Keep the notation as in Theorem \ref{thm_mainSchoen}. Assume further that all singular fibers of $\phi_1$ and $\phi_2$ are irreducible nodal rational curves.
Then, every minimal model $X'$ of $X$ which is not isomorphic to $X$ has a rational polyhedral nef cone.
 \end{corollary}

 The nef cone conjecture was established for such $X$ by Grassi--Morrison in \cite{GM93}. 
 The finiteness of minimal models was proven by Namikawa in \cite{Na91}, who even provided an explicit count of the (over $5\times 10^{19}$) minimal models. To our knowledge, the movable cone conjecture for $X$ and the nef cone conjecture for its other minimal models are both new results.

   \subsection*{Organization of the paper}  
   Section \ref{sec_preliminaries} reviews preliminaries. In Section \ref{sec-convgeom}, we prove preparatory results in convex geometry in the spirit of Looijenga \cite{Lo14}. In Section \ref{sec-chamber-decs}, we introduce Mori chambers, recall results on the geography of models, and prove various decompositions of cones. Section \ref{sec-descent} proves our descent result for the nef cone conjecture. Section \ref{sec-PfThMain} focuses on the proof of Theorem \ref{thm_equivalence_detailed}, which is a detailed version of Theorem \ref{main_thm-equiv}. Finally, Section \ref{sec-application} is devoted to our results on the smooth Calabi--Yau threefolds introduced by Schoen.

   \subsection*{Acknowledgments} We would like to thank Chen Jiang and Vladimir Lazi\'c for valuable discussions on the minimal model program, and for suggesting some relevant references. We also thank Keiji Oguiso for insightful questions and helpful discussions, and Jungkai Chen, Gavril Farkas, Andreas H\"oring, Ching-Jui Lai, Gianluca Pacienza, Zhixin Xie and Xun Yu for various comments and encouragements. We are grateful to the referees for constructive suggestions and pointing out several mistakes.

   We acknowledge the following support. Gachet was partly supported by the ERC Advanced Grant SYZYGY. This project has received funding from the European Research Council (ERC) under the European Union Horizon 2020 research and innovation program (grant agreement No. 834172). Lin is supported by 
   the Yushan Fellow Program by the Ministry of Education (NTU-114V1006-5), the National Science and Technology Council (114-2628-M-002-012-, 114-2639-M-002-009-ASP) in Taiwan,
   and the Asian Young Scientist Fellowship. Wang is supported by the National Key Research and Development Program of China (No. 2023YFA1010600), the National Natural Science Foundation of China (No. 12401052), the NSFC for Innovative Research Groups (No. 12121001), and the Postdoctoral Fellowship Program of CPSF (No. GZC20230535).

    \section{Notation and preliminaries}\label{sec_preliminaries} 
    
    We work over the field $\CC$ of complex numbers. 
    We refer to ~\cite{KM98} for standard results of birational geometry, and to \cite{Fu17} for results and definitions regarding $\RR$-divisors more specifically.
    
    In this paper, a \textit{pair} is the data $(X,\Delta)$ of a normal $\QQ$-factorial projective variety\footnote{We add $\QQ$-factoriality for convenience.} $X$ and of an effective $\RR$-divisor $\Delta$ on $X$. 
    We call a pair $(X,\Delta)$ \textit{Calabi--Yau} if the $\RR$-divisor $K_X+\Delta$ is numerically trivial, following \cite{To10}. Many pairs considered in this paper are \textit{klt}; for a definition, see \cite[Definition 2.3.4]{Fu17}.
   
    \subsection{Cones of numerical classes of divisors.}\label{ssec-Defcones}
    Let $X$ be a normal projective variety. We write $N^1(X)$ for the free abelian group generated by the classes of Cartier divisors modulo numerical equivalence.
    Inside the vector space $N^1(X)_{\RR}:=N^1(X)\otimes \RR$, we denote by $\Nef(X)$ the \textit{nef cone}, i.e., the closure of the ample cone $\Amp(X)$, and by $\Eff(X)$ the \textit{effective cone}, that is, the cone generated by the numerical classes of effective Cartier divisors in $N^1(X)_{\RR}$. The closure and the interior of $\Eff(X)$ are called the {\it pseudo-effective cone} $\Pseff(X)$ and the {\it big cone} $\Bigc(X)$ respectively; 
    note that they need not equal $\Eff(X)$ in general.
    The \textit{nef effective cone} $\Nefe(X)$ is defined as
    \[ \Nefe(X) := \Nef(X)\cap \Eff(X). \] 
      
    A Cartier divisor $D$ on a projective variety $X$ is called \textit{movable} if there is a positive integer $m$ such that $mD$ is effective and the base locus of the linear system $|\cO_X(mD)|$ does not contain any divisor. We denote by $\mathrm{Mov}(X)$ the convex cone in $N^1(X)_{\RR}$ generated by the numerical classes of movable Cartier divisors. 
    In general, the cone $\mathrm{Mov}(X)$ is neither open nor closed. The {\it closed movable cone} $\Mov(X)$ and the {\it open movable cone} $\Movo(X)$ are the closure, respectively, the interior of $\mathrm{Mov}(X)$. The \textit{movable effective cone} $\Move(X)$ is defined as
    \[ \Move(X) := \Mov(X)\cap \Eff(X). \]

 We have the following inclusions of cones

\begin{center}
$\Nef(X) \subset \Mov(X) \subset \Pseff(X)$\\
$\Amp(X) \subset \Movo(X) \subset \Bigc(X)$.
\end{center}
In particular, each of the cones described here is strictly convex (see Subsection \ref{ssec-notation} for definition) since $\Pseff(X)$ is.
Note that if $X$ is a surface, we have
\begin{center}
$\Nef(X) = \Mov(X)$,\quad $\Amp(X) = \Movo(X)$.
\end{center}
    
    \subsection{Some group actions on cones of divisors}

\subsubsection{Birational contractions and pseudo-automorphisms}\label{sssec-BC}
Let $X$ be a normal $\QQ$-factorial projective variety.
We define some important types of birational maps. Following \cite{HK00}, we call a birational map $f\colon X \dashrightarrow Y$ to a normal projective variety $Y$ a {\it birational contraction}, if $f^{-1}$ contracts no divisor. If in addition, $Y$ is $\QQ$-factorial, we call $f$ {\it a $\QQ$-factorial birational contraction} (or {\it QBC} for short). We refer to the data of $(Y, f)$ as a {\it marked} QBC from $X$, with {\it marking} $f$.  We consider two marked QBCs $(Y_1,f_1)$ and $(Y_2,f_2)$ from $X$ to be {\it isomorphic} and write $(Y_1,f_1)\simeq (Y_2,f_2)$, if the birational map $f_2\circ f_1^{-1}$ is an isomorphism.

Following \cite[Definition 1.8]{HK00}, we define a \textit{small $\QQ$-factorial modification} (or {\it SQM} for short) of $X$ as a $\QQ$-factorial birational contraction $\alpha\colon X \dashrightarrow X'$ which contracts no divisor.
The birational maps from $X$ to itself which are
isomorphisms in codimension one are called the \textit{pseudo-automorphisms} 
of $X$, and they form a group denoted by $\PsAut(X)$. Regarding SQMs as special cases of QBCs, we obtain the notion of marked  SQMs and isomorphisms of marked SQMs.

The following example shows that there can be infinitely many marked small $\QQ$-factorial modifications, but only finitely many isomorphism classes of targets. 

\begin{example}
Let $X$ be a very general hypersurface of multidegree $(2, \dots, 2)$ in $(\PP^1)^{n+1}$ with $n \geq 3$. Then $X$ is a simply connected smooth Calabi--Yau manifold of dimension $n$ (see, e.g., \cite[Theorem 3.1]{CO15}). Every birational self-map $f$ of $X$ is a pseudo-automorphism, and gives a small $\QQ$-factorial modification $(X, f\colon X \dashrightarrow X)$. 
By \cite[Theorem 3.3]{CO15}, the automorphism group $\Aut(X)$ is trivial, while the pseudo-automorphism group $\PsAut(X)$ is infinite. Moreover, by the proof of \cite[Theorem 3.3(4)]{CO15}, for every small $\QQ$-factorial modification $\alpha \colon X \dashrightarrow X'$, we have $X' \cong X$. So there are infinitely many marked small $\QQ$-factorial modifications $(X, f)$, but they all have the same target variety $X$.  
\end{example}

\subsubsection{Pushforwards and pullbacks}\label{subssec-pfwd}
We define notions of pushforwards and pullbacks for real divisor classes under birational maps. Let $f\colon X\dashrightarrow X'$ be a birational map between $\QQ$-factorial varieties.

We start by defining a pullback by $f$. We resolve

\begin{center}
\xymatrixrowsep{12pt}
\xymatrixcolsep{12pt}

\begin{minipage}{0.9\textwidth}
\xymatrix{
& W \ar[ld]_{p} \ar[rd]^{q} &\\
X \ar@{-->}[rr]_{f} & & X'
}
\end{minipage}
\end{center}

\noindent with $W$ a normal $\QQ$-factorial projective variety, and $p,q$ birational morphisms. This yields a pullback group homomorphism 
$$f^{*}:=p_*q^*\colon N^1(X')_{\QQ}\to N^1(X)_{\QQ}.$$ 
The morphism $f^*$ is independent of the choice of the resolution $(W,p,q)$. 

 From now on, we assume that $f$ is a birational contraction.

We can then define a pushforward: We have a group homomorphism between the groups of codimension-one cycles
$$f_*:Z^1(X)\to Z^1(X').$$
As a consequence of the negativity lemma \cite[Lemma 3.39]{KM98} the kernel of the pushforward $f_*$ (at the level of $\QQ$-divisors) is spanned by the set of prime exceptional divisors $\Exc(f)$.

The following simple, yet important fact, relates pushforwards and pullbacks. The pushforward $f_*$ preserves numerical equivalence, thus descends to
$$f_* : N^1(X)_{\QQ}\to N^1(X')_{\QQ},$$
which coincides with $(f^{-1})^*$. Moreover, if $(W,p,q)$ is a resolution of $f$ as above, we have
    $$f_*f^*= (f^{-1})^*(f)^* = q_*p^*p_*q^* = \id,$$ and therefore $f^*$ is injective and $f_*$ is surjective.
    
We conclude \S\ref{subssec-pfwd} with the decomposition
$$N^1(X)_{\RR} = f^*N^1(X')_{\RR}\oplus {\rm Span}_{\RR}({\rm Exc}(f)),$$
which immediately follows from the various facts above.

\subsubsection{Pullbacks, pushforwards, and composition.}

There are instances of pullback functoriality, such as the following lemma. 

\begin{lemma}\label{lem_funpull}
Let $f \colon X \dto Y$ and $g \colon Y \dto Z$ be two birational maps
between normal $\QQ$-factorial projective varieties.
Assume that $f^{-1}$ is a birational contraction.
Then
$$f^*g^* = (gf)^* \colon N^1(Z)_\QQ \to N^1(X)_\QQ.$$ 
\end{lemma}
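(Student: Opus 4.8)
The plan is to compute all three pullbacks on a single common resolution and then isolate the one contribution that the hypothesis on $f$ forces to vanish. First I would choose a normal $\QQ$-factorial projective variety $W$ together with birational morphisms $a\colon W\to X$, $b\colon W\to Y$, $c\colon W\to Z$ such that $f = b\circ a^{-1}$, $g = c\circ b^{-1}$, and $gf = c\circ a^{-1}$ as birational maps; such a $W$ is obtained by resolving the closure of the graph of $(\id_X, f, gf)$ in $X\times Y\times Z$, the three projections being birational morphisms. Since the pullback of a birational map is independent of the chosen resolution, the triple $(W, a, b)$ computes $f^* = a_* b^*$, the triple $(W, b, c)$ computes $g^* = b_* c^*$, and $(W, a, c)$ computes $(gf)^* = a_* c^*$. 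Hence the identity to prove becomes $a_* b^* b_* c^* = a_* c^*$ on $\mathrm{Cl}(Z)_\QQ$, and it suffices to show that $a_*\bigl(b^* b_* \xi - \xi\bigr) = 0$ for every class $\xi \in \mathrm{Cl}(W)_\QQ$.

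Next I would record the elementary structural fact that $b^* b_*$ is the projector onto $b^* \mathrm{Cl}(Y)_\QQ$ associated with the decomposition $\mathrm{Cl}(W)_\QQ = b^* \mathrm{Cl}(Y)_\QQ \oplus \Span_\QQ(\Exc(b))$, which is the class-group analogue of the decomposition recorded at the end of \S\ref{subssec-pfwd} and follows from $b_* b^* = \id$ together with the negativity lemma \cite[Lemma 3.39]{KM98}. Indeed $b^* b_*$ acts as the identity on $b^* \mathrm{Cl}(Y)_\QQ$ and annihilates $\Span_\QQ(\Exc(b))$, so for every $\xi$ the difference $b^* b_* \xi - \xi$ is supported on the $b$-exceptional prime divisors, i.e. lies in $\Span_\QQ(\Exc(b))$.

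The crux is then to show that every $b$-exceptional prime divisor is $a$-exceptional, for this is exactly where the hypothesis that $f^{-1}$ is a birational contraction---equivalently, that $f$ contracts no divisor---enters. Suppose $P\subset W$ is $b$-exceptional but not $a$-exceptional. Then $D := a(P)$ is a prime divisor on $X$ and $P$ is its strict transform, so the image $f(D)$ coincides generically with $b(P)$, which has codimension at least $2$ in $Y$ because $P$ is $b$-exceptional; thus $f$ contracts the divisor $D$, contradicting the hypothesis. Therefore $\Span_\QQ(\Exc(b)) \subseteq \Span_\QQ(\Exc(a)) \subseteq \ker(a_*)$, the last inclusion holding because $a_*$ annihilates every $a$-exceptional divisor. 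Combining this with the previous paragraph yields $a_*\bigl(b^* b_* \xi - \xi\bigr) = 0$, whence $f^* g^* = (gf)^*$.

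I expect the genuine content to be concentrated in the third paragraph: the first two steps are formal manipulations with common resolutions and the standard exceptional-divisor decomposition, while the comparison of $\Exc(b)$ with $\Exc(a)$ is the one place where the asymmetric hypothesis on $f$ is used, and some care is needed to phrase the strict-transform argument correctly---in particular, to verify that a non-$a$-exceptional $P$ really does exhibit $f$ contracting a divisor.
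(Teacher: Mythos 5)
Your proof is correct and follows essentially the same route as the paper: both pass to a common resolution $W$ of $f$ and $g$, observe that $\mathrm{id}-b^*b_*$ projects onto $\ker(b_*)=\Span_\QQ(\Exc(b))$, and use the hypothesis on $f$ to conclude $\Exc(b)\subset\Exc(a)$ so that $a_*$ kills the discrepancy. The only difference is cosmetic: the paper simply asserts $\Exc(q)\subset\Exc(p)$ as part of the choice of resolution, whereas you spell out the strict-transform argument justifying it.
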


\begin{proof}
    Choose a common resolution of $f$ and $g$
    $$
    \xymatrix{
     & W \ar[dl]_p \ar[d]_q \ar[dr]^r &   \\
     X \ar@{-->}[r]_{f} & Y \ar@{-->}[r]_{g} & Z   
    }
    $$
   where $W$ is a normal $\QQ$-factorial projective variety, and $p,q,r$ are birational morphisms with $\Exc(q)\subset\Exc(p)$.
   Note that the image of $\mathrm{id}_{N^1(W)_{\QQ}}-q^*q_*$ is contained in $\ker(q_*)$. Since $p_*\ker(q_*)=0$ by assumption, we have
\[ f^*g^* = p_*q^*q_*r^*=p_*r^* = (gf)^*. \qedhere 
\]
\end{proof}

The following corollary shows an instance of pushforward functoriality.

\begin{corollary}\label{cor_funpushf}
Let $f \colon X \dto Y$ and $g \colon Y \dto Z$ be two birational contractions
between normal $\QQ$-factorial projective varieties.
Then
$$ (gf)_* = g_*f_* \colon N^1(X)_\QQ \to N^1(Z)_\QQ.$$ 
\end{corollary}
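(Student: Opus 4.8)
The plan is to deduce pushforward functoriality directly from the pullback functoriality of Lemma~\ref{lem_funpull}, using the fact established in \S\ref{subssec-pfwd} that pushforward by a $\QQ$-factorial birational contraction coincides with the pullback of the inverse map. First I would record the hypotheses: since $f\colon X\dto Y$ and $g\colon Y\dto Z$ are birational contractions, their inverses $f^{-1}\colon Y\dto X$ and $g^{-1}\colon Z\dto Y$ contract no divisor, i.e.\ $(f^{-1})^{-1}=f$ and $(g^{-1})^{-1}=g$ are birational contractions. Equivalently, $f^{-1}$ and $g^{-1}$ are themselves $\QQ$-factorial birational contractions whose \emph{inverses} are birational contractions, which is precisely the shape of hypothesis needed to invoke Lemma~\ref{lem_funpull}.

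The key computation is then short. By the identity $f_*=(f^{-1})^*$ recorded in \S\ref{subssec-pfwd} (valid for any $\QQ$-factorial birational contraction), we may rewrite everything in terms of pullbacks:
\begin{equation*}
(gf)_* = \bigl((gf)^{-1}\bigr)^{*} = (f^{-1}g^{-1})^{*}.
\end{equation*}
Now I would apply Lemma~\ref{lem_funpull} to the pair of birational maps $g^{-1}\colon Z\dto Y$ and $f^{-1}\colon Y\dto X$: its hypothesis requires that the inverse of the \emph{first} map be a birational contraction, and indeed $(g^{-1})^{-1}=g$ is a birational contraction by assumption. The lemma therefore gives $(f^{-1}g^{-1})^{*}=(g^{-1})^{*}(f^{-1})^{*}$. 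Translating back via $g_*=(g^{-1})^{*}$ and $f_*=(f^{-1})^{*}$ yields
\begin{equation*}
(gf)_* = (g^{-1})^{*}(f^{-1})^{*} = g_* f_*,
\end{equation*}
which is the desired equality of maps $\mathrm{Cl}(X)_\QQ \to \mathrm{Cl}(Z)_\QQ$.

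The only point demanding care—and what I expect to be the main (minor) obstacle—is verifying that the composite $gf$ is itself a birational contraction, so that $(gf)_*$ and the identity $(gf)_*=((gf)^{-1})^*$ are legitimately defined in the first place, and that the variables in Lemma~\ref{lem_funpull} are matched correctly (the lemma's map named $f$ plays the role of $g^{-1}$ here, so one must check the hypothesis against the right map). That $gf$ is a birational contraction follows since a composition of birational contractions contracts no divisor under the inverse: $(gf)^{-1}=f^{-1}g^{-1}$ contracts no divisor because each of $f^{-1},g^{-1}$ contracts none and pushforwards of non-contracted divisors remain non-contracted. Once these bookkeeping checks are in place, the result is immediate from the lemma, with no further geometric input required.
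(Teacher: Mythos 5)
Your proposal is correct and is essentially the same argument as the paper's: apply Lemma~\ref{lem_funpull} to the pair $g^{-1}\colon Z\dto Y$, $f^{-1}\colon Y\dto X$ (whose hypothesis is satisfied because $(g^{-1})^{-1}=g$ is a birational contraction) to get $[(gf)^{-1}]^*=(g^{-1})^*(f^{-1})^*$, and then identify inverse pullbacks with pushforwards. The extra bookkeeping you flag --- that $gf$ is itself a birational contraction so that $(gf)_*=((gf)^{-1})^*$ makes sense --- is a correct and worthwhile check that the paper leaves implicit.
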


\begin{proof}
    Apply Lemma \ref{lem_funpull} to obtain that $[(gf)^{-1}]^*=(g^{-1})^*(f^{-1})^*$, and use that both $f$ and $g$ are birational contractions to identify their inverse pullback with their pushforward.
\end{proof}

\subsubsection{The $\PsAut(X)$-action on $N^1(X)_\RR$} \label{sssec-SQM}

Throughout \S\ref{sssec-SQM}, we denote by $X$ a normal $\QQ$-factorial projective variety.

For any $\QQ$-factorial birational contraction $f\colon X\dto X'$, it holds
$$f^*N^1(X')_{\QQ} \subset N^1(X)_{\QQ},
 \ \ \ f^*\Eff(X')\subset\Eff(X), \ \ \ f_*\Mov(X)\subset\Mov(X'),
 $$
with equality if and only if $f$ is a small $\QQ$-factorial modification.
In this case, we further have
$$f^*\Mov(X') = (f^{-1})_*\Mov(X') = \Mov(X).$$

In particular, pullback defines a linear right group action of $\PsAut(X)$ on $N^1(X)_{\RR}$.
It preserves the lattice of Weil divisor classes $N^1_W(X)$. It also preserves the cones $\Mov(X)$ and $\Eff(X)$. The induced action of the automorphism group $\Aut(X)$ additionally preserves the cone $\Nef(X)$.

\subsection{Some standard conjectures of the minimal model program}\label{ssec-standardMMPconj}
Part of the main results of this paper is proven under the assumption of \textit{the existence of good minimal models} in dimension $n$. In this subsection, we recall what this assumption actually means, and provide a reference for the fact that it is satisfied for $n\le 3$. Note that it is most crucial for the following definition that we allow $\RR$-divisors.

 \begin{definition}\label{def_minimal_model} We define the following notions:
    \begin{itemize} 
    	\item[(1)] Following \cite[Definition 3.50]{KM98}), we define a \textit{minimal model} of a klt pair $(X, \Delta)$ as the data of a klt pair $(W, \Delta_W)$ and a birational contraction $\phi\colon(X,\Delta)\dashrightarrow (W,\Delta_W)$ such that 
    	\begin{itemize}
            \item[(i)]  $\phi_*\Delta = \Delta_W$ as effective $\RR$-divisors;
            \item[(ii)] the $\RR$-divisor $K_W + \Delta_W$ is nef;
    	\item[(ii)] for any prime effective Weil divisor $E\subset X$ that is contracted by $\phi$, we have the inequality of discrepancies $a(E, X, \Delta) < a(E, W, \Delta_W)$. (See \cite[Lemma 2.3.2]{Fu17} for a definition.)
    	\end{itemize}
     \item[(2)] A \textit{good minimal model} of a klt pair $(X, \Delta)$ is a minimal model $(W,\Delta_W)$ such that the $\RR$-divisor $K_W + \Delta_W$ is semiample in the sense of \cite[Definition 2.1.20]{Fu17}.
    \item[(3)] Let $X$ be a normal $\QQ$-factorial projective variety. We say that we have \textit{the existence of minimal models for $X$}, respectively \textit{the existence of good minimal models for $X$}, if for any $\RR$-divisor $\Delta$ such that the pair $(X, \Delta)$ is klt and the $\RR$-divisor $K_X+\Delta$ is effective, then the pair $(X,\Delta)$ admits a minimal model, respectively a good minimal model. 
     \item[(4)] Let $n$ be a positive integer. We say that \textit{the existence of good minimal models holds in dimension $n$} if for any klt pair $(X, \Delta)$ such that $X$ has dimension $n$ and the $\RR$-divisor $K_X + \Delta$ is effective, there exists a good minimal model for the pair $(X, \Delta)$. 
    \end{itemize}
    \end{definition} 

    The existence of minimal models is known in dimension up to $4$ \cite{Bi11}, while the existence of good minimal models is known in dimension up to $3$ by \cite{KMM94} and \cite{Sh96}. 

We also consider the following \textit{good minimal model assumption},
stated for a fix $(X,\gD)$ klt pair with $K_X+\gD$ effective and $\gD$ a $\QQ$-divisor.

\begin{assumption}\label{hyp-GMMPX} 
The following properties hold
for any effective $\QQ$-divisor $B$ on $X$ such that
    $(X,\Delta + B)$ is klt.
    \begin{enumerate}
        \item[(a)] The pair $(X,\Delta + B)$ has a good minimal model $(Y,\Delta_Y+B_Y)$.
        \end{enumerate}
        Let $F$ denote a very general fiber of the canonical model of $(Y,\Delta_Y+B_Y)$. For any effective $\QQ$-divisor $D$ on $Y$ such that $(Y,\Delta_Y+D)$ is klt,
        \begin{enumerate}
        \item[(b)] The pair $(F,(\Delta_Y+D)|_F)$ has a good minimal model.
    \end{enumerate}
\end{assumption}

\begin{remark} Here are a few cases where Assumption \ref{hyp-GMMPX} is satisfied:
\begin{itemize}
    \item if $\dim X \le 3$ (\cite{KMM94} and \cite{Sh96});
    \item if $(X,\Delta)$ is a klt Calabi--Yau pair with $\dim X = 4$ and $X$ is uniruled but not rationally connected (\cite[Corollary B]{Laz25});
    \item any projective hyperk\"ahler manifold $X$ that satisfies the \emph{hyperk\"ahler SYZ conjecture} \cite[Conjecture 1.7]{Ver10}, notably any hyperk\"ahler manifold of one of the four known types; see \cite[Corollary 1.1]{Ma17}, \cite[Corollary 1.3]{MR21}, and \cite[Theorem 2.2]{MO22}. Note that when $\dim F > 0$, $F$ is an abelian variety.
\end{itemize}
\end{remark}

The following lemma
about the existence of good minimal models was kindly explained to us by Chen Jiang.

\begin{lemma}\label{lem-GMMPind} 
    Let $n$ be a positive integer.
    Suppose that good minimal models exist in dimension $n$, then they also exist in any dimension 
    $k \le n$. 
\end{lemma}

\begin{proof}

It suffices to prove the statement for $k = n-1$. Let $(X,\gD)$ be a klt pair with $\dim X = n-1$ and $K_X + \Delta$ $\RR$-effective. Fix an elliptic curve $E$. 
Then the product pair $(X \times E, \gD \times E)$ is klt of dimension $n$, and hence has a good minimal model $(Z,\gD_Z)$ by assumption. Consider the composition
$$f\colon Z \dto X \times E \to E.$$
We claim that $f$ is a morphism. Indeed, take a desingularization $\wt{Z}$ of $Z$ resolving $f$. By the universal property of the Albanese morphism, the morphism $\wt{Z} \to E$ factorizes into $\wt{Z} \to \Alb(\wt{Z}) \to E$. Since $Z$ has rational singularities, by~\cite[Proposition 2.3]{Re83} (see also \cite[Lemma 8.1]{KawamataMMPKodim}), the Albanese map $Z \dto \Alb(\wt{Z})$ is a morphism. This shows that $f$, as the composition $Z \to \Alb(\wt{Z}) \to E$, is indeed a morphism. 
So $(Z,\gD_Z)$ is a minimal model of $(X \times E, \gD \times E)$ over $E$. The general fiber $(F,\gD_F)$ of $f$ therefore is a minimal model of $(X, \Delta)$. Since $K_Z + \gD_Z$ is semiample, so is $K_F + \gD_F$.
\end{proof}

\section{Convex geometry}\label{sec-convgeom}

\subsection{General notation for cones and Looijenga's results.}\label{ssec-notation}
In this subsection, we let $V_{\ZZ}$ be a free $\ZZ$-module of finite rank, and $V\cnec V_{\ZZ}\otimes \RR$. A {\it convex cone} in $V$ is a subset of $V$ that is invariant by multiplication by positive scalars and by  taking sums. For any subset $S$ of $V$, the convex cone \textit{generated by} $S$ is defined as the smallest convex cone containing $S$.
A convex cone $C\subset V$ is called:
    \begin{itemize}
        \item \textit{strictly convex} if its closure $\ol{C}$ contains no line;
        \item \textit{polyhedral}, respectively \textit{rational polyhedral} if $C$ contains the origin, and is  generated by finitely many elements of $V$, respectively of $V_{\ZZ}$.
    \end{itemize}
For any convex cone $C$ in $V$, we define $C^+$ as the convex cone generated by $\ol{C}\cap V_{\ZZ}$. Note that any inclusion of convex cones $C_1\subset C_2$ in $V$ is preserved by this operator, i.e., $C_1^+\subset C_2^+$. 

We define the {\it relative interior} ${\rm ri}(C)$ to be the interior of $C$ in the $\RR$-linear subspace  of $V$ generated by $C$. If $C$ has nonempty interior, then we say that $C$ has {\it full dimension} and write $C^{\circ}$ for the (relative) interior of $C$.

The following statement is contained in~\cite[Proposition-Definition 4.1]{Lo14}
\footnote{More precisely, we apply
 \cite[Proposition-Definition 4.1]{Lo14} 
 to the open cone $C^\circ$; note that $\ol{C^\circ} = \ol{C}$ since $C$ is a convex cone with nonempty interior.}

\begin{pro}[Looijenga]\label{pro_lo4.1}
 Let $C\subset V$ be a strictly convex cone with nonempty interior. Let $\Gamma$ be a subgroup of $\GL(V_{\ZZ})$ preserving $C$. Let $\Pi$ be a polyhedral cone contained in $C^+$ such that $C^\circ \subset \Gamma \cdot \Pi$. Then
    $\Gamma \cdot \Pi = C^+.$
\end{pro}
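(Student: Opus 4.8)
The plan is to prove the two inclusions separately: $\Gamma\cdot\Pi\subseteq C^+$ is formal, while $C^+\subseteq\Gamma\cdot\Pi$ carries all the content. For the first inclusion, each $g\in\Gamma$ preserves $C$ by hypothesis, hence its closure $\ol{C}$, and preserves $V_\ZZ$ since $g\in\GL(V_\ZZ)$; therefore $g$ preserves $\ol{C}\cap V_\ZZ$ and the convex cone $C^+$ it generates. As $\Pi\subseteq C^+$, we get $g\cdot\Pi\subseteq C^+$ for all $g$, whence $\Gamma\cdot\Pi\subseteq C^+$. Before the reverse inclusion I would record that $\Pi$ is full-dimensional: the union $\bigcup_{g\in\Gamma}g\cdot\Pi$ covers the nonempty open set $C^\circ$, and $\Gamma\subseteq\GL(V_\ZZ)$ is countable, so if $\Pi$ lay in a hyperplane then $C^\circ$ would be a countable union of nowhere-dense closed sets, contradicting the Baire category theorem. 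Hence $\Pi^\circ\neq\emptyset$, and since $C\subseteq C^+\subseteq\ol{C}$ forces $(C^+)^\circ=C^\circ$, I may fix a base point $a\in\Pi^\circ\subseteq C^\circ$.

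For the reverse inclusion, fix $x\in C^+$ and consider the segment $p_t:=(1-t)a+tx$ for $t\in[0,1]$. Since $a\in C^\circ$ and $x\in\ol{C}$, convexity gives $p_t\in C^\circ$ for every $t\in[0,1)$, so $p_t\in\Gamma\cdot\Pi$ by hypothesis. The goal is to produce a single $g_0\in\Gamma$ with $p_{t_n}\in g_0\cdot\Pi$ for some sequence $t_n\to1$; then closedness of the polyhedral cone $g_0\cdot\Pi$ yields $x=\lim p_{t_n}\in g_0\cdot\Pi\subseteq\Gamma\cdot\Pi$, finishing the proof. By a pigeonhole argument on the finitely many translates involved, this reduces to the finiteness statement that only finitely many translates $g\cdot\Pi$ meet the compact segment $[a,x]$: if $g^{(1)}\cdot\Pi,\dots,g^{(N)}\cdot\Pi$ are these translates, one of the corresponding parameter sets $\{t:p_t\in g^{(j)}\cdot\Pi\}$ must accumulate at $1$.

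The main obstacle is exactly this finiteness, which I would prove by contradiction: given infinitely many distinct $g_n\in\Gamma$ with $q_n\in g_n\cdot\Pi\cap[a,x]$, pass to a convergent subsequence $q_n\to p_{t_\infty}\in[a,x]$. If $t_\infty<1$, the limit lies in the open cone $C^\circ$, and one reaches a contradiction from proper discontinuity of the $\Gamma$-action there: as $C$ is strictly convex with nonempty interior (hence a regular cone) and $\Gamma$ is discrete in $\GL(V)$ because $\GL(V_\ZZ)$ is, $\Gamma$ preserves the canonical Vinberg Riemannian metric on $C^\circ$ and therefore acts properly discontinuously, so only finitely many translates meet a compact neighborhood of $p_{t_\infty}$. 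The delicate case is $t_\infty=1$, where the accumulation concentrates at the boundary point $x$ and proper discontinuity fails; here I must use crucially that $x\in C^+$. Writing $x=\sum_i\lambda_i v_i$ with $\lambda_i>0$ and $v_i\in\ol{C}\cap V_\ZZ$, all $v_i$ lie in the minimal face $F$ of $\ol{C}$ containing $x$, so $x$ lies in the relative interior of a \emph{rational} subcone $\sigma=\sum_i\RR_{\ge0}v_i$ spanning a rational subspace $W$. I would then run a descending induction on $\dim V$: restrict the whole configuration to $C\cap W$, equipped with the lattice $W\cap V_\ZZ$ and the stabilizer $\Gamma_W\le\Gamma$ of $W$, check that the trace of the covering on this lower-dimensional regular cone again satisfies the hypotheses of the statement, and conclude by induction that $x$ lies in a translate of $\Pi\cap W$, hence in $\Gamma\cdot\Pi$. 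Verifying that the restricted data genuinely satisfies the hypotheses in lower dimension — in particular that $C^\circ\cap W$ is covered by the restricted translates and that strict convexity persists — is where the real bookkeeping lies.
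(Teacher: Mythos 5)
The paper does not actually prove this proposition: it is quoted from Looijenga, with the proof deferred to \cite[Proposition--Definition 4.1]{Lo14}. So the real question is whether your from-scratch argument is complete, and I do not think it is. The easy inclusion, the Baire argument for full-dimensionality, the segment $p_t=(1-t)a+tx$, and the reduction to ``only finitely many translates $g\Pi$ meet $[a,x]$'' are all fine (minor quibble: $C\subseteq C^+$ is false in general --- only $C^\circ\subseteq C^+\subseteq\ol{C}$ holds, which is what you actually need). The two gaps are in the finiteness argument itself. First, in the case $t_\infty<1$, proper discontinuity of $\Gamma$ on $C^\circ$ does not give what you claim. Writing $q_n=g_n\pi_n$ with $\pi_n\in\Pi$ and normalizing, the $\pi_n$ live in the compact set $\mathbb{P}(\Pi)$, but $\Pi$ is only contained in $C^+\subseteq\ol{C}$, so $\mathbb{P}(\Pi)$ may touch $\partial C$ and the $\pi_n$ may accumulate on a face of $\Pi$ lying in $\partial C$ while $q_n$ converges in $C^\circ$. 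Proper discontinuity only controls pairs of compact subsets of $C^\circ$ and says nothing about this scenario. Ruling it out is true but requires the arithmetic of the situation (that $\Gamma\le\GL(V_{\ZZ})$ and that $\Pi\cap\partial C$ consists of rational faces because $\Pi\subseteq C^+$), via a Siegel-set/height-function type bound on how far the $\Gamma$-orbit of an interior point can penetrate toward a rational boundary face; mere discreteness of $\Gamma$ is not enough. This is the same difficulty as your ``delicate case,'' not a separate, easier one.

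Second, the proposed induction for $t_\infty=1$ does not get off the ground as formulated. If $F$ is the minimal face of $\ol{C}$ containing $x$ and $W$ its span, then $W\cap\ol{C}=F\subseteq\partial C$, so $C^\circ\cap W=\emptyset$. The inductive hypothesis you would need is that the relative interior of $F$ (or of $F\cap C^+$) is covered by $\Stab_\Gamma(W)$-translates of $\Pi\cap W$ --- but the given hypothesis $C^\circ\subseteq\Gamma\cdot\Pi$ contains no information about any boundary face, and producing such a covering of the face is essentially the statement you are trying to prove. So the restricted data does not ``again satisfy the hypotheses''; the induction is circular rather than a matter of bookkeeping. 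Looijenga's actual proof handles both issues simultaneously with machinery (the structure of $C^+$ near its rational boundary faces and a local finiteness statement for the translates on all of $C^+\setminus\{0\}$) that your sketch does not supply. In short: right skeleton, but the entire content of the theorem sits in the two steps you leave open, and the specific tools you invoke for them (proper discontinuity; dimension induction on the face) do not close them.
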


Since any rational polyhedral cone $\Pi\subset C$ automatically satisfies $\Pi\subset C^+$, this proposition shows that
$C^+$ is the largest subcone of $\overline{C}$ which may be covered by 
the $\Gamma$-translates of a rational polyhedral cone. 
This motivates the following definition.

\begin{Def}
Let $C\subset V$ be a strictly convex cone with nonempty interior. Let $\Gamma$ be a subgroup of $\GL(V_{\ZZ})$ preserving $C$. We say that the action $\Gamma \acts C$ is {\it of polyhedral type}, if there exists a polyhedral cone $\Pi \subset C^+$
such that 
$C^\circ \subset \Gamma \cdot \Pi.$   
\end{Def}

This definition is slightly different from \cite[Proposition-Definition 4.1]{Lo14}, as we do not require the cone $C$ to be open. In fact, we allow some flexibility regarding the boundary of the cone $C$ here, which is convenient in later proofs.

We say that an action $\Gamma \acts C^+$ as above has a {\it rational polyhedral fundamental domain} if there exists a rational polyhedral cone 
$\Pi \subset C^+$ such that 
    $$\Gamma \cdot \Pi = C^+,$$ 
and such that for every $\gamma \in \Gamma$, the non-emptiness $\gamma{\Pi}^\circ \cap {\Pi}^\circ \neq\emptyset$ implies $\gamma=\id$. 
This property is {\it a priori} 
stronger than the fact that $\Gamma \acts C$ is of polyhedral type; they are in fact equivalent by the foundational work~\cite{Lo14} of Looijenga.

\begin{proposition}[Looijenga]\label{pro-looij}
Let $C \subset V$ be a strictly convex cone with nonempty interior.
Let $\Gamma$ be a subgroup of $\GL(V_{\ZZ})$
preserving $C$.
The following statements are equivalent.
	\begin{enumerate}   
    	\item $\Gamma \acts C$ is of polyhedral type. 
        \item $\Gamma \acts C^+$ has a rational polyhedral fundamental domain.
	\end{enumerate}
\end{proposition}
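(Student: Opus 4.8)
The plan is to prove the equivalence of the two statements in Proposition~\ref{pro-looij} by handling the two implications separately, with the substantive content lying in the direction $(1) \Rightarrow (2)$.

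The implication $(2) \Rightarrow (1)$ is essentially immediate. If $\Gamma \acts C^+$ admits a rational polyhedral fundamental domain $\Pi \subset C^+$ with $\Gamma \cdot \Pi = C^+$, then in particular $C^\circ \subset C^+ = \Gamma \cdot \Pi$. Since a rational polyhedral cone is in particular polyhedral and is contained in $C^+$, the defining condition of ``polyhedral type'' is satisfied verbatim, so $(1)$ holds. Thus no real work is needed here.

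For $(1) \Rightarrow (2)$, the starting observation is that Proposition~\ref{pro_lo4.1} already gives us, from a polyhedral cone $\Pi_0 \subset C^+$ with $C^\circ \subset \Gamma \cdot \Pi_0$, the covering property $\Gamma \cdot \Pi_0 = C^+$. What remains is the genuinely difficult part: upgrading this to the existence of a \emph{rational polyhedral} cone $\Pi$ that is an honest fundamental domain, i.e.\ satisfies both $\Gamma \cdot \Pi = C^+$ \emph{and} the disjointness-of-interiors condition $\gamma \Pi^\circ \cap \Pi^\circ \neq \emptyset \Rightarrow \gamma = \id$. First I would reduce to the open cone: since $\ol{C^\circ} = \ol C$ and $(C^\circ)^+ = C^+$ (by the footnote accompanying Proposition~\ref{pro_lo4.1}), I may freely replace $C$ by $C^\circ$ and work inside the open strictly convex cone, where Looijenga's theory of the so-called application domain applies. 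The heart of the matter is to invoke the foundational results of Looijenga~\cite{Lo14}: the hypothesis that the action is of polyhedral type is precisely the input needed to conclude that $\Gamma$ acts on $C^+$ (equivalently on the convex hull $C^+$ of the rational points of $\ol C$) with a rational polyhedral fundamental domain in the strict sense. Concretely, I would cite the relevant statement from~\cite{Lo14} (the polyhedrality/finiteness package underlying Proposition-Definition~4.1) which guarantees that, for a subgroup of $\GL(V_\ZZ)$ preserving a strictly convex cone and acting of polyhedral type, the quotient is covered by finitely many rational polyhedral pieces that can be assembled into a single fundamental domain with disjoint interiors.

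The main obstacle, and the step I expect to require the most care, is producing the \emph{rationality} of the fundamental domain together with the sharp disjointness condition simultaneously. Polyhedral type only supplies an \emph{a priori} irrational, possibly overlapping, covering polyhedron $\Pi_0$; converting this into a rational polyhedral set whose $\Gamma$-translates tile $C^+$ without interior overlap is exactly the content of Looijenga's fundamental-domain construction and is not formal. The likely strategy is to show that $\Gamma$ acts properly discontinuously on $C^\circ$ (using strict convexity and the arithmeticity of $\Gamma \subset \GL(V_\ZZ)$), so that the stabilizers of rational polyhedral subcones are finite and the cone admits a locally finite $\Gamma$-invariant rational polyhedral decomposition; one then selects one representative chamber from each orbit. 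I would either extract this directly as a black box from~\cite{Lo14}, flagging the precise proposition used, or sketch the proper-discontinuity argument and the selection of orbit representatives, and verify that the resulting $\Pi$ is rational polyhedral, covers $C^+$, and meets each orbit in a set with disjoint interior — thereby establishing $(2)$ and completing the equivalence.
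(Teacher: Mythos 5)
Your proposal is correct and matches the paper's approach: the paper likewise treats $(2)\Rightarrow(1)$ as immediate and disposes of $(1)\Rightarrow(2)$ by citing Looijenga's results directly, namely \cite[Proposition 4.1, Application 4.14, and Corollary 4.15]{Lo14} (with \cite[Lemma 3.5]{LZ22} for details). Your additional sketch of proper discontinuity and orbit-representative selection is a reasonable gloss on what those cited results contain, but the paper does not spell any of it out.
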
 

\begin{proof} Proposition \ref{pro-looij} is essentially \cite[Proposition 4.1, Application 4.14, and Corollary 4.15]{Lo14}; see also \cite[Lemma 3.5]{LZ22} for more details.
\end{proof}

\ssec{A descent property of actions of polyhedral type}
We define a face of a convex cone as follows, 
which is equivalent to the definition in~\cite[p. 162]{Ro70}.

\begin{definition}
   Let $C$ be a convex cone. A {\it face} of $C$ is a convex cone $F\subset C$ such that for any closed line segment $I \subset C$
with $I \cap F \ne \emptyset$, it holds either
$$I \subset  F \ \ \text{ or } 
\ \ I \cap F = \Set{ \text{one end point of } I}.$$
\end{definition}

\begin{remark}\label{rem-face-plus}
    Let $C\subset V$ be a convex cone, and let $F$ be a face of the convex cone $C^+$. Then $F$ coincides with the convex cone generated by $F\cap V_{\ZZ}$. Indeed, any element $f\in F\subset C^+$ can be written as a sum
    $f=\sum_{i=1}^n \lambda_i c_i$ with $c_i\in\ol{C}\cap V_{\ZZ}$ and $\lambda_i > 0$. Since $F$ is a face of $C^+$ and $c_i\in C^+$ for all $i$, this implies that $c_i\in F$ for all $i$, as wished.
\end{remark}

The property of being of polyhedral type descends well from a convex cone to its faces. This is the content of the following proposition. 
We will apply it to prove a descent result for the nef cone conjecture
(see Proposition~\ref{prop_descent}). For a group action $\Gamma\acts C$ and a subset $F\subset C$, we denote by 
$$\Stab(\Gamma, F) \cnec \Set{\gamma \in \Gamma :  \gamma(F) = (F)}$$ 
the {\it stabilizer} of $F$ in $\Gamma$.

\begin{pro}\label{pro_descent_face}
Let $\Gamma \acts C$ be an action of polyhedral type.
Then for each face $F$ of $C^+$, the action $\Stab(\Gamma, F) \acts F$ 
is of polyhedral type as well. 
\end{pro}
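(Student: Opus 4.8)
The plan is to verify the definition of polyhedral type directly for the restricted action: I want to produce a rational polyhedral cone $\Pi_F\subset F^+$ (computed inside the subspace $V_F:=\Span_\RR(F)$, with lattice $V_\ZZ\cap V_F$) such that $F^\circ\subset\Stab_\Gamma(F)\cdot\Pi_F$. Here $F$ is strictly convex with nonempty interior in $V_F$, being a face of the strictly convex cone $C^+$, and $\Stab_\Gamma(F)$ preserves both $F$ and the lattice $V_\ZZ\cap V_F$, so the statement makes sense and producing such a $\Pi_F$ is exactly what ``of polyhedral type'' requires.

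First I would invoke Proposition~\ref{pro-looij}: since $\Gamma\acts C$ is of polyhedral type, there is a rational polyhedral fundamental domain $\Pi$ for $\Gamma\acts C^+$, so that $\Gamma\cdot\Pi=C^+$. I record two structural facts. (a) For every $\gamma\in\Gamma$, the set $\gamma\Pi\cap F$ is a face of the polyhedral cone $\gamma\Pi$ — hence itself rational polyhedral and contained in $F\subset F^+$ — because $F$ is a face of $C^+$ and $\gamma\Pi\subset C^+$. (b) Face preservation: if $x\in F^\circ$ and $\gamma x\in F$ for some $\gamma\in\Gamma$, then $\gamma F=F$; indeed $\gamma x\in\text{relint}(\gamma F)\cap F$ forces $\gamma F\subseteq F$ (a face meeting another face in a relative interior point is contained in it), and equality of dimension gives $\gamma F=F$. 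In particular $\Gamma x\cap F=\Stab_\Gamma(F)\cdot x$ for every $x\in F^\circ$.

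Next I would set $T:=\Set{\gamma\in\Gamma:\gamma\Pi\cap F^\circ\neq\emptyset}$. Since $\Gamma\cdot\Pi=C^+\supset F^\circ$, the pieces $\Set{\gamma\Pi\cap F:\gamma\in T}$ cover $F^\circ$. The group $\Stab_\Gamma(F)$ acts on $T$ by left multiplication (because $\sigma F^\circ=F^\circ$ for $\sigma\in\Stab_\Gamma(F)$), and the assignment $\Stab_\Gamma(F)\gamma\mapsto\gamma^{-1}F$ identifies the left cosets in $T$ with the faces of $C^+$ lying in the $\Gamma$-orbit of $F$ whose relative interior meets $\Pi$ (note $\gamma\in T$ exactly when $\text{relint}(\gamma^{-1}F)\cap\Pi\neq\emptyset$). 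Granting for a moment that there are only finitely many such cosets, I pick representatives $\gamma_1,\dots,\gamma_k$ and let $\Pi_F$ be the rational polyhedral cone generated by $\bigcup_{j=1}^k(\gamma_j\Pi\cap F)$; it is contained in $F^+$ by fact (a) together with the convexity of $F$. For the covering, take $x\in F^\circ$ and write $x=\gamma y$ with $y\in\Pi$, so that $\gamma\in T$ and $\gamma=\sigma\gamma_j$ for some $\sigma\in\Stab_\Gamma(F)$ and some $j$; then $\sigma^{-1}x=\gamma_j y\in\gamma_j\Pi\cap F\subset\Pi_F$, whence $x\in\sigma\Pi_F\subset\Stab_\Gamma(F)\cdot\Pi_F$. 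This yields $F^\circ\subset\Stab_\Gamma(F)\cdot\Pi_F$, as desired.

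The crux — and the step I expect to be the main obstacle — is the finiteness asserted above: that only finitely many faces of $C^+$ in the $\Gamma$-orbit of $F$ have relative interior meeting the fundamental domain $\Pi$. The essential input is that $F$, like every face of $C^+$, is \emph{rational}: by Remark~\ref{rem-face-plus} it is generated by $F\cap V_\ZZ$. This is precisely the mechanism that, in the cone-conjecture picture, separates rational boundary faces (met by only finitely many translates of a fundamental domain modulo the stabilizer) from irrational ones (at which infinitely many translates accumulate). I would establish the finiteness by combining this rationality with Looijenga's control of the $\Gamma$-action along rational boundary faces and the local finiteness of the tiling $\Set{\gamma\Pi}$ transverse to $F$, using the face-preservation fact (b) to see that the pieces meeting $F^\circ$ organize into finitely many $\Stab_\Gamma(F)$-orbits, while the rational polyhedrality of $\Pi$ keeps the relevant combinatorial data (the faces of $\Pi$ realized by the pieces $\gamma\Pi\cap F$) finite. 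Once this finiteness is in hand, the assembly above completes the proof.
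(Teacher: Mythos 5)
Your overall architecture is the same as the paper's (take a rational polyhedral cone $\Pi$ with $\Gamma\cdot\Pi=C^+$ via Proposition~\ref{pro_lo4.1}/\ref{pro-looij}, organize the pieces $\gamma\Pi\cap F$ into $\Stab_\Gamma(F)$-orbits, pick finitely many representatives, and sum them to get $\Pi_F$), and your facts (a) and (b) as well as the final assembly are correct. But there is a genuine gap exactly where you flag it: the finiteness of the set of left $\Stab_\Gamma(F)$-cosets in $T$ is asserted, not proved, and the route you sketch for it --- ``Looijenga's control along rational boundary faces'' plus ``local finiteness of the tiling $\Set{\gamma\Pi}$ transverse to $F$'' --- is not a usable argument. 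The tiling of $C^+$ by translates of a fundamental domain is in general \emph{not} locally finite near the boundary of $C$ (infinitely many translates can share a common boundary face; finiteness only holds modulo the stabilizer of that face, which is precisely the statement you are trying to prove), and Looijenga's results do not hand you this finiteness as a black box. As written, the proof is incomplete at its central step.

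The paper closes this gap with an elementary convex-geometric device that makes the finiteness automatic: index not by translates of $\Pi$ but by the finitely many faces of the polyhedral cone $\Pi$. Let $F_i$, $i\in I$, be the relative interiors of the nonzero faces of $\Pi$, and set $\fF_i=\Set{g\in\Gamma : gF_i\cap{\rm ri}(F)\neq\emptyset}$; one first checks (Rockafellar, Theorem 18.1 and Corollary 6.5.2) that $gF_i\cap{\rm ri}(F)\neq\emptyset$ forces $gF_i\subset{\rm ri}(F)$. The key point is then that for $g,h\in\fF_i$ one has $\emptyset\neq F_i\subset{\rm ri}(g^{-1}F)\cap{\rm ri}(h^{-1}F)$, and since relative interiors of distinct faces of $C^+$ are disjoint, $g^{-1}F=h^{-1}F$, i.e.\ $hg^{-1}\in\Stab_\Gamma(F)$. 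Thus each $\fF_i$ is a single left coset (or empty), so ${\rm ri}(F)=\bigcup_{i\in I}\Stab_\Gamma(F)\cdot g_iF_i$ is a union over the \emph{finite} set $I$, with no separate finiteness lemma needed; in your language, $T=\bigcup_i\fF_i$ meets at most $|I|$ cosets. If you replace your appeal to local finiteness by this face-of-$\Pi$ bookkeeping, your proof goes through and coincides with the paper's.
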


\begin{proof} 

Applying Proposition~\ref{pro_lo4.1} to the action $\Gamma\acts C$ of polyhedral type, we obtain a polyhedral cone $\Pi \subset C^+$ such that $\Gamma \cdot \Pi = C^{+}$. Let $\{F_i\}_{i\in I}$ be the relative interiors of the finitely many non-zero faces of the polyhedral cone $\Pi$. 

For each index $i \in I$, we consider the set 
$$\fF_i \cnec \Set{g \in \Gamma : gF_i \cap {\rm ri}(F) \ne \emptyset }.$$ 
We have
\begin{equation}\label{eqn-deffFi}
\fF_i = \Set{g \in \Gamma : gF_i \subset {\rm ri}(F) }.
\end{equation} 
The reverse inclusion is clear. For the direct inclusion, note that $gF_i \cap {\rm ri}(F) \ne \emptyset$ implies $gF_i \subset F$ by ~\cite[Theorem 18.1]{Ro70}, and thus, $gF_i \subset \mathrm{ri}(F)$ by~\cite[Corollary 6.5.2]{Ro70}. The set $\fF_i$ is endowed with an action of $\Stab(\Gamma,F)$ by left-multiplication. 
    
Note that for every $g,h \in \fF_i$, we have
$$\emptyset\neq F_i \subset {\rm ri}(g^{-1}F) \cap {\rm ri}(h^{-1}F),$$
and ${\rm ri}(h^{-1}F)$ and ${\rm ri}(g^{-1}F)$ both are relative interiors of faces of $C^+$. This implies that ${\rm ri}(h^{-1}F) = {\rm ri}(g^{-1}F)$, because relative interiors of distinct faces of $C^+$ are disjoint~\cite[Theorem 18.2]{Ro70}. Thus, the action of $\Stab(\Gamma,F)$ on $\fF_i$ is transitive. 
	
For each index $i \in I$, we choose one element $g_i \in \fF_i$, and let $\Phi_i \cnec g_iF_i \subset {\rm ri}(F)$. We have
$${\rm ri}(F) = \bigcup_{g \in \Gamma} ({\rm ri}(F) \cap g\Pi) = \bigcup_{i\in I} \bigcup_{g \in \fF_i} g F_i = \bigcup_{i\in I} \Stab(\Gamma,F)\cdot \Phi_i,$$
where the second equality follows from ~\eqref{eqn-deffFi}, and the third equality from the transitivity of the action of $\Stab(\Gamma,F)$ on $\fF_i$.

We introduce the following convex cone
$$\Pi_F \cnec \sum_{i \in I} \ol{\Phi_i}.$$
As a finite sum of polyhedral cones, it is a polyhedral cone. By \cite[Theorem 18.1]{Ro70}, it is contained in the face $F$. It is thus contained in $F^+$ by Remark \ref{rem-face-plus}. 
Moreover, we have
\[ {\rm ri}(F)  \ \ = \ \   \Stab(\Gamma,F)\cdot \bigcup_{i\in I} \Phi_i \ \ \ \subset \ \ \Stab(\Gamma,F)\cdot\sum_{i\in I} \Phi_i \ \ \ \subset \ \ \Stab(\Gamma,F)\cdot \Pi_F. \qedhere
\]
\end{proof}

\subsection{Assembling actions of polyhedral type in a chamber decomposition}\label{ssec-assemble-convex}

  We keep the same setting as before:
  let $C\subset V$ be a strictly convex cone with nonempty interior,
  and let $\Gamma \le \GL(V_{\ZZ})$ be a subgroup preserving $C$. 
  
  Let $\{N_Y\}_{Y \in I}$ be a collection of convex cones contained in $C$,
  with nonempty interiors, such that
  \begin{itemize}
  \item the action of $\Gamma$ on $C$ naturally induces an action by permutations on the set $\{N_Y\}_{Y \in I}$; 
  
  \item letting $M \cnec \bigcup\limits_{Y \in I} N_Y^+$, we have
  $C^{\circ}\subset M \subset C.$ 
  \end{itemize}

\begin{pro}\label{prop_glue_chamber}
 In the above setting, we assume that 
 the action of $\Gamma$ by permutations on the index set $I$ has finitely many orbits, and that for every $Y\in I$, the action $\Stab(\Gamma,N_{Y}) \acts N_Y$
is of polyhedral type.
Then, the action
  $\Gamma \acts C$ is of polyhedral type as well, and we have $C^{+} \subset C$. 
\end{pro}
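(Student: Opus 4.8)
The plan is to manufacture a single polyhedral cone $\Pi\subset C^+$ witnessing that $\Gamma\acts C$ is of polyhedral type, by gluing together fundamental domains attached to orbit representatives of the chambers $N_Y$. Using the finiteness of orbits, I would first choose cones $N_{Y_1},\dots,N_{Y_k}$ representing the finitely many $\Gamma$-orbits in $\{N_Y\}_{Y\in I}$. Each $N_{Y_j}\subset C$ is strictly convex (its closure sits inside $\ol C$, which contains no line) and has nonempty interior, so Proposition~\ref{pro_lo4.1} applies to the action $\Stab_\Gamma(N_{Y_j})\acts N_{Y_j}$. As this action is assumed of polyhedral type, I obtain a polyhedral cone $\Pi_j\subset N_{Y_j}^+$ with $N_{Y_j}^\circ\subset\Stab_\Gamma(N_{Y_j})\cdot\Pi_j$, and Proposition~\ref{pro_lo4.1} upgrades this to the equality $\Stab_\Gamma(N_{Y_j})\cdot\Pi_j=N_{Y_j}^+$.

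Next I would set $\Pi\cnec\sum_{j=1}^k\Pi_j$, a finite Minkowski sum of polyhedral cones, hence polyhedral; since each $\Pi_j\subset N_{Y_j}^+\subset C^+$ and $C^+$ is a convex cone, $\Pi\subset C^+$. To check the covering $C^\circ\subset\Gamma\cdot\Pi$, take $x\in C^\circ\subset M$, so there is some $Y$ with $x\in N_Y^+$. Writing $N_Y=\gamma N_{Y_j}$ for a suitable $j$ and $\gamma\in\Gamma$ (via the permutation action) and using that $\gamma\in\GL(V_{\ZZ})$ commutes with the $(-)^+$ operator, I get $\gamma^{-1}x\in N_{Y_j}^+=\Stab_\Gamma(N_{Y_j})\cdot\Pi_j$, whence $\gamma^{-1}x=h\pi_j$ with $h\in\Stab_\Gamma(N_{Y_j})$ and $\pi_j\in\Pi_j$, so $x=\gamma h\pi_j\in\Gamma\cdot\Pi$. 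This shows $\Gamma\acts C$ is of polyhedral type, and Proposition~\ref{pro_lo4.1} now yields $\Gamma\cdot\Pi=C^+$.

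Finally, to prove $C^+\subset C$ I would exploit this last equality together with the Minkowski decomposition of $\Pi$: every element of $C^+=\Gamma\cdot\Pi$ has the form $\gamma\bigl(\sum_j\pi_j\bigr)=\sum_j\gamma\pi_j$ with $\pi_j\in\Pi_j$, and each summand satisfies $\gamma\pi_j\in\gamma N_{Y_j}^+=(\gamma N_{Y_j})^+\subset M\subset C$; since $C$ is a convex cone it is closed under addition, so the sum lies in $C$.

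The main obstacle is the careful bookkeeping around the $(-)^+$ operator and the boundaries of the cones, since neither $C$ nor the chambers $N_Y$ are assumed open or closed. Two points are essential. To cover a point $x\in C^\circ$ that may lie on $\partial N_Y$, I need the full equality $\Stab_\Gamma(N_{Y_j})\cdot\Pi_j=N_{Y_j}^+$ rather than mere coverage of the interior, which is exactly what Looijenga's Proposition~\ref{pro_lo4.1} provides. And the inclusion $C^+\subset C$ crucially uses that $C$ is closed under addition, whereas $M=\bigcup_{Y\in I} N_Y^+$, being a union of cones, is not; this is why one writes elements of $\Gamma\cdot\Pi$ as finite sums of elements each lying in $M\subset C$, rather than arguing chamber by chamber.
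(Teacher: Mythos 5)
Your proof is correct and follows essentially the same route as the paper: pick orbit representatives, use Looijenga's result to upgrade each stabilizer action to a covering $\Stab_\Gamma(N_{Y_j})\cdot\Pi_j=N_{Y_j}^+$, take $\Pi$ to be the cone generated by the $\Pi_j$ (your Minkowski sum is the same cone), transport points of $M$ back to a representative chamber via the permutation action, and conclude with Proposition~\ref{pro_lo4.1}. The only cosmetic difference is that the paper invokes Proposition~\ref{pro-looij} to make each $\Pi_j$ rational polyhedral and deduces $\Pi\subset C^+$ from rationality, whereas you get $\Pi\subset C^+$ directly from $\Pi_j\subset N_{Y_j}^+\subset C^+$; both are valid.
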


\begin{proof} By assumption, we can pick
a finite set of representatives $\{ Y_1, \dots, Y_k \}$ for the orbits of $\Gamma \acts I$.
For each $Y_i$, there exists by Proposition \ref{pro-looij}
a rational polyhedral cone $\Pi_i$ inside $N_{Y_i}^+$ such that
$$\Stab(\Gamma,N_{Y_i}) \cdot \Pi_i = N_{Y_i}^+.$$
Let $\Pi$ be the convex cone generated by all of the $\Pi_{i}$, for $1\le i\le k$.
It is a rational polyhedral cone contained in $C$ by assumption, and in $C^+$ since it is rational polyhedral.
For any $N_Y$, there are an element $g \in \Gamma$ 
and an index $i \in \{1, \dots, k\}$ such that 
$$N_Y^+ = g \cdot N_{Y_i}^+ = g \cdot \Stab(\Gamma,N_{Y_i})\cdot \Pi_i \subset \Gamma \cdot \Pi,$$
thus taking the union over $I$, we obtain
$C^\circ \subset M \subset \Gamma \cdot \Pi.$
The action $\Gamma \acts C$
is thus of polyhedral type, and we can apply Proposition~\ref{pro_lo4.1} to show that
\[ C^{+} = \Gamma\cdot \Pi \subset C. \qedhere
\]
\end{proof}

    \section{Chamber decompositions of cones of divisors}\label{sec-chamber-decs}

    \subsection{Mori chambers}\label{subsec-morichamber}
    Let $X$ be a normal $\QQ$-factorial projective variety. For a $\QQ$-factorial birational contraction $(Y,f)$ of $X$, we define the {\it $f$-Mori chamber} $\Eff(X;f\colon X\dashrightarrow Y)$, or for short $\Eff(X;f)$, as the following cone
    
    $$\Eff(X;f) \cnec f^*\Nefe(Y)+\sum_{E\in \Exc(f)}\RR_{\ge 0}[E] \quad \subset \quad \Eff(X),$$
    
   \noindent where $\Exc(f)$ is the (finite)
   set of prime exceptional divisors of $f$.
    Note that it is a strictly convex cone of full dimension.

\begin{remark}\label{rem_geocone} Note that the following facts hold. 
\begin{itemize}
    \item If $\alpha\colon X\dto Y$ is a small $\QQ$-factorial modification and $\mu\colon Y \dto Z$ is a $\QQ$-factorial birational contraction, then $\alpha^*\Eff(Y;\mu) = \Eff(X;\mu \circ \alpha)$.
    \item If $\alpha\colon X\dashrightarrow Y$ is a small $\QQ$-factorial modification, we have $\Eff(X;\alpha)=\alpha^*\Nefe(Y)$. 
\end{itemize}
\end{remark}

   The following lemma motivates the choice of the name {\it Mori chambers}. It generalizes \cite[Lemma 1.5]{Ka97}, which describes pullbacks of nef cones of minimal models of a given Calabi--Yau variety $X$ as {\it chambers} within the movable cone $\Mov(X)$.

  \begin{lemma}\label{lem_generalized_ka1.5} Let $X$ be a normal $\QQ$-factorial projective variety. Let $(Y_1,f_1)$ and $(Y_2,f_2)$ be marked $\QQ$-factorial birational contractions of $X$. Then, the following are equivalent:
    \begin{itemize}
    		\item[$(1)$] $(Y_1, f_1)$ and $(Y_2, f_2)$ are isomorphic; 
    		\item[$(2)$] the two cones $\Eff(X; f_1)$ and $\Eff(X; f_2)$ coincide inside $N^1(X)_{\RR}$;
                \item[$(2')$] the two cones $f_1^{\ast}\Nefe(Y_1)$ and $f_2^{\ast}\Nefe(Y_2)$ coincide inside $N^1(X)_{\RR}$;
    		\item[$(3)$] $\Eff^\circ(X; f_1) \cap \Eff^\circ(X;f_2) \neq \varnothing$ in $N^1(X)_{\RR}$, where $\Eff^\circ(X; f_i)$ denotes the interior of $\Eff(X; f_i)$; 
                \item[$(3')$] $\mathrm{ri}(f_1^{\ast}\Nefe(Y_1)) \cap \mathrm{ri}(f_2^{\ast}\Nefe(Y_2)) \neq \varnothing$ in $N^1(X)_{\RR}$. 
    \end{itemize} 
    \end{lemma}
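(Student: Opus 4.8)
The plan is to prove all five conditions equivalent by closing two triangles through $(1)$, namely $(1)\Rightarrow(2)\Rightarrow(3)\Rightarrow(1)$ and $(1)\Rightarrow(2')\Rightarrow(3')\Rightarrow(1)$. The implications $(1)\Rightarrow(2)$ and $(1)\Rightarrow(2')$ are immediate from the definitions: if $\sigma=f_2\circ f_1^{-1}$ is biregular then $\Exc(f_1)=\Exc(f_2)$ and $\sigma^*\Nefe(Y_2)=\Nefe(Y_1)$, so $f_2^*\Nefe(Y_2)=f_1^*\sigma^*\Nefe(Y_2)=f_1^*\Nefe(Y_1)$, which gives $(2')$ and, after adding the common exceptional rays, $(2)$. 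The implications $(2)\Rightarrow(3)$ and $(2')\Rightarrow(3')$ are trivial, since $\Eff(X;f_i)$ is full-dimensional and $f_i^*\Nefe(Y_i)$ has nonempty relative interior, so that equal cones automatically have overlapping (relative) interiors. The whole content thus lies in $(3')\Rightarrow(1)$ and $(3)\Rightarrow(1)$.

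For $(3')\Rightarrow(1)$ I would pick $D$ in the common relative interior, so $D=f_1^*A_1=f_2^*A_2$ with $A_i=f_{i,*}D$ ample on $Y_i$. Writing $g=f_2\circ f_1^{-1}$, I first show $g^*A_2=A_1$: using $f_{i,*}f_i^*=\id$, the identity $f_{1,*}=(f_1^{-1})^*$, and Lemma~\ref{lem_funpull} applied to the inner map $f_1^{-1}$ (whose inverse $f_1$ is a birational contraction, as required), one computes $A_1=f_{1,*}D=f_{1,*}f_2^*A_2=(f_1^{-1})^*f_2^*A_2=(f_2\circ f_1^{-1})^*A_2=g^*A_2$, and symmetrically $h^*A_1=A_2$ for $h=g^{-1}$. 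I then conclude that $g$ is biregular by negativity: on a common resolution $Y_1\xleftarrow{p}W\xrightarrow{q}Y_2$ of $g$, the class $N:=q^*A_2-p^*A_1$ satisfies $p_*N=g^*A_2-A_1=0$ and $q_*N=A_2-h^*A_1=0$, so $N$ is both $p$- and $q$-exceptional; testing against $p$-contracted curves shows $N$ is $p$-nef and against $q$-contracted curves shows $N$ is $q$-anti-nef, so the negativity lemma forces $N\le 0$ and $N\ge 0$, hence $N=0$. Then $p^*A_1=q^*A_2$ with both $A_i$ ample shows $p$ and $q$ contract exactly the same curves; by uniqueness of the morphism to the ample model of this semiample class, $Y_1\cong Y_2$ compatibly with the markings, which is $(1)$.

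For $(3)\Rightarrow(1)$ I plan to reduce to the previous case. Given $D\in\Eff^\circ(X;f_1)\cap\Eff^\circ(X;f_2)$, pushing forward yields $A_i=f_{i,*}D$ ample on $Y_i$ and an effective, $f_i$-exceptional negative part $G_i:=D-f_i^*A_i=\sum_{E\in\Exc(f_i)}a_EE$ with $a_E>0$; that is, each $(Y_i,f_i)$ is an ample model of $D$. It then suffices to show these two ample models agree, equivalently that $G_1=G_2$ as effective divisors, for then $f_1^*A_1=D-G_1=D-G_2=f_2^*A_2$ is a common class in the relative interiors of the two nef cones and $(3')$ applies. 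To obtain $G_1=G_2$ I would transport the relation $f_1^*A_1+G_1=f_2^*A_2+G_2$ to a common resolution $W$ of $X,Y_1,Y_2$ and run the negativity lemma there, using that $\{[E]:E\in\Exc(f)\}$ is linearly independent by the decomposition $N^1(X)_\RR=f^*N^1(Y)_\RR\oplus\Span_\RR(\Exc(f))$, so that an effective divisor with numerical class supported on exceptional classes is rigid and determined by its class.

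The main obstacle is precisely this matching $G_1=G_2$. In the nef case the absence of exceptional components lets the functoriality of pullback apply cleanly and makes the negativity argument immediate, whereas here $f_1$ and $f_2$ may a priori contract different divisors, and the composition $g=f_2\circ f_1^{-1}$ need not be a birational contraction in either direction; ruling this out is the entire difficulty. The key technical input is the rigidity and negativity of exceptional divisors, which shows that such a divisor cannot be moved; equivalently, this is the assertion that $f^*\Nefe(Y)$ is recovered intrinsically from the chamber as the face $\Eff(X;f)\cap\Mov(X)$ (using $f^*\Nefe(Y)\subset f^*\Mov(Y)\subset\Mov(X)$ for one inclusion), and it is here that I expect the argument to require the most care.
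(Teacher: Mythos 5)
Your proposal is correct and rests on the same two pillars as the paper's proof: the identification $\mathrm{ri}(f_i^{\ast}\Nefe(Y_i))=f_i^{\ast}\Amp(Y_i)$ (and its analogue for $\Eff^{\circ}(X;f_i)$), followed by a negativity-lemma argument on a common resolution and the rigidity lemma. The step you single out as the main obstacle in $(3)\Rightarrow(1)$ --- extracting $f_1^{\ast}A_1\equiv f_2^{\ast}A_2$ from $f_1^{\ast}A_1+G_1\equiv f_2^{\ast}A_2+G_2$ with $G_i$ effective and $f_i$-exceptional --- is exactly the content of the Hu--Keel lemma (Lemma~\ref{lem_hk1.7}), and the argument you sketch (pull $G_2-G_1$ back to a common resolution of $X$, $Y_1$, $Y_2$; note that $\pm$ this class has effective pushforward to each $Y_i$ and is nef over each $Y_i$ up to sign; apply the negativity lemma twice to conclude it vanishes) is precisely how the paper proves that lemma, so there is no genuine gap, only an unexecuted but standard computation. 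The sole organizational difference is that the paper applies Lemma~\ref{lem_hk1.7} twice, symmetrically, to treat $(3)$ and $(3')$ uniformly and conclude biregularity of $f_1\circ f_2^{-1}$ in one stroke, whereas you first settle the exceptional-free case $(3')$ and then reduce $(3)$ to it; both routes are valid. One cosmetic point: in $(1)\Rightarrow(2')$ you invoke Lemma~\ref{lem_funpull} for $f_2^{\ast}=f_1^{\ast}\sigma^{\ast}$, but its hypothesis (the inner map contracts no divisor) need not hold for $f_1$; since $\sigma$ is biregular the identity follows directly from the definition of pullback via a resolution, so nothing is lost.
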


To prove this lemma, we first recall the following result, stated in \cite[Lemma 1.7]{HK00}. It is an application of the negativity lemma and the rigidity lemma.

   \begin{lemma}[{\cite[Lemma 1.7]{HK00}}, {\cite[Lemma 6]{CL13}}]\label{lem_hk1.7} Let $f_i: X \dashrightarrow Y_i$ be two birational contractions of normal $\QQ$-factorial projective varieties. 
    Suppose that we have a numerical equivalence 
    $$f_1^{\ast}D_1 + E_1 \equiv f_2^{\ast}D_2 + E_2 
    $$ 
    of $\RR$-divisors with $D_1$ ample on $Y_1$, $D_2$ nef on $Y_2$, 
    and $E_i$ effective $f_i$-exceptional. 
    Then $f_1 \circ f_2^{-1}\colon Y_2 \to Y_1$ is a birational morphism. 
    \end{lemma}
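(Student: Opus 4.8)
The plan is to pull everything back to a common resolution, reduce the statement to a numerical comparison of nef pullbacks on that resolution, and then invoke the rigidity lemma to produce the morphism. Concretely, I would choose a normal $\QQ$-factorial projective variety $W$ together with birational morphisms $p\colon W\to X$ and $q_i\colon W\to Y_i$ resolving the $f_i$, so that $q_i = f_i\circ p$ and hence $f_1\circ f_2^{-1} = q_1\circ q_2^{-1}$. The goal then becomes to show that $q_1$ factors through $q_2$ as a morphism.

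First I would rewrite the hypothesis on $W$. Using the negativity lemma for $p$ \cite[Lemma 3.39]{KM98}, the class $H_i \cnec p^*f_i^*D_i - q_i^*D_i$ lies in $\ker p_*$ and is effective: indeed $-H_i$ is $p$-nef, since $q_i^*D_i$ has nonnegative degree on $p$-contracted curves (as $D_i$ is nef), while $p_*H_i = 0$. Setting $F_i \cnec H_i + p^*E_i$, each $F_i$ is effective, and its support consists only of $q_i$-exceptional prime divisors: every $p$-exceptional divisor is contracted by $q_i=f_i\circ p$, and the strict transform of an $f_i$-exceptional component of $E_i$ is contracted by $q_i$ as well. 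Pulling back the given numerical equivalence by $p$ then yields
\[ q_1^*D_1 + F_1 \equiv q_2^*D_2 + F_2 \]
on $W$, with $F_i \ge 0$ and $(q_i)_*F_i = 0$.

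The crux is a two-fold application of the negativity lemma to this displayed equivalence. Intersecting it with a $q_2$-contracted curve shows that $F_2 - F_1$ is $q_2$-nef; since $(q_2)_*(F_1-F_2) = (q_2)_*F_1 \ge 0$ (using $(q_2)_*F_2 = 0$), the negativity lemma gives $F_1 \ge F_2$. Symmetrically, intersecting with a $q_1$-contracted curve and using only that $D_2$ is nef shows $F_1-F_2$ is $q_1$-nef, and $(q_1)_*(F_2-F_1) = (q_1)_*F_2 \ge 0$ (using $(q_1)_*F_1 = 0$) gives $F_2 \ge F_1$. Hence $F_1 = F_2$, and therefore $q_1^*D_1 \equiv q_2^*D_2$ as classes on $W$.

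Finally I would conclude by rigidity. For any curve $C$ contracted by $q_2$ we obtain $q_1^*D_1\cdot C = q_2^*D_2\cdot C = 0$, and since $D_1$ is ample this forces $(q_1)_*C = 0$, i.e. $C$ is contracted by $q_1$ as well. Thus $q_1$ contracts every fiber of $q_2$ to a point, and as $q_2$ is a proper birational morphism onto the normal variety $Y_2$ (so that $(q_2)_*\OO_W = \OO_{Y_2}$), the rigidity lemma yields a morphism $g\colon Y_2\to Y_1$ with $q_1 = g\circ q_2$. This $g$ agrees with $f_1\circ f_2^{-1}$ as a birational map, proving that the latter is regular. I expect the main obstacle to be the careful sign and support bookkeeping in the two negativity-lemma applications — in particular verifying the $q_i$-exceptionality of $F_i$, and checking that nefness of $D_2$ (rather than ampleness) already suffices to obtain the second inequality.
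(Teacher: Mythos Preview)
Your proof is correct and follows essentially the same route as the paper: pass to a common resolution, apply the negativity lemma twice (once over each $q_i$) to deduce $q_1^*D_1 \equiv q_2^*D_2$, and conclude by the rigidity lemma. The only difference is a small redundancy: the paper uses pullback functoriality (their Lemma~\ref{lem_funpull}) to get $p^*f_i^*D_i = q_i^*D_i$ directly, so your correction terms $H_i$ are in fact zero and your extra negativity argument for $p$ is unnecessary---but this does not affect correctness.
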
 

   We now prove Lemma \ref{lem_generalized_ka1.5}.
   
 \begin{proof}[Proof of Lemma \ref{lem_generalized_ka1.5}]  The implications $(1)\Rightarrow (2)\Rightarrow (3)$ and $(1)\Rightarrow (2')\Rightarrow (3')$ are clear. Let us prove that $(3) \Rightarrow (1)$ and $(3') \Rightarrow (1)$. 
    
    By \cite[Theorem 6.6]{Ro70}, we have $\mathrm{ri}(f_i^{\ast}\Nefe(Y_i)) = f_i^{\ast}\Amp(Y_i)$. From  \cite[Corollary 6.6.2]{Ro70}, it follows 
    \[ \Eff^{\circ}(X; f_i) = \mathrm{ri}(\Eff(X; f_i)) = f_i^{\ast}\Amp(Y_i) + \mathrm{ri}\bigg(\sum_{E\in \Exc(f)}\RR_{\ge 0}[E]\bigg) 
    \]
    where the first equality holds because the convex cone $\Eff(X; f_i)$ has full dimension. 
    
    Assume either $(3)$ or $(3')$. Then we can take for both $i=1,2$, some $\RR$-divisor $D_i$ ample on $Y_i$ and (possibly zero) $E_i$ $f_i$-exceptional on $X$ such that 
    $f_1^{\ast}D_1 + E_1 \equiv f_2^{\ast}D_2 + E_2$. 
    Applying Lemma \ref{lem_hk1.7} twice, symmetrically, we obtain that $f_1\circ f_2^{-1}\colon Y_1 \to Y_2$ is an isomorphism. This shows that $(1)$ holds.  
    \end{proof}

\begin{lemma}\label{lem-MMMC}  
 Let $(X,\Delta)$ be a klt Calabi--Yau pair.
   Let $D$ be an effective $\RR$-divisor on $X$
   such that the pair $(X,\Delta+ D)$ is klt.
   Let $f\colon (X,\Delta+ D)\dashrightarrow (Z,\Gamma_Z)$
   be a minimal model.
  Then
   $$D \in \Eff(X;f).$$
\end{lemma}

   \begin{proof}
     
Denoting by $E_i$ for $1\le i\le k$ the prime exceptional divisors of $f$, we have
$$ D \equiv K_X+\Delta+ D 
\equiv f^*(K_Z+\Gamma_Z)+\sum_{1\le i\le k} a_iE_i,$$
where we recall that $K_Z+\Gamma_Z$ is nef and $a_i=a(E_i;Z,\Gamma_Z)-a(E_i;X,\Delta + D)\ge 0$. 
As $f_*( D) \equiv K_Z+\Gamma_Z$ and
$f_*\Eff(X)\subset \Eff(Z)$, the class of $K_Z+\Gamma_Z$ is in $\Nefe(Z)$.  Hence $D \in \Eff(X;f)$.    
   \end{proof}

\subsection{Applications of the Shokurov polytope} We first recall the following result by Shokurov and Birkar (see \cite[Section 4.7]{Fu17} for more details). 

\begin{proposition}[{\cite[Proposition 3.2(3)]{Bi11}}]\label{prop_bi3.2} Let $X$ be a normal $\QQ$-factorial projective variety. Let $D_1, \dots, D_k$ be prime divisors on $X$, and let $V$ be the vector space of $\RR$-divisors spanned by the $D_i$, for $1\le i\le k$. Then the set \[
\mathcal{N}(V) := \left\{ B \in V : (X, B) \mathrm{\ is \ log \ canonical \ and} \ K_X + B \mathrm{\ is \ nef } \right \}\] is a rational polytope.
\end{proposition}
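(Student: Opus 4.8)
The plan is to treat the two defining conditions separately --- first the locus where $(X,B)$ is log canonical, then the nef condition --- and to show each behaves rationally and polyhedrally.

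First I would show that $\mathcal{L}(V) := \Set{ B \in V : (X,B) \text{ is log canonical} }$ is a compact rational polytope. Fixing a log resolution $f \colon Y \to X$ of $(X, \sum_i D_i)$, every log discrepancy $a(E, X, B)$, where $E$ ranges over the finitely many exceptional divisors and strict transforms on $Y$, is an affine-linear function of the coefficient vector $(b_i)$ of $B = \sum_i b_i D_i$, with rational coefficients. Since log canonicity can be tested on a log resolution, $\mathcal{L}(V)$ is cut out inside $\Set{b_i \ge 0}$ by the finitely many rational affine inequalities $a(E,X,B) \ge -1$. The conditions attached to the strict transforms of the $D_i$ read $-b_i \ge -1$, hence bound the $b_i$ from above, so $\mathcal{L}(V) \subseteq [0,1]^k$ is a compact rational polytope. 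This step is routine.

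Next I would encode nefness through curves. The map $\lambda \colon V \to N^1(X)_\RR$, $\lambda(B) = [K_X + B]$, is affine-linear, so $\mathcal{N}(V) = \mathcal{L}(V) \cap \lambda^{-1}(\Nef(X))$ is a compact convex set, and it is the intersection of $\mathcal{L}(V)$ with the rational affine half-spaces $H_C := \Set{ B : (K_X+B)\cdot C \ge 0 }$ over all curves $C \subset X$. Because each $H_C$ has an integral (hence rational) defining functional, it suffices to prove that only finitely many of the $H_C$ are \emph{active}, i.e.\ support a facet of $\mathcal{N}(V)$; polyhedrality and rationality then follow at once, and one writes $\mathcal{N}(V) = \mathcal{L}(V) \cap \bigcap_{l=1}^m H_{C_l}$ as a rational polytope.

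The heart of the argument, and the main obstacle, is exactly this finiteness of active walls. I would invoke the Cone Theorem of the minimal model program \cite{KM98}: for each $B \in \mathcal{L}(V)$ with $K_X + B$ not nef, there is a $(K_X+B)$-negative extremal ray spanned by a rational curve $C$ of bounded length $-(K_X+B)\cdot C \le 2\dim X$, and such negative rays are locally discrete away from the trivial wall $(K_X+B)^\perp$. An active wall of $\mathcal{N}(V)$ is supported on a ray $R$ with $(K_X+B_0)\cdot R = 0$ for some boundary point $B_0$, where $K_X+B_0$ is nef but not ample. The danger is that, as $B_0$ varies over $\partial\mathcal{N}(V)$, the corresponding rays $R_j$ accumulate toward such a trivial wall --- their $\lambda$-degrees stay bounded while their degree against a fixed ample class runs to infinity --- producing infinitely many facets along a rounded piece of the boundary. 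Ruling this out is where the strength of the MMP genuinely enters: one uses the local discreteness and rationality of negative extremal rays together with the Rationality Theorem to pin the active walls to rational hyperplanes, and the compactness of $\mathcal{L}(V)$ to extract a finite subfamily. Equivalently, the task is to show that the relevant faces $(K_X+B_0)^\perp \cap \NE(X)$ are rational polyhedral, so that the active rays cannot accumulate. I expect essentially all the work to lie in this bookkeeping; this is the delicate core of Shokurov's argument, carried out in \cite{Bi11} and \cite[Section~4.7]{Fu17}. Granting it, the conclusion follows as in the previous paragraph.
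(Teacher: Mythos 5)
The paper does not prove this proposition at all: it is quoted verbatim from Birkar \cite[Proposition 3.2.(3)]{Bi11}, with a pointer to \cite[Section 4.7]{Fu17} for details, and is then used as a black box in the proofs of Propositions \ref{prop_fg3.1}, \ref{prop_lz5.1} and \ref{prop_lz2.7}. So there is no in-paper argument to compare yours against line by line; the relevant comparison is with the standard Shokurov--Birkar proof that the citation points to.

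Measured against that, your outline has the right skeleton but is not a proof. The first half is complete and correct: cutting out the log canonical locus $\mathcal{L}(V)$ by the finitely many rational affine inequalities $a(E,X,B)\ge -1$ on a fixed log resolution, and observing that the inequalities coming from the strict transforms of the $D_i$ force $b_i\le 1$, does show that $\mathcal{L}(V)$ is a compact rational polytope. The second half, however, reduces the statement to ``only finitely many walls $H_C$ are active'' and then explicitly declines to prove that finiteness, deferring it to \cite{Bi11} and \cite[Section~4.7]{Fu17} --- which are exactly the sources the proposition is being quoted from. Since the finiteness of active walls \emph{is} the content of the proposition (everything else in your argument is formal convex geometry plus the routine first half), the proposal as written is circular as a self-contained proof: the ``delicate core'' you name is not bookkeeping around a known input but the theorem itself. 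To close the gap one must actually run the argument you gesture at: for $B_0\in\mathcal{N}(V)$ and $B\in\mathcal{L}(V)$ near $B_0$ with $K_X+B$ not nef, the Cone Theorem produces an extremal rational curve $C$ with $0<-(K_X+B)\cdot C\le 2\dim X$, whence $0\le (K_X+B_0)\cdot C$ is bounded above in terms of $\|B-B_0\|$ and the length bound; this forces the classes of the relevant curves to lie in a finite set locally around $B_0$ (discreteness of extremal rays in the $(K_X+B_0+\text{ample})$-negative part of $\NE(X)$), and compactness of $\mathcal{L}(V)$ globalizes the local polyhedrality. Your identification of the length bound and of the accumulation danger shows you know where the difficulty lives, but naming the obstacle is not the same as overcoming it.
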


We present two propositions whose proofs rely on this important result. 
Note that for a klt Calabi--Yau pair $(X,\Delta)$, the boundary $\Delta$ is only an $\RR$-divisor.

\begin{proposition}\label{prop_fg3.1} Let $(X, \Delta)$ be a klt Calabi--Yau pair. Then there is an effective $\QQ$-divisor $\Delta'$ such that $K_X + \Delta' \sim_{\QQ} 0$, and $(X, \Delta')$ is klt. 
\end{proposition}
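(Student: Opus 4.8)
The plan is to upgrade the given $\RR$-boundary $\Delta$ to a $\QQ$-boundary $\Delta'$ while preserving kltness and the Calabi--Yau condition, using the rationality encoded in Shokurov's polytope (Proposition~\ref{prop_bi3.2}). First I would set up the ambient linear space: let $D_1,\dots,D_k$ be the prime components of $\Supp\Delta$, and let $V\subset \Div(X)_\RR$ be the $\RR$-vector space they span. The boundary $\Delta$ lies in $V$, and since $(X,\Delta)$ is klt it lies in the interior (relative to the natural constraints $0\le B$, coefficients $<1$) of the log canonical locus inside $V$.

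The key observation is that the Calabi--Yau condition $K_X+\Delta\equiv 0$ constrains $\Delta$ to lie on an affine-linear subspace of $V$ cut out by numerical-triviality conditions. Concretely, I would consider the affine subspace
\[
\cN_0(V) \cnec \Set{ B \in V : K_X + B \equiv 0 },
\]
which is defined by rational linear equations (numerical equivalence is detected by a rational pairing against a basis of curve classes, or equivalently against the lattice $N^1(X)$), so $\cN_0(V)$ is a rational affine subspace of $V$ containing $\Delta$. The plan is then to intersect this with the neighborhood of $\Delta$ where the pair stays klt. By Proposition~\ref{prop_bi3.2}, the set $\cN(V)$ of $B$ with $(X,B)$ log canonical and $K_X+B$ nef is a rational polytope; since $K_X+\Delta\equiv 0$ is in particular nef, $\Delta\in\cN(V)$, and because $(X,\Delta)$ is klt, $\Delta$ sits in the interior of the locus where log canonical can be improved to klt (klt is an open condition on coefficients inside the lc polytope). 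Thus some open neighborhood $U$ of $\Delta$ in $V$ consists of klt boundaries.

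Putting these together, the intersection $\cN_0(V)\cap U$ is a nonempty open subset of the \emph{rational} affine subspace $\cN_0(V)$ containing the point $\Delta$. Rational points are dense in any rational affine subspace, so I can choose a $\QQ$-divisor $\Delta'\in\cN_0(V)\cap U$ arbitrarily close to $\Delta$. By construction $\Delta'$ is an effective $\QQ$-divisor with $(X,\Delta')$ klt and $K_X+\Delta'\equiv 0$. Finally, to strengthen numerical triviality to $\QQ$-linear triviality $K_X+\Delta'\sim_\QQ 0$, I would invoke the standard fact that a numerically trivial $\QQ$-Cartier divisor on a klt Calabi--Yau pair is $\QQ$-linearly trivial (this uses that $K_X+\Delta'$ is a $\QQ$-divisor once $\Delta'$ is rational, together with abundance/base-point-freeness type results for klt pairs, or the log canonical version of the fact that numerically trivial implies torsion in $\Pic$ for such pairs).

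The main obstacle I anticipate is the last step, passing from $K_X+\Delta'\equiv 0$ to $K_X+\Delta'\sim_\QQ 0$: for $\RR$-divisors numerical triviality need not imply $\QQ$-linear triviality, and the whole point of producing a $\QQ$-boundary is that one now has a genuine $\QQ$-Cartier class to which one can apply the theorem that numerically trivial semiample (or nef with the right vanishing) $\QQ$-divisors on klt pairs are torsion in the Picard group. Ensuring that $K_X+\Delta'$ is actually $\QQ$-Cartier (which follows from $\QQ$-factoriality of $X$ and rationality of $\Delta'$) and then citing the correct abundance-type statement in this generality is where the real content lies; everything before it is essentially the elementary density-of-rational-points argument inside Shokurov's rational polytope.
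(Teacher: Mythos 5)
Your proposal is correct and rests on the same two pillars as the paper's proof: the rationality statement of Proposition~\ref{prop_bi3.2} (used to perturb $\Delta$ into a rational boundary) and the numerically-trivial abundance result \cite[V.4.9 Corollary]{Nak04} to upgrade $K_X+\Delta'\equiv 0$ to $K_X+\Delta'\sim_\QQ 0$. The middle step, however, is executed differently. The paper writes $\Delta=\sum r_iB_i$ as a convex combination of vertices of the rational polytope $\cN(V)$, notes that $\sum r_i(K_X+B_i)\equiv 0$ with each $K_X+B_i$ nef forces $K_X+B_i\equiv 0$ for every $i$ (salience of the pseudo-effective cone), and then perturbs the barycentric coordinates $r_i$ to rational ones, so that $\Delta'=\sum r_i'B_i$ is automatically numerically anti-canonical because each summand already is. You instead observe that the locus $\Set{B\in V : K_X+B\equiv 0}$ is a rational affine subspace of $V$ --- which is valid, but note that this step silently uses the $\QQ$-factoriality of $X$ built into the paper's definition of a pair, so that $K_X$ and the components of $\Delta$ all have classes in $N^1(X)_\QQ$ --- and then take a rational point of that subspace close to $\Delta$, using openness of the klt condition. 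Your route is arguably more economical, since it does not really need the polytope $\cN(V)$ at all but only the rationality of the numerical-triviality condition; the paper's route has the advantage of proving Proposition~\ref{prop_lz5.1} simultaneously from the same vertex decomposition (the $\varepsilon>0$ case of its argument). Both versions correctly isolate the abundance step as the one requiring a genuine citation.
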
 

A proof of Proposition \ref{prop_fg3.1} is contained in \cite[Proof of Lemma 6.1 for $\mathbb{K}=\mathbb{R}$]{Go11}.

\begin{proposition}\label{prop_lz5.1} Let $(X, \Delta)$ be a klt Calabi--Yau pair. Then the inclusion $\Nefe(X) \subset \Nefp(X)$ holds. 
\end{proposition}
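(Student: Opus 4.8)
The plan is to deduce the inclusion from the Shokurov polytope (Proposition~\ref{prop_bi3.2}), using Proposition~\ref{prop_fg3.1} to guarantee a rational Calabi--Yau boundary. First I would replace $\Delta$ by an effective $\QQ$-divisor $\Delta'$ with $K_X+\Delta'\sim_{\QQ}0$ and $(X,\Delta')$ klt, as provided by Proposition~\ref{prop_fg3.1}; after relabeling, I may assume that $\Delta$ itself is a $\QQ$-divisor with $K_X+\Delta\equiv 0$. Given a class $\alpha\in\Nefe(X)=\Nef(X)\cap\Eff(X)$, I write $\alpha=[D]$ for some effective $\RR$-divisor $D$ (which is $\RR$-Cartier since $X$ is $\QQ$-factorial); the existence of such a $D$ is exactly the effectivity of $\alpha$.

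Next I would introduce the finite-dimensional space $V$ of $\RR$-divisors spanned by the prime components of $\Delta$ and of $D$, so that $\Delta,D\in V$. For $\varepsilon\ge 0$ small the pair $(X,\Delta+\varepsilon D)$ is klt, hence log canonical, and $K_X+\Delta+\varepsilon D\equiv\varepsilon\alpha$ is nef because $\alpha$ is nef. Fixing one such $\varepsilon_0>0$, this shows that $\Delta+\varepsilon_0 D$ lies in the Shokurov polytope $\mathcal{N}(V)=\{B\in V:(X,B)\text{ log canonical and }K_X+B\text{ nef}\}$, which is a rational polytope by Proposition~\ref{prop_bi3.2}.

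The crux is then to transport the rationality of $\mathcal{N}(V)$ to the nef cone through the numerical class map. I would write $\Delta+\varepsilon_0 D=\sum_i\lambda_i B_i$ as a convex combination of the rational vertices $B_i$ of $\mathcal{N}(V)$, apply the numerical class operation $B\mapsto[K_X+B]$, and use $K_X+\Delta\equiv 0$ to obtain $\varepsilon_0\alpha=[K_X+\Delta+\varepsilon_0 D]=\sum_i\lambda_i[K_X+B_i]$. Here each $[K_X+B_i]$ is a nef class, and it is \emph{rational} precisely because $\Delta$ (hence $[K_X]=-[\Delta]$) and $B_i$ are rational — this is where Proposition~\ref{prop_fg3.1} is essential, since an irrational $[K_X]$ would destroy the rationality of these classes. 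Thus $\varepsilon_0\alpha$ is a nonnegative combination of rational, hence (after clearing denominators) integral, nef classes, so $\varepsilon_0\alpha\in\Nefp(X)$, and scaling gives $\alpha\in\Nefp(X)$. I expect the main subtlety to be exactly this rationality bookkeeping: ensuring that the decomposition produced by the Shokurov polytope lands in $\Nef^+(X)$ rather than merely in $\Nef(X)$, which is what forces the use of a rational Calabi--Yau boundary from the outset.
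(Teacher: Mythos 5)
Your proof is correct and follows essentially the same route as the paper: place $\Delta+\varepsilon D$ in the Shokurov polytope $\mathcal{N}(V)$, write it as a convex combination of rational vertices $B_i$, add $K_X$, and conclude that $\varepsilon[D]$ is a positive combination of the nef rational classes $[K_X+B_i]$. The only (harmless) difference is your preliminary appeal to Proposition~\ref{prop_fg3.1}, which is not actually needed: $K_X$ is an integral Weil divisor on a $\QQ$-factorial variety, so $[K_X+B_i]$ is automatically a rational class whether or not $\Delta$ is rational --- indeed the paper runs the same argument with the original $\RR$-divisor $\Delta$ and obtains Proposition~\ref{prop_fg3.1} itself as the $\varepsilon=0$ case of that argument.
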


Proposition \ref{prop_lz5.1} is proved by \cite[Theorem 2.15]{LOP20} for klt Calabi--Yau varieties. For pairs, the argument is essentially the same; see \cite[Lemma 5.1(1)]{LZ22}. We present an alternative proof.

    \begin{proof}[Proof of Proposition~\ref{prop_lz5.1}]

    Let $D$ be an $\RR$-divisor whose numerical class is in $\Nefe(X)$. Take $0< \varepsilon \ll 1$ such that $(X, \Delta + \varepsilon D)$ is klt. Note that $K_X + \Delta + \varepsilon D\equiv \varepsilon D$ is nef. Let $V \subset \mathrm{Div}_{\RR}(X)$ be the vector space spanned by the components of $\Delta + \varepsilon D$. Then $\Delta + \varepsilon D \in \mathcal{N}(V)$. By Proposition \ref{prop_bi3.2}, the set $\mathcal{N}(V)$ is a rational polytope. We have 
    $$\Delta + \varepsilon D = \sum\limits_{i=1}^n r_i B_i\mbox{ for some }r_i\in\RR_{> 0}\mbox{ with }\sum\limits_{i=1}^n r_i=1,$$
    where $B_1,\ldots,B_n$ are some of vertices of $\mathcal{N}(V)$. 
    Adding $K_X$ on both sides, we get 
    $$\varepsilon D\equiv\sum\limits_{i=1}^n r_i(K_X+B_i).$$
    Thus, $\varepsilon D$ is in the convex hull of a finite set of nef $\QQ$-divisors. 
    \end{proof}

    The following result is not used in any of the later proofs, but we include it for the curious reader.

    \begin{corollary}\label{cor_generalized_lz5.1} Let $(X, \Delta)$ be a klt Calabi--Yau pair, and let $f\colon (X, \Delta) \dashrightarrow (Y, \Delta_Y)$ be a $\QQ$-factorial birational contraction. Then $\Eff(X; f)\subset \Eff(X; f)^{+}$.  
\end{corollary}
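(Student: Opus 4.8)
\textbf{Proof proposal for Corollary \ref{cor_generalized_lz5.1}.}
The plan is to reduce the statement to Proposition \ref{prop_lz5.1}, applied not to $X$ but to the target $(Y,\Delta_Y)$, combined with two structural features of the $f$-Mori chamber: its defining decomposition $\Eff(X;f) = f^*\Nefe(Y) + \sum_{E\in\Exc(f)}\RR_{\ge 0}[E]$, and the fact that the operator $C\mapsto C^+$ produces a convex cone, hence one closed under addition and positive scaling. Since by definition $\Eff(X;f)^+$ is the convex cone generated by the integral points of $\overline{\Eff(X;f)}$, it therefore suffices to show that each of the two summands above is contained in $\Eff(X;f)^+$.

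First I would treat the exceptional part. Each prime exceptional divisor $E\in\Exc(f)$ is a Weil divisor, so its class $[E]$ is $\QQ$-Cartier because $X$ is $\QQ$-factorial; thus some integer multiple $m_E[E]$ is an integral class, and it lies in $\Eff(X;f)\subseteq\overline{\Eff(X;f)}$. Hence $\RR_{\ge 0}[E]=\RR_{\ge 0}(m_E[E])\subseteq\Eff(X;f)^+$, and summing over $E$ gives $\sum_{E}\RR_{\ge 0}[E]\subseteq\Eff(X;f)^+$.

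Next, for the nef part, I would first observe that $(Y,\Delta_Y)$ is again a klt Calabi--Yau pair (numerical triviality of $K+\Delta$ and the klt condition being preserved under the birational contraction $f$), so that Proposition \ref{prop_lz5.1} applies on the target and yields $\Nefe(Y)\subseteq\Nefp(Y)=\Nef(Y)^+$. Thus any $N\in\Nefe(Y)$ can be written as a nonnegative combination $\sum_j\lambda_j[D_j]$ of integral classes $[D_j]\in\Nef(Y)$. Pulling back, $f^*N=\sum_j\lambda_j f^*[D_j]$, where each $f^*[D_j]$ is $\QQ$-Cartier by $\QQ$-factoriality of $X$, so a suitable integer multiple is integral, and it lies in $f^*\Nef(Y)=\overline{f^*\Nefe(Y)}\subseteq\overline{\Eff(X;f)}$ (using that $f^*$ is continuous and injective, that $f^*\Nefe(Y)\subseteq\Eff(X;f)$, and that $\overline{\Nefe(Y)}=\Nef(Y)$, the last because $\Amp(Y)\subseteq\Nefe(Y)$). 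Consequently $f^*N\in\Eff(X;f)^+$, whence $f^*\Nefe(Y)\subseteq\Eff(X;f)^+$.

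Combining the two inclusions and using that $\Eff(X;f)^+$ is closed under addition, I conclude $\Eff(X;f)=f^*\Nefe(Y)+\sum_{E}\RR_{\ge 0}[E]\subseteq\Eff(X;f)^+$. The only genuine subtlety---the \emph{main obstacle}, though not a deep one---is lattice bookkeeping: both the pullbacks $f^*[D_j]$ and the classes $[E]$ of prime exceptional divisors are a priori only $\QQ$-Cartier, so one must pass to integer multiples (legitimate precisely because $X$ is $\QQ$-factorial) in order to land among the integral points that generate $\Eff(X;f)^+$; one should also record that $(Y,\Delta_Y)$ is klt Calabi--Yau so that Proposition \ref{prop_lz5.1} is indeed available on the target.
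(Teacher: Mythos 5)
Your proof is correct and follows essentially the same route as the paper's: decompose $\Eff(X;f)$ into $f^*\Nefe(Y)$ plus the rays spanned by the ($\QQ$-Cartier) exceptional divisors, and handle the first summand by applying Proposition \ref{prop_lz5.1} to the klt Calabi--Yau pair $(Y,\Delta_Y)$ and pulling back, using $f^*\Nef(Y)\subset\ol{\Eff(X;f)}$ and injectivity of $f^*$. Your extra care in making explicit that $(Y,\Delta_Y)$ is again klt Calabi--Yau and in clearing denominators to land on integral classes matches what the paper leaves implicit.
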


\begin{proof} This is essentially a consequence of Proposition \ref{prop_lz5.1}. Let us spell it out. By definition, 
$$\Eff(X; f) =  f^*\Nefe(Y)+\sum_{E\in\mathrm{Exc}(f)} \RR_{\ge 0} [E].$$
Since every $E$ is $\QQ$-Cartier, it suffices to show that the cone $f^*\Nefe(Y)$ is contained in $\Eff(X;f)^+$ to conclude.

By Proposition \ref{prop_lz5.1} and by injectivity of $f^{\ast}$, we have
$$f^{\ast}\Nefe(Y)\subset {\rm Conv}_{\RR}\left(f^{\ast}\Nef(Y)\cap f^{\ast}N^1(Y)_{\QQ}\right),$$ 
where ${\rm Conv}_{\RR}(\cdot)$ denotes the convex hull. 
We clearly have the inclusions $f^{\ast}\Nef(Y)\subset\ol{\Eff}(X;f)$, and $ f^{\ast}N^1(Y)_{\QQ}\subset N^1(X)_{\QQ}$. Taking intersections and then convex hulls, we obtain the inclusion wished. 
\end{proof}

We conclude this subsection by mentioning the following result, which can be of interest to the reader. We will not use it below. The proof is again an application of Proposition \ref{prop_bi3.2}.

\begin{proposition}[{\cite[Theorem 2.8]{LZ22}}]\label{prop_lz2.7}  
Let $(X, \Delta)$ be a klt Calabi--Yau pair. 
Let $\Pi \subset \Eff(X)$ be a polyhedral cone. 
Then $\Pi \cap \Nef(X)$ is a polyhedral cone as well.
Moreover, if the polyhedral cone $\Pi$ is rational, 
then $\Pi \cap \Nef(X)$ is also rational.
\end{proposition}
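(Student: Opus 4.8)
The goal is to prove Proposition \ref{prop_lz2.7}: given a klt Calabi--Yau pair $(X,\Delta)$ and a polyhedral cone $\Pi\subset\Eff(X)$, the intersection $\Pi\cap\Nef(X)$ is polyhedral, and rational if $\Pi$ is.

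The plan is to reduce the question to the Shokurov polytope result, Proposition \ref{prop_bi3.2}. First I would reduce to the case where $\Pi$ is \emph{simplicial}, spanned by finitely many \emph{effective} divisor classes $[D_1],\dots,[D_k]$: indeed, a polyhedral cone is a finite union of simplicial subcones, and a finite union of (rational) polyhedral cones is again (rational) polyhedral, so it suffices to treat each simplicial piece. By definition of $\Eff(X)$, after subdividing we may take the generators $D_i$ to be effective Cartier divisors (or effective $\RR$-divisors, in the rational case effective $\QQ$-divisors); in the rational-$\Pi$ case the $D_i$ may be taken with rational, even integral, coefficients. Let $V\subset\Div_\RR(X)$ be the finite-dimensional space spanned by the prime components of $D_1,\dots,D_k$.

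The key step is to identify $\Pi\cap\Nef(X)$ with (the cone over) the rational polytope $\mathcal N(V)$ from Proposition \ref{prop_bi3.2}, up to an affine translation by $-K_X$. Concretely, consider a class $\xi=\sum_i \lambda_i[D_i]\in\Pi$ with $\lambda_i\ge 0$; let $B=\sum_i\lambda_i D_i\in V$ be the corresponding effective $\RR$-divisor. Because $(X,\Delta)$ is klt and $K_X+\Delta\equiv 0$, we have $K_X\equiv -\Delta$, so the numerical class of $K_X+B$ equals that of $B-\Delta$; in particular $\xi=[B]$ is nef precisely when $K_X+B$ is nef. Up to enlarging $V$ to include the components of $\Delta$ (which keeps $V$ finite-dimensional, and keeps everything rational if $\Delta$ is rational---but since $\Delta$ need not be rational, the cleaner route is to first replace $\Delta$ by a klt rational boundary $\Delta'$ with $K_X+\Delta'\sim_\QQ 0$ via Proposition \ref{prop_fg3.1}), the condition that $(X,B+\Delta')$ be log canonical is an \emph{open} and, near the simplex, automatically satisfied constraint for small perturbations, while nefness of $K_X+B+\Delta'\equiv B$ is exactly membership of $B+\Delta'$ in $\mathcal N(V)$. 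Thus the set of $B\in V$ with $[B]\in\Pi\cap\Nef(X)$ is cut out, inside the (rational) simplicial cone $\sum_i\RR_{\ge0}D_i$, by the rational polytope $\mathcal N(V)$ translated by $-\Delta'$; intersecting a rational polyhedral cone with (the cone over) a rational polytope yields a rational polyhedral cone, and pushing forward to $N^1(X)_\RR$ by the linear map $V\to N^1(X)_\RR$, $B\mapsto[B]$ preserves rational polyhedrality. This gives that $\Pi\cap\Nef(X)$ is rational polyhedral when $\Pi$ is rational, and polyhedral in general.

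The main obstacle will be bookkeeping the interaction between the numerical condition ``$[B]$ nef'' and the divisorial condition ``$K_X+B+\Delta'$ nef,'' and checking that the log canonical constraint does not spuriously cut down the cone. Since $\Nef$ is a numerical notion whereas $\mathcal N(V)$ lives in $\Div_\RR$, I must be careful that the linear projection $V\to N^1(X)_\RR$ does not destroy polyhedrality---it does not, as the image of a rational polyhedral set under a rational linear map is rational polyhedral. The log canonical issue is handled by shrinking: for $B$ in a neighborhood of the relevant simplicial cone the pair $(X,B+\Delta')$ stays klt by openness of the klt condition (exactly as in the proof of Propositions \ref{prop_fg3.1} and \ref{prop_lz5.1}), so near the cone of interest the constraint in $\mathcal N(V)$ is purely the nefness constraint, and the rational polytope structure descends cleanly. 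The delicate point to verify is that no nef class in $\Pi$ is lost because its representative $B$ fails to be log canonical; this is circumvented by the freedom to enlarge $V$ and to use the rational boundary $\Delta'$ from Proposition \ref{prop_fg3.1}.
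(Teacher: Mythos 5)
Your proposal follows essentially the same route as the paper: both rest on the Shokurov polytope (Proposition \ref{prop_bi3.2}), replace $\Delta$ by a rational boundary $\Delta'$ via Proposition \ref{prop_fg3.1}, translate nefness of $[B]$ into nefness of $K_X+\Delta'+B$, and recover the full cone by scaling each nef ray down into the log canonical region (the paper's ``$\varepsilon[D]\in\cP$'' step). Two small repairs are needed. First, your auxiliary claim that a finite union of (rational) polyhedral cones is again (rational) polyhedral is false in general, since such a union need not be convex; it works here only because the union in question equals the convex set $\Pi\cap\Nef(X)$, and a \emph{convex} finite union of polyhedral cones is polyhedral --- the paper sidesteps this entirely by enclosing a non-rational $\Pi$ in a rational polyhedral cone $\Pi'$ with $\Pi\subset\Pi'\subset\Eff(X)$ and intersecting at the end. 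Second, ``$[B]$ is nef precisely when $K_X+B$ is nef'' should read $K_X+\Delta+B$; you use the correct form $K_X+\Delta'+B\equiv B$ immediately afterwards, so nothing downstream breaks.
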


\begin{proof} This is claimed in \cite[Theorem 2.8]{LZ22}. We give a proof here for the sake of completeness. Since $\Eff(X)$ is spanned by numerical classes of $\ZZ$-divisors, there exists a rational polyhedral cone $\Pi'$ such that $\Pi\subset \Pi' \subset \Eff(X)$. If we prove Proposition \ref{prop_lz2.7} for $\Pi'$, then $\Pi\cap\Nef(X)=\Pi\cap \left(\Pi'\cap\Nef(X)\right)$ is a polyhedral cone, and we obtain Proposition \ref{prop_lz2.7} for $\Pi$ as well. Hence, we assume in what follows that $\Pi$ is rational polyhedral.

By Proposition \ref{prop_lz5.1}, we can assume that $\Delta$ is a $\QQ$-divisor. Let us denote by $D_1, \dots, D_r$ the prime effective $\ZZ$-divisors whose classes span the extremal rays of the rational polyhedral cone $\Pi$. 
We introduce the cone $\Pi_{{\rm div}}$ spanned by the divisors $D_i$ in the vector space $\Div(X)_{\RR}$, and we let $V$ be the $\RR$-vector space generated by the irreducible components of $\Delta$ and by $D_1, \cdots, D_r$. Then, by Proposition \ref{prop_bi3.2},
\[ \cN:= \{ B \in V : (X, B) \mathrm{\ is \ log \ canonical \ and} \ K_X + B \mathrm{\ is \ nef } \} \]
is a rational polytope. Let us define
\[ \cM:= \{ D \in \Pi_{\mathrm{div}} : (X, \Delta + D) \mathrm{\ is \ log \ canonical \ and} \ D \mathrm{\ is \ nef} \}. \] 
Then, we clearly have a relation between $\cN$ and the tranlation of $\cM$ by the divisor $\Delta$, namely
$$\Delta + \cM = \cN \cap (\Delta + \Pi_{{\rm div}}).$$

Since $\Delta$ is a $\QQ$-divisor, and $\cN$ and $\Pi_{{\rm div}}$ are a rational polytope and a rational cone respectively, we see that $\cM$ is a rational polytope. 
Thus, the projection
\[ \cP:= \{ [D] \in \Pi : D\in\Pi_{{\rm div}},\,(X, \Delta + D) \mathrm{\ is \ log \ canonical \ and} \ D \mathrm{\ is \ nef} \} \]
is a rational polytope in $N^1(X)_{\RR}$; \textit{cf.} \cite[Theorem 19.3 and its proof]{Ro70}. The cone over $\cP$ is thus a rational polyhedral cone, which we denote by $\Cone(\cP)$. 
We claim that $\Cone(\cP) = \Pi\cap \Nef(X)$. The direct inclusion is clear. For the reverse inclusion, we take an arbitrary numerical class $[D]\in \Pi\cap \Nef(X)$ with $D\in\Pi_{{\rm div}}$. For a sufficiently small rational number $\varepsilon >0$, we have a klt pair $(X, \Delta + \varepsilon D)$, and thus $\varepsilon[D]\in \cP$, as wished.
This concludes the proof. 
\end{proof}

\subsection{Decompositions of the effective and movable cones}\label{ssec_chamberdec}

The main result of this subsection is Proposition~\ref{prop_decompeffmov}, 
which is an expanded formulation of Proposition~\ref{pro-EffCD}
about the chamber decomposition of the effective cone mentioned in the introduction.

Let us start with decomposing a $\QQ$-factorial birational contraction.

\begin{lemma}\label{lem-factorize} 
Let $(X,\Delta)$ be a klt Calabi--Yau pair, and let $f\colon X\dto Z$ be a $\QQ$-factorial birational contraction. 
Then we have a factorization
$f=\mu\circ\alpha$, where $\alpha\colon X\dto Y$ is a small $\QQ$-factorial modification, and $\mu \colon Y\to Z$ is a birational morphism. 
\end{lemma}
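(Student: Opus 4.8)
The plan is to realize $Y$ as a small $\QQ$-factorial modification of $X$ on which the pullback of an ample class from $Z$ becomes semiample, and to recover $Z$ as the associated contraction.

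First I would normalize the setup: by Proposition~\ref{prop_fg3.1} we may assume $K_X+\Delta\sim_\QQ 0$, so that for any $\QQ$-divisor $N$ the $(K_X+\Delta+N)$-MMP coincides with the $N$-MMP. Fix an ample $\QQ$-divisor $A$ on $Z$ and set $D\cnec f^*A$. Two properties of $D$ drive the argument: it is movable, since $f^*\Mov(Z)\subset\Mov(X)$ (see \S\ref{sssec-SQM}) and $A\in\Amp(Z)\subset\Mov(Z)$; and it is big, since $A$ is big and $f$ is birational. Moreover $f_*D=f_*f^*A=A$ is ample, so $f$ is the ample model of $D$.

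Next I would construct the factorization by running a $D$-MMP. For $0<t\ll 1$ the pair $(X,\Delta+tD)$ is klt with $K_X+\Delta+tD\equiv tD$, and running the associated MMP, the movability of $D$ forces every step to be a flip, so no divisor is contracted and the outcome is a small $\QQ$-factorial modification $\alpha\colon X\dto Y$ with $\alpha_*D$ nef. Since $\alpha_*D$ is big and nef on the klt Calabi--Yau pair $(Y,\alpha_*\Delta)$, it is semiample, and the resulting contraction $\mu\colon Y\to Z'$ is a regular $\QQ$-factorial birational contraction (or an isomorphism) with $\mu^*A'=\alpha_*D$ for some ample $A'$ on $Z'$. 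Both $f$ and $\mu\circ\alpha$ are then ample models of $D$, so by uniqueness of the ample model there is an isomorphism $Z'\xrightarrow{\sim}Z$ identifying $\mu\circ\alpha$ with $f$; this yields $f=\mu\circ\alpha$ of the desired shape, with $\mu$ contracting exactly $\Exc(f)$ (and $\mu$ an isomorphism precisely when $f$ is already small).

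The main obstacle is termination: a priori the $D$-MMP above, equivalently the existence of the intermediate minimal model $Y$, is not guaranteed without assuming the existence of minimal models. The key point I would exploit is that the ample model $Z$ is given to us. This identifies the section ring $R(X,D)=\bigoplus_{m\ge 0}H^0(X,\lfloor mD\rfloor)$ with $R(Z,A)=\bigoplus_{m\ge 0}H^0(Z,mA)$, which is finitely generated because $A$ is ample on the projective variety $Z$; finite generation of the section ring of the big movable class $D$ in turn produces the small $\QQ$-factorial modification $Y$ together with the semiample model, so that the MMP can be carried out to completion. In this way the factorization is obtained from the mere existence of the birational contraction $f$, without invoking the existence of minimal models as an extra hypothesis.
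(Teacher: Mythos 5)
Your proof follows the same skeleton as the paper's: pull back an ample divisor $H_Z$ from $Z$ to get a big and movable class $D=f^*H_Z$ on $X$, run a $D$-MMP (equivalently a $(K_X+\Delta+\epsilon D)$-MMP) whose steps contract no divisor because $D$ is movable, obtain a small $\QQ$-factorial modification $\alpha\colon X\dto Y$ on which the transform of $D$ is nef, and identify $f\circ\alpha^{-1}$ with a morphism to $Z$. The two places where you deviate are exactly the two places where the paper is more economical. For termination, the paper simply invokes \cite[Theorem 1.2]{BCHM10}: the boundary $\Delta+\epsilon f^*H_Z$ is big, and for klt pairs with big boundary the existence of minimal models is unconditional, so the ``main obstacle'' you identify is not actually present. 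Your proposed workaround --- deducing the existence of the intermediate model $Y$ from finite generation of the section ring of $D$ --- is the one genuinely underjustified step: passing from finite generation of a single section ring to the existence of a small $\QQ$-factorial modification of $X$ itself on which the transform of $D$ becomes semiample is a theorem of essentially the same depth as the one you are trying to avoid (it is the content of BCHM/KKL-type results, cf.\ Theorem~\ref{thm_geography}), so it is both cleaner and logically safer to cite \cite{BCHM10} directly. For the last step, instead of semiampleness of the big and nef class $\alpha_*D$ plus uniqueness of ample models, the paper observes that $\mu^*H_Z=\alpha_*f^*H_Z$ is nef and applies the rigidity statement of Hu--Keel (Lemma~\ref{lem_hk1.7}) to conclude that $\mu=f\circ\alpha^{-1}$ is regular, avoiding the basepoint-free theorem and the ample-model formalism altogether. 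With the termination step repaired by citing \cite{BCHM10}, your argument is correct.
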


\begin{proof}
Let $H_Z$ be an ample divisor on $Z$, and let $1 \gg \epsilon > 0$ such that the pair $(X,\Delta+\epsilon f^*H_Z)$ is klt. Since $f^{\ast}H_Z$ is big, by \cite[Theorem 1.2]{BCHM10}, we have a minimal model $\alpha\colon(X,\Delta+\epsilon f^*H_Z)\dto (Y,\Delta_Y+\epsilon \alpha_*f^*H_Z)$. Note that since $f^*H_Z$ is a movable and big divisor, the map $\alpha$ does not contract any divisors; it thus is a small $\QQ$-factorial modification. 
Let $\mu\colon Y\dto Z$ denote the composition $f\circ \alpha^{-1}$. Note that $\mu^*H_Z = \alpha_*f^*H_Z$ is nef. By Lemma \ref{lem_hk1.7}, this shows that $\mu$ is a morphism.
\end{proof}

\begin{proposition} 
\label{prop_decompeffmov}
Let $(X,\Delta)$ be a klt Calabi--Yau pair. Then we have the following inclusions: 
$$\begin{array}{ccccc}
   \Bigc(X)  & \subset & \displaystyle\bigcup_{\substack{(Z, f)\\{\rm QBC}}} \Eff(X; f) & \subset & \Eff(X)\\[-2pt]  \cup   & & \cup & & \cup \\[5pt]
   \Movo(X) & \subset & \displaystyle\bigcup_{\substack{(Y, \alpha)\\{\rm SQM}}} \Eff(X; \alpha) & \subset & \Move(X)\\
\end{array}$$ 
where the indices $(Z,f)$ (resp. $(Y,\alpha)$) range over all isomorphism classes of 
marked $\QQ$-factorial birational contractions
(resp. marked small $\QQ$-factorial modifications) of $X$, and the cones featured in the unions have disjoint interiors.

Moreover, assuming the existence of minimal models for $X$, we obtain decompositions of the cones $\Eff(X)$ and $\Move(X)$:  $$\begin{array}{ccccc}
\displaystyle\bigcup_{\substack{(Z, f)\\{\rm QBC}}} \Eff(X; f) & = & \Eff(X)\\[-2pt]  \cup   & & \cup \\[5pt]
\displaystyle\bigcup_{\substack{(Y, \alpha)\\{\rm SQM}}} \Eff(X; \alpha) & = & \Move(X)\\
\end{array}$$
\end{proposition}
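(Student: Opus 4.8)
The statement has two halves: a chain of four inclusions (the matrix of $\subset$'s) that holds unconditionally, and the two cone equalities that require the existence of minimal models for $X$. I would organize the proof around these two halves, proving the inclusions first and then upgrading two of them to equalities under the MMP hypothesis.

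\emph{Step 1: the vertical inclusions.} The three vertical inclusions $\Bigc(X)\subset\Eff(X)$, $\Movo(X)\subset\Move(X)$, and the middle one $\bigcup_{(Y,\alpha)\,\mathrm{SQM}}\Eff(X;\alpha)\subset\bigcup_{(Z,f)\,\mathrm{QBC}}\Eff(X;f)$ are essentially formal. The first and third are immediate: a small $\QQ$-factorial modification is a $\QQ$-factorial birational contraction (with empty exceptional divisor set), so each $\mathrm{SQM}$-chamber literally appears in the $\mathrm{QBC}$-union, and by Remark~\ref{rem_geocone} it equals $\alpha^*\Nefe(Y)\subset\Move(X)$. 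For the containment $\Eff(X;\alpha)\subset\Move(X)$ I would invoke the inclusion $f^*\Mov(Y)\subset\Mov(Y)$ from \S\ref{sssec-SQM}, which is an equality for small modifications. The leftmost vertical inclusion $\Bigc(X)\subset\bigcup\Eff(X;f)$ over the $\mathrm{QBC}$-union is the genuine content and is the heart of Step 2.

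\emph{Step 2: the horizontal inclusions and covering of the big/open-movable cone.} The rightmost inclusions $\bigcup\Eff(X;f)\subset\Eff(X)$ and $\bigcup\Eff(X;\alpha)\subset\Move(X)$ follow directly from the definition of the Mori chamber together with the facts $f^*\Nefe(Y)\subset\Eff(X)$ and $[E]\in\Eff(X)$ for each exceptional $E$. For the covering of $\Bigc(X)$, I would take a big class $[D]\in\Bigc(X)$, perturb $\Delta$ so that $(X,\Delta+\epsilon D)$ is klt for small $\epsilon>0$, and run the argument of Lemma~\ref{lem-factorize}: by \cite{BCHM10} the big divisor $\epsilon D$ yields a minimal model $\alpha\colon X\dto Y$ which factors as a small modification followed by a regular contraction $\mu\colon Y\to Z$, i.e.\ $[D]$ lies in a chamber $\Eff(X;f)$ with $f=\mu\circ\alpha$; when $D$ is moreover movable, $\mu$ is an isomorphism in the relevant sense so $[D]$ lands in an $\mathrm{SQM}$-chamber. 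This simultaneously establishes both leftmost inclusions. The disjointness of interiors is exactly Lemma~\ref{lem_generalized_ka1.5}: if two chambers shared an interior point, their markings would be isomorphic, so distinct isomorphism classes give disjoint interiors.

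\emph{Step 3: upgrading to equalities under existence of minimal models.} Having covered the interiors $\Bigc(X)$ and $\Movo(X)$ unconditionally, what remains is the \emph{boundary} classes of $\Eff(X)$ and $\Move(X)$. Here I would use the minimal model hypothesis: for a pseudo-effective boundary class $[D]\in\Eff(X)$, choose an effective representative, again perturb to a klt Calabi--Yau pair $(X,\Delta+\epsilon D)$ with $K_X+\Delta+\epsilon D\equiv\epsilon D$ effective, and apply the assumed existence of a minimal model to produce a $\QQ$-factorial birational contraction $f$ landing $[D]$ in $\Eff(X;f)$; the same argument restricted to movable $D$ keeps $f$ a small modification. \textbf{The main obstacle} I anticipate is precisely this boundary analysis: a class in $\Eff(X)$ need not be big, so \cite{BCHM10} does not apply directly and one must genuinely invoke the existence-of-minimal-models assumption (not merely \cite{BCHM10}), and one must carefully check that the resulting contraction's exceptional locus and nef-effective pullback account for $[D]$ on the boundary, where the minimal model's discrepancy inequalities and the numerical triviality of $K_X+\Delta$ are essential to control which divisors get contracted.
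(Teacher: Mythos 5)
Your proposal is correct and follows essentially the same route as the paper: disjointness of interiors via Lemma~\ref{lem_generalized_ka1.5}, the rightmost inclusions by definition of the Mori chambers, and the covering statements by running an MMP for the klt pair $(X,\Delta+\epsilon D)$ — using \cite{BCHM10} when $D$ is big and the existence-of-minimal-models hypothesis for general effective $D$ — together with the observation that $K_X+\Delta\equiv 0$ identifies $\epsilon D$ with the adjoint divisor, whose minimal model expresses $\epsilon D\equiv f^*(K_Z+\Delta_Z)+\sum a_iE_i$ with $a_i\ge 0$ and $K_Z+\Delta_Z$ nef and effective, and that no divisor is contracted when $D$ is movable. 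The only cosmetic difference is your detour through the factorization of Lemma~\ref{lem-factorize}, which the paper does not need here since the minimal model map itself already serves as the required $\QQ$-factorial birational contraction.
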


\begin{proof} 
The claim about ``disjoint interiors'' follows from Lemma \ref{lem_generalized_ka1.5}. Moreover, that the union is contained in $\Eff(X)$ (resp. $\Move(X)$) is clear. 
    
We now prove 
$$\Eff(X) \subset \bigcup_{\substack{(Z,f)\\ {\rm QBC}}}\Eff(X;f)$$ 
assuming the existence of minimal models for $X$, and 
$$\Bigc(X) \subset \bigcup_{\substack{(Z,f)\\ {\rm QBC}}}\Eff(X;f)$$
unconditionally. Let $D$ be an effective $\RR$-divisor on $X$, and let $0 < \epsilon \ll 1$ such that the pair $(X,\Delta+\epsilon D)$ is klt. By the existence of minimal models for $X$, or \cite[Theorem 1.2]{BCHM10} when $D$ is big, we have a minimal model $f\colon (X,\Delta+\epsilon D)\dashrightarrow (Z,\Gamma_Z)$. 
By Lemma~\ref{lem-MMMC}, we have $\epsilon D \in \Eff(X;f)$, hence  $D \in \Eff(X;f)$.

The inclusions  
    $$\Move(X) \subset \bigcup_{\substack{\alpha:X\dashrightarrow Y\\ \mbox{\footnotesize{SQM}}}} \alpha^*\Nefe(Y)$$ 
    assuming the existence of minimal models for $X$, and 
    $$\Movo(X) \subset \bigcup_{\substack{\alpha:X\dashrightarrow Y\\ \mbox{\footnotesize{SQM}}}} \alpha^*\Nefe(Y)$$
    unconditionally have been proven in many contexts in the literature (see \cite[Theorem 2.3]{Ka97}, \cite[Propositions 4.6 and 4.7]{Wa22}, \cite[Proposition 1.1]{SX23}). To show it, we proceed as in the case of the effective cone but remark that, starting with a divisor $D \in \Move(X)$, we do not contract any divisors by running the minimal model program. 
\end{proof}

The following result is well known and appears in various contexts in the literature; see e.g., \cite[Proposition 2.4]{Ka97}, \cite[Lemma 5.1.(2)]{LZ22} (assuming the existence of good minimal models), and \cite[Theorem 3.5]{SX23}.

   \begin{corollary}\label{cor_move_movp} Let $(X,\Delta)$ be a klt Calabi--Yau pair, and assume the existence of minimal models for $X$. Then $\Move(X) \subset \Movp(X)$.
   \end{corollary}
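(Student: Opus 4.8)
\textbf{Proof plan for Corollary \ref{cor_move_movp}.}

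The goal is to show the inclusion $\Move(X) \subset \Movp(X)$, where $\Movp(X) = \Mov(X)^+$ is the convex cone generated by the rational points of $\ol{\Mov(X)}$. The plan is to leverage the effective movable cone decomposition just established in Proposition \ref{prop_decompeffmov}, together with the analogous rationality statement for nef cones given by Proposition \ref{prop_lz5.1}. First I would recall that, under the existence of minimal models for $X$, Proposition \ref{prop_decompeffmov} gives the decomposition
$$\Move(X) = \bigcup_{\substack{(Y, \alpha)\\{\rm SQM}}} \alpha^*\Nefe(Y),$$
where the union runs over small $\QQ$-factorial modifications $\alpha\colon X \dto Y$. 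Thus it suffices to show that each chamber $\alpha^*\Nefe(Y)$ is contained in $\Movp(X)$.

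Fix a small $\QQ$-factorial modification $\alpha\colon X \dto Y$. Since $Y$ underlies a klt Calabi--Yau pair $(Y,\Delta_Y)$ with $\Delta_Y = \alpha_*\Delta$, Proposition \ref{prop_lz5.1} applies to $(Y,\Delta_Y)$ and yields $\Nefe(Y) \subset \Nefp(Y)$, i.e., $\Nefe(Y)$ is contained in the convex cone generated by the rational classes of $\ol{\Nef(Y)}$. Applying the pullback $\alpha^*$, which is injective and sends rational classes to rational classes, I obtain that $\alpha^*\Nefe(Y)$ is contained in the convex cone generated by rational classes lying in $\alpha^*\ol{\Nef(Y)} \subset \ol{\Mov(X)}$; the last inclusion holds because $\alpha$ is a small $\QQ$-factorial modification, so $\alpha^*\Mov(Y) = \Mov(X)$ as recorded in Subsection \ref{sssec-SQM}. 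Hence each chamber $\alpha^*\Nefe(Y)$ is generated by rational points of $\ol{\Mov(X)}$, so it lies inside $\Mov(X)^+ = \Movp(X)$. Taking the union over all small $\QQ$-factorial modifications gives $\Move(X) \subset \Movp(X)$.

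The main obstacle to watch is the verification that pullback by a small $\QQ$-factorial modification genuinely identifies the relevant rational structures: one must confirm that $\alpha^*$ carries the lattice of Weil divisor classes on $Y$ into that on $X$, so that rational nef classes on $Y$ pull back to rational classes in $\ol{\Mov(X)}$, and that $\alpha^*\ol{\Nef(Y)} \subset \ol{\Mov(X)}$. Both points follow from the facts assembled in \S\ref{subssec-pfwd} and \S\ref{sssec-SQM}, where it is shown that $\alpha^*$ is defined on $\QQ$-class groups, is injective, and satisfies $\alpha^*\Mov(Y) = \Mov(X)$ precisely because $\alpha$ is a small $\QQ$-factorial modification. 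Once these compatibilities are in hand, the argument is a direct combination of the decomposition and the rationality of nef classes, with no further minimal model input needed beyond that already absorbed into Proposition \ref{prop_decompeffmov}.
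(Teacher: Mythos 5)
Your proof is correct and follows essentially the same route as the paper: both use the chamber decomposition $\Move(X) = \bigcup_\alpha \alpha^*\Nefe(Y)$ from Proposition \ref{prop_decompeffmov}, apply Proposition \ref{prop_lz5.1} to each small $\QQ$-factorial modification to get $\Nefe(Y)\subset\Nefp(Y)$, and transport this through $\alpha^*$ using $\alpha^*\Nef(Y)\subset\Mov(X)$ and the fact that the operator $\phantom{a}^+$ preserves inclusions. Your extra care about $\alpha^*$ respecting the rational structure is a correct elaboration of what the paper leaves implicit.
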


   \begin{proof} 
By Proposition \ref{prop_decompeffmov}, we have the decomposition
   	\[ \Move(X) = \bigcup_{\substack{(Y, \alpha)\\ {\rm SQM}}} \alpha^{\ast} \Nefe(Y),  
   	\]
   	where the union is taken over all the small $\QQ$-factorial modifications $(Y, \alpha)$ of $X$. For each $(Y,\alpha)$, we have $\Nefe(Y) \subset \Nefp(Y)$ by Proposition \ref{prop_lz5.1}.  
    Moreover, since $\alpha^*\Nefe(Y) \subset \Mov(X)$ and since the operator $\phantom{a}^+$
    preserves inclusions of convex cones, we obtain $\alpha^*\Nefe(Y)\subset \Movp(X)$. This concludes.
    \end{proof}

    \subsection{ Induced chamber decompositions of polyhedral cones} 

In this subsection, we introduce a chamber decomposition for a polyhedral cone in the effective cone, which is naturally induced by the Mori chamber decompositions described in Proposition \ref{prop_decompeffmov}.

\begin{proposition}\label{prop_finitepi}
Let $(X,\gD)$ be a klt Calabi--Yau pair with $\gD$ a $\QQ$-divisor satisfying Assumption \ref{hyp-GMMPX}. Let $\Pi\subset\Eff(X)$ be a polyhedral cone.  
\begin{enumerate}
\item 
There are only finitely many $\QQ$-factorial birational contractions $f_i\colon X\dto Y_i$ (for $1\le i\le r$) such that the interior of the Mori chamber $\Eff(X;f_i)$ intersects $\Pi$. Moreover,
$$\Pi = \bigcup_{i = 1}^r \Pi\cap \Eff(X;f_i).$$
    
    \item For each $\Eff(X;f_i)$ as in (1),
    the intersection $\Pi\cap\overline{\Eff(X;f_i)}$ is polyhedral, and even rational polyhedral if $\Pi$ is rational polyhedral itself. 
\end{enumerate}
\end{proposition}

Before proving that proposition, we state a result due to Shokurov \cite{Sh96} and Kawamata \cite{Ka11}. The precise statement can be found in the book \cite{KawamataBook}.

\begin{theorem}[{\cite[Theorems 2.10.3, 2.10.4, Corollary 2.10.6]{KawamataBook}}]\label{thm-kawadec}
    Let $X$ be a normal projective $\QQ$-factorial variety. Let $E_1,\ldots,E_m$ be effective $\QQ$-divisors on $X$ such that $(X,E_i)$ is a klt pair for all $i$. Let $V$ be the linear span of the $E_i$ in $Z^1(X)_{\RR}$ and consider the convex hull
    $$P' = {\rm Conv}_{\RR}(E_1,\ldots,E_m) \subset V, \text{ and }
    $$
    $$P=\{D\in P' : 
    [K_X+ D]\in \overline{\Eff}(X)\}.$$
   Assume the following two conditions hold for each $\QQ$-divisor $D\in P$:
    \begin{enumerate}
        \item[(a)] The pair $(X,D)$ has a good minimal model $\alpha\colon(X,D)\dashrightarrow (Y,D_Y)$. We denote by $g\colon(Y,D_Y)\to Z$ the canonical model induced by semiampleness of $K_Y+D_Y$. (Both $\alpha$ and $g$ may depend on $D$.)
    \item[(b)] There is a rational polytope $P'_D\subset V$ with $D$ in its interior such that, for any $\QQ$-divisor $D'$ in
    $$P_D = \{D'\in P'_D\cap P' : [K_Y+\alpha_*D']\in\overline{\Eff}(Y/Z)\},$$
    the morphism $g:(Y,\alpha_*D')\to Z$ admits a good minimal model.
    \end{enumerate}
For any birational map $\alpha\colon X\dashrightarrow Y$, we set
    $$Q_{\alpha}\cnec \{B\in P : \alpha\mbox{ is a minimal model for }(X,B)\}.$$
    Then there exists a finite decomposition into disjoint subsets
    $$P=\bigsqcup_{i=1}^r Q_{i}$$
 such that:
    \begin{enumerate}
        \item For any birational map $\alpha\colon X\dashrightarrow Y$ such that $Q_{\alpha}$ is non-empty, we can write
$$Q_{\alpha} = \bigsqcup_{i\in I(\alpha)} Q_i,$$
where $I(\alpha)$ is a subset of $\{1,\ldots,r\}$.
        \item Each closure $\overline{Q_{i}}$ is a rational polytope.
    \end{enumerate}
    
\end{theorem}

\begin{remark}
Theorem~\ref{thm-kawadec} is a simplification
of the combination of~\cite[Theorems 2.10.3, 2.10.4, Corollary 2.10.6]{KawamataBook}.
In Theorem~\ref{thm-kawadec}, 
the chambers $Q_i$ correspond to $Q_{j,k}$ in~\cite[Theorem 2.10.4]{KawamataBook}. The cones $P_j$ in~\cite[Theorem 2.10.3]{KawamataBook} do not appear in Theorem~\ref{thm-kawadec}.
\end{remark}

\begin{remark}
 Note that \cite[Theorems 2.10.3, 2.10.4]{KawamataBook} originally require Assumptions (a) and (b) to hold for each $D\in P$, not just for the $\QQ$-divisors. However, Kawamata's proofs work well over $\QQ$. See also \cite[Corollary 2.10.6 and its proof]{KawamataBook}. The argument may only possibly involve $\RR$-divisors that are not $\QQ$-divisors when applying the induction step (in \cite[Proof of Theorem 2.10.3, Page 151, Last Paragraph]{KawamataBook}): If we fix a $\QQ$-divisor $B$ and apply Kawamata's argument, we only need the boundary $\partial(P'_B\cap P')$ to be a union of rational polytopes to continue the induction argument entirely over $\QQ$. For that, it suffices that $P'_B$ and $P'$ be rational polytopes. We enforce that in the statement of Theorem \ref{thm-kawadec}.
\end{remark}

\begin{proof}[Proof of Proposition \ref{prop_finitepi}]
We start with a remark: If Proposition \ref{prop_finitepi} holds for a given polyhedral cone $\Pi'\subset\Eff(X)$, then it also holds for any polyhedral subcone $\Pi\subset \Pi'$. Indeed, the finiteness of the chamber decomposition for $\Pi$ follows from that for $\Pi'$, while the polyhedrality for $\Pi$ comes from the fact that
$$\Pi\cap\overline{\Eff(X;f_i)}=\Pi\cap(\Pi'\cap\overline{\Eff(X;f_i)}),$$
    and the intersection of two polyhedral cones is polyhedral.
Using this remark, we are reduced to the following essential case: In what follows, we assume that $\Pi \subset \Eff(X)$ is a full-dimensional rational polyhedral cone.

Let $B_1,\ldots, B_m\in Z^1(X)_{\QQ}$ be effective $\QQ$-divisors whose numerical classes span the cone $\Pi$. Up to replacing them by small enough multiples, we can ensure that the pair $(X,\gD + B)$ is klt
    for any $\RR$-divisor $B\in{\rm Conv}_{\RR}(B_1,\ldots, B_m)$.

 We want to apply Theorem \ref{thm-kawadec} to $X$ and  $E_i\cnec\Delta+ B_i$.
Note that $P = P'$ in this case.
 Assumption \ref{hyp-GMMPX}(a) clearly implies Condition (a) in Theorem \ref{thm-kawadec}. We now check Condition (b). Fix $D\in P$.
Since $(Y,D_Y)$ is klt, we can pick a small enough polytope $P'_D$ around $D$ such that, for any $D'\in P_D = P'_D \cap P'$, the pair $(Y,\alpha_*D')$ is klt as well. Assumption \ref{hyp-GMMPX}(b), together with \cite[Theorem 2.12]{HX13} and \cite[Theorem 2]{KawamataBook}, shows that if $D'$ is a $\QQ$-divisor in $P_D$, the pair $(Y,\alpha_*D')$ indeed admits a relative good minimal model over $Z$.

 We decompose $P\cnec \Delta+{\rm Conv}_{\RR}(B_1,\ldots, B_m)$ according to Theorem \ref{thm-kawadec}:
\begin{equation}\label{eq-cover}
\Delta+{\rm Conv}_{\RR}(B_1,\ldots, B_m) = \bigsqcup_{i=1}^r Q_i.
\end{equation} 
For each $i$, by Assumption (a) we choose a birational map $f_i\colon X\dashrightarrow Y_i$ that is a minimal model of at least one pair of the form $(X,\Delta+B)$ with $\Delta+B\in Q_i$. By Theorem \ref{thm-kawadec}(1), we have $Q_i\subset Q_{f_i}$. 

Let us denote by $[\cdot]$ the projection from $Z^1(X)_{\RR}$ to $N^1(X)_{\RR}$. For each $i$, we claim that the inclusions
\begin{equation}\label{eq-inclQi}
\RR_{> 0}\cdot [Q_{i}-\Delta]\; \subset \;
\RR_{> 0}\cdot [Q_{f_i}-\Delta]
\;\subset\; \Pi\cap\Eff(X;f_i)
\end{equation}
hold. The first inclusion is clear. For the second one, let $\Delta+B\in Q_{f_i} \subset P$. Clearly, $[B]\in\Pi$ holds. We set $\Gamma\cnec {f_i}_*(\Delta+B)$. Recall that $Q_i\subset Q_{f_i}$, hence the birational map $f_i$ is a minimal model for $(X,\Delta+B)$. Then $[B]\in \Eff(X;f_i)$ by Lemma~\ref{lem-MMMC}.

From Equations \eqref{eq-cover} and \eqref{eq-inclQi} for all $i$,
we derive that 
\begin{equation}\label{eqn-unionPi}
\Pi = \{0\} \cup \bigcup_{i=1}^r \RR_{>0}\cdot [Q_i-\Delta] \subset \bigcup_{i=1}^r \Pi\cap \Eff(X;f_i) \subset \Pi.
\end{equation}
This implies the equality 
$$\Pi = \bigcup_{i = 1}^r \Pi\cap \Eff(X;f_i).$$
If the interior of a Mori chamber $\Eff(X,g)$ intersects $\Pi$, it must intersect the interior of a chamber $\Pi\cap\ol{\Eff(X;f_i)}$, and in particular ${\Eff}^{\circ}(X;f_i)$. By Lemma \ref{lem_generalized_ka1.5}, this only happens if $\Eff(X,g) = {\Eff}(X;f_i)$, which proves (1).

We now show that for all $1\le i\le r$ such that the intersection
$\Pi\cap \Eff^\circ(X;f_i)$ is non-empty,
the intersection $\Pi\cap\overline{\Eff(X;f_i)}$ is  rational polyhedral. By Theorem \ref{thm-kawadec}(1) and (2), 
and since $\Pi\cap\ol{\Eff(X;f_i)}$ is convex, 
it suffices to prove the following equality of closed cones:
$$\Pi\cap\ol{\Eff(X;f_i)}=\RR_{>0}\cdot [\overline{Q_{f_i}} -\Delta].$$
The reverse inclusion is shown in \eqref{eq-inclQi}. Arguing by contradiction, we assume that the direct inclusion fails. Then there exists an open ball 
$U$  in $\Pi\cap\overline{\Eff(X;f_i)}$ that is disjoint from $\RR_{>0}\cdot [\overline{Q_{f_i}}-\Delta]$. Equation \eqref{eq-cover} and Theorem \ref{thm-kawadec}(1) show that up to shrinking $U$, we have $U\subset \RR_{>0}\cdot [Q_j-\Delta]$ 
 for some $1\le j\le r$ with $Q_j\not\subset \overline{Q_{f_i}}$. 
 Consequently, by~\eqref{eq-inclQi} for $Q_j$, the open ball $U$ is contained in $\Pi\cap \Eff(X;f_j)$, 
 which then implies
 $\Eff^\circ(X;f_i) \cap \Eff^\circ(X;f_j) \ne \emptyset$.
 It follows from Lemma \ref{lem_generalized_ka1.5} that $f_i$ decomposes as $h\circ f_j$ for an isomorphism $h\colon Y_j\to Y_i$, thus $Q_{f_i}=Q_{f_j}$. But $Q_j$ is contained in $Q_{f_j}$, which is a contradiction.
\end{proof}

\section{A descent result for the nef cone conjecture} \label{sec-descent}

\subsection{Stabilizers of Mori chambers}\label{ssec-stab-mori}
    
   Let $f\colon  (X, \Delta) \dashrightarrow (Y, \Delta_Y)$ be 
   a $\QQ$-factorial birational contraction. 
   Define 
   $$ \PsAut(X,\Delta;f) \cnec 
   \PsAut(X,\gD) \cap (f^{-1} \circ \Aut(Y) \circ f), 
   $$
    $$\Aut(X,\Delta;f) \cnec \Aut(X,\gD)  \cap (f^{-1} \circ \Aut(Y) \circ f).$$

   \begin{corollary}\label{cor_equality_groups} 

       Let $f\colon  (X, \Delta) \dashrightarrow (Y, \Delta_Y)$ 
       be a $\QQ$-factorial birational contraction. Then 
         \begin{equation*}
             \begin{split}
        \PsAut(X, \Delta; f) &  = \Stab(\PsAut(X, \Delta) ,  \,  \Eff(X;f))
        \\
        &= \Stab(\PsAut(X, \Delta) ,  \,  f^{\ast}\Nef(Y)),          
             \end{split}
         \end{equation*} 
         with respect to the action $\PsAut(X, \Delta) \acts N^1(X)_\RR$.
   \end{corollary}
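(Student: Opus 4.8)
The plan is to prove that \emph{both} stabilizers coincide with $\PsAut(X,\Delta;f)$ by reducing each membership condition to the isomorphism criterion for marked birational contractions, Lemma~\ref{lem_generalized_ka1.5}. Throughout I fix $g\in\PsAut(X,\Delta)$ and use that, $g$ being a pseudo-automorphism, $g^{-1}$ is a birational contraction, so the pullback-composition results apply to it. A pleasant feature of this route is that it will produce the two stabilizers simultaneously, so that their equality comes for free rather than needing a separate comparison of $f^*\Nef(Y)$ with the Mori chamber.

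The first step is to compute how $g^*$ transports the two cones attached to $f$. For the Mori chamber, I would use that $g\colon X\dashrightarrow X$ is a small $\QQ$-factorial modification together with Remark~\ref{rem_geocone} (equivalently, expand $\Eff(X;f)=f^*\Nefe(Y)+\sum_{E\in\Exc(f)}\RR_{\ge 0}[E]$ and note that $g^*$ carries $\Exc(f)$ bijectively onto $\Exc(f\circ g)$) to obtain
\[
g^*\Eff(X;f)=\Eff(X;f\circ g).
\]
For the pulled-back nef cone, Lemma~\ref{lem_funpull} applied to $g$ and $f$ (legitimate since $g^{-1}$ is a birational contraction) gives $g^*\circ f^*=(f\circ g)^*$, whence
\[
g^*\bigl(f^*\Nef(Y)\bigr)=(f\circ g)^*\Nef(Y).
\]

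Next I would read off membership in the stabilizers. By the previous step, $g$ fixes $\Eff(X;f)$ iff $\Eff(X;f\circ g)=\Eff(X;f)$, and $g$ fixes $f^*\Nef(Y)$ iff $(f\circ g)^*\Nef(Y)=f^*\Nef(Y)$; intersecting the latter with the $g^*$-invariant cone $\Eff(X)$ puts it in the form $(f\circ g)^*\Nefe(Y)=f^*\Nefe(Y)$. Both equalities are exactly conditions $(2)$ and $(2')$ of Lemma~\ref{lem_generalized_ka1.5} for the two marked $\QQ$-factorial birational contractions $(Y,f\circ g)$ and $(Y,f)$ of $X$, so each is equivalent to the isomorphism $(Y,f\circ g)\cong(Y,f)$, i.e. to $f\circ g\circ f^{-1}\in\Aut(Y)$. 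In particular the two stabilizers already coincide. Setting $\sigma\cnec f\circ g\circ f^{-1}$, this reads $g=f^{-1}\circ\sigma\circ f$ with $\sigma\in\Aut(Y)$.

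It remains to match the boundary conditions, namely to upgrade $\sigma\in\Aut(Y)$ to $\sigma\in\Aut(Y,\Delta_Y)$ and thereby land in $\PsAut(X,\Delta;f)$. Since $\Delta_Y=f_*\Delta$, the map $f$ induces a bijection between the prime components of $\Delta$ not lying in $\Exc(f)$ and the prime components of $\Delta_Y$, while $g$ permutes the prime components of $\Delta$; using Corollary~\ref{cor_funpushf} to compose pushforwards, one checks component by component that $g$ preserves $\Supp\Delta$ precisely when $\sigma=fgf^{-1}$ preserves $\Supp\Delta_Y$. Combining the three steps yields
\[
\Stab\bigl(\PsAut(X,\Delta)\acts f^*\Nef(Y)\bigr)=\Stab\bigl(\PsAut(X,\Delta)\acts\Eff(X;f)\bigr)=\PsAut(X,\Delta;f).
\]
The step I expect to be the main obstacle is exactly this last bookkeeping: controlling how the conjugation $g\mapsto fgf^{-1}$ interacts with the exceptional divisors in $\Exc(f)$ and with $\Supp\Delta$, so that the conjugate is genuinely an automorphism of the \emph{pair} $(Y,\Delta_Y)$ and, conversely, that $f^{-1}\sigma f$ is a pseudo-automorphism of $X$ preserving $\Delta$. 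By contrast, the cone-theoretic core is immediate once Lemma~\ref{lem_generalized_ka1.5} is in hand.
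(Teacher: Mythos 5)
Your proof is correct and follows essentially the same route as the paper's: the paper's entire argument is to apply Lemma~\ref{lem_generalized_ka1.5} (via Lemma~\ref{lem_funpull}) to the two marked contractions $f$ and $f\circ\gamma$ for $\gamma\in\PsAut(X,\Delta)$, exactly as you do. The only difference is that you spell out the bookkeeping identifying $\Aut(Y)$ with $\Aut(Y,\Delta_Y)$ and checking compatibility with $\Supp\Delta$, which the paper leaves implicit.
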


    \begin{proof}
Let $g \in \PsAut(X, \Delta)$.
The pullback $g^*$ stabilizes $\Eff(X;f)$ if and only if
$$ \Eff(X;f)= g^*\Eff(X;f) = \Eff(X;fg),$$
where the last equality follows from Remark~\ref{rem_geocone}.
By Lemma~\ref{lem_generalized_ka1.5},
this is equivalent to the property that
the marked $\QQ$-factorial birational contractions
    $(Y,fg)$ and $(Y,f)$ are isomorphic;
    namely $\gamma \cnec fgf^{-1} \in \Aut(Y)$. 
    This proves the first equality. 
Again, by Lemma~\ref{lem_generalized_ka1.5},
that $(Y,fg)$ and $(Y,f)$ are isomorphic is equivalent to
\begin{equation}\label{eqn-fgNefeY}
    f^*\Nefe(Y) = (fg)^*\Nefe(Y) = g^*f^*\Nefe(Y),
\end{equation}
where we use Lemma~\ref{lem_funpull} for the second equality.
Since $\Nefe(Y)$ is dense in $\Nef(Y)$,~\eqref{eqn-fgNefeY}
is equivalent to $f^*\Nef(Y) = g^*f^*\Nef(Y)$.
This proves the second equality.
\end{proof}

\begin{lemma}\label{lem_autonexceptional}  Let $f\colon  (X, \Delta) \dashrightarrow (Y, \Delta_Y)$ 
       be a $\QQ$-factorial birational contraction. Then the action of $\Aut(X,\Delta;f)$ preserves the cone $\Eff(X)\cap\ker f_*$ in $N^1(X)_{\RR}$.
\end{lemma}

\begin{proof}
     It suffices to check that $\Aut(X,\Delta;f)$ preserves the linear subspace $\ker f_*$. Let $g\in\Aut(X,\Delta;f)$. We can write 
$f\circ g = h\circ f,$ where $h$ is an automorphism of $Y$. It follows that $$\ker f_*\subset \ker (h\circ f)_*  = \ker (f\circ g)_* = (g^{-1})_*(\ker f_*),$$
which have the same dimension, thus coincide. 
\end{proof}

    \subsection{Descending the nef cone conjecture}

The following statement is a descent result for the nef cone conjecture, similar to \cite[Lemma 3.4]{To10} for surface pairs.

\begin{proposition}\label{prop_descent}
    Let $(X,\Delta)$ be a klt Calabi--Yau pair.
    Assume that the nef cone conjecture holds for $(X,\Delta)$. 
    Then for any birational morphism $f\colon (X,\Delta)\to (Y,\Delta_Y)$ with $Y$ being $\QQ$-factorial, the action 
    $$\Aut(X,\Delta;f) \acts f^*\Nef(Y)$$
is of polyhedral type, and $\Nefe(Y)=\Nefp(Y)$. 
    In particular, the nef cone conjecture holds for $(Y, \Delta_Y)$. 
\end{proposition}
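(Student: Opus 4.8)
The plan is to realize $\Nef(Y)$ as a face of $\Nef(X)^+=\Nefp(X)$ and invoke the face-descent result, Proposition~\ref{pro_descent_face}, transporting its conclusion back to $Y$ along the pushforward $f_*$. First I would reformulate the hypothesis: by Definition~\ref{def-cone-conjs} the nef cone conjecture for $(X,\Delta)$ gives a rational polyhedral $\Pi_X\subseteq\Nefe(X)$ with $\Aut^*(X,\Delta)\cdot\Pi_X=\Nefe(X)$. Since $\Amp(X)=\Nef(X)^\circ\subseteq\Nefe(X)$ and $\Pi_X\subseteq\Nef(X)^+=\Nefp(X)$, Proposition~\ref{pro_lo4.1} applied to $C=\Nef(X)$ gives $\Aut^*(X,\Delta)\cdot\Pi_X=\Nefp(X)$; comparing the two descriptions simultaneously shows that $\Aut^*(X,\Delta)\acts\Nef(X)$ is of polyhedral type and that $\Nefe(X)=\Nefp(X)$. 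The latter equality will be the key input for the effectivity assertion below.

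Next I would produce the face. As $f\colon X\to Y$ is a birational morphism, $f^*\Nef(Y)$ is a face of $\Nef(X)$ (the nef classes vanishing on all $f$-contracted curves), and the intersection of any face of $\Nef(X)$ with $\Nef(X)^+$ is a face of $\Nef(X)^+$ (a one-line check); by Remark~\ref{rem-face-plus} this face is $\cF:=f^*\Nef(Y)\cap\Nefp(X)=(f^*\Nef(Y))^+$, with $\mathrm{ri}(\cF)=f^*\Amp(Y)$. Proposition~\ref{pro_descent_face} then yields a polyhedral cone $\Pi_\cF\subseteq\cF$ with $\mathrm{ri}(\cF)\subseteq\Stab_\Gamma(\cF)\cdot\Pi_\cF$, where $\Gamma=\Aut^*(X,\Delta)$. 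I would identify $\Stab_\Gamma(\cF)$ through Corollary~\ref{cor_equality_groups}: an element of $\Gamma$ fixes $\cF$ iff it fixes $f^*\Nef(Y)$, i.e. iff it lies in $(\Aut(X,\Delta)\cap\PsAut(X,\Delta;f))^*=\Aut(X,\Delta;f)^*$ — the last equality because $f$ is crepant between the klt Calabi--Yau pairs $(X,\Delta)$ and $(Y,\Delta_Y)$ (numerical triviality of both log canonical divisors forces the discrepancies to vanish), so any element of $\Aut(X,\Delta;f)$ preserves $\Supp\Delta$.

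I would then transport along $f_*$. On the subspace $f^*N^1(Y)_\RR$ the pushforward $f_*$ inverts $f^*$, and for $g\in\Aut(X,\Delta;f)$ with $h=fgf^{-1}$ one has $f_*g^*=h^*f_*$ there; thus $f_*$ conjugates $\Stab_\Gamma(\cF)\acts\cF$ into $\Aut(Y,\Delta_Y;f)\acts\Nef(Y)$, and $\Pi_Y:=f_*\Pi_\cF$ is a polyhedral cone in $\Nef(Y)^+=\Nefp(Y)$ with $\Amp(Y)=f_*\,\mathrm{ri}(\cF)\subseteq\Aut(Y,\Delta_Y;f)^*\cdot\Pi_Y$. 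This is exactly the polyhedral-type property for $\Aut(Y,\Delta_Y;f)\acts\Nef(Y)$.

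The main obstacle is the effectivity statement $\Nefe(Y)=\Nefp(Y)$, and this is where $\Nefe(X)=\Nefp(X)$ is used: since $\Pi_\cF\subseteq\cF\subseteq\Nefp(X)=\Nefe(X)\subseteq\Eff(X)$ and $f_*$ sends effective classes to effective ones with $f_*f^*=\id$, the cone $\Pi_Y$ lands in $\Nef(Y)\cap\Eff(Y)=\Nefe(Y)$. Proposition~\ref{pro_lo4.1} on $Y$ then gives $\Aut(Y,\Delta_Y;f)^*\cdot\Pi_Y=\Nefp(Y)$, which lies in $\Nefe(Y)$ because $\Aut(Y,\Delta_Y;f)$ preserves $\Nef(Y)$ and $\Eff(Y)$; with Proposition~\ref{prop_lz5.1} this gives $\Nefe(Y)=\Nefp(Y)$. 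Finally, as $\Aut(Y,\Delta_Y;f)\le\Aut(Y,\Delta_Y)$, the larger group $\Aut^*(Y,\Delta_Y)$ also acts with polyhedral type on $\Nef(Y)$, so Proposition~\ref{pro-looij} provides a rational polyhedral fundamental domain for $\Aut^*(Y,\Delta_Y)\acts\Nefp(Y)=\Nefe(Y)$, i.e. the nef cone conjecture for $(Y,\Delta_Y)$.
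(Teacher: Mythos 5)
Your proposal is correct and follows essentially the same route as the paper's proof: realize $f^*\Nefp(Y)$ as a face of $\Nefp(X)$, apply Proposition~\ref{pro_descent_face} with the stabilizer identified via Corollary~\ref{cor_equality_groups}, transport the polyhedral-type property to $Y$ along $f_*$, and deduce $\Nefe(Y)=\Nefp(Y)$ from $\Nefe(X)=\Nefp(X)$ before concluding with Proposition~\ref{pro-looij}. The only (immaterial) difference is that the paper obtains $\Nefp(Y)\subset\Eff(Y)$ directly by pushing forward $f^*\Nefp(Y)\subset\Eff(X)$, whereas you route this through the cone $\Pi_Y$ and Proposition~\ref{pro_lo4.1} on $Y$.
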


\begin{proof}

Intersecting with $\Aut(X,\gD)$ on both sides of the equality in Corollary~\ref{cor_equality_groups}, we have
$$\Aut(X, \Delta; f) = \Stab(\Aut(X, \Delta),\, f^{\ast}\Nef(Y)).$$
Note that $f^*\Nefp(Y)$ is a face of the cone $\Nefp(X)$. Using the nef cone conjecture for $(X,\Delta)$ and
Proposition~\ref{pro_descent_face}, we deduce
that $\Aut(X, \Delta; f)  \acts f^{\ast}\Nefp(Y)$
is of polyhedral type. 
That action is isomorphic to $f\circ\Aut(X, \Delta; f)\circ f^{-1}  \acts \Nefp(Y)$ under $f^*$, so the latter is of polyhedral type as well. 

By our assumption on $(X,\Delta)$,
we have $\Nefp(X)=\Nefe(X)$, so
$$f^*\Nefp(Y)\subset \Nefp(X)=\Nefe(X)\subset\Eff(X).$$ 
Since $f_*f^* = \id_{N^1(Y)_{\QQ}}$ (see \S\ref{subssec-pfwd}) and $f_*\Eff(X) \subset \Eff(Y)$, we deduce that
$\Nefp(Y)\subset\Eff(Y)$.
 Together with Proposition \ref{prop_lz5.1}, this proves that $\Nefe(Y)=\Nefp(Y)$.

Note that by definition $f\circ\Aut(X, \Delta; f)\circ f^{-1}$ is a subgroup of $\Aut(Y)$. Clearly it is also a subgroup of $\Aut(Y,\Delta_Y)$.
Thus, we can apply Proposition \ref{pro-looij} to conclude that the nef cone conjecture holds for $(Y,\Delta_Y)$.
\end{proof}

\ssec{Finiteness statement from cone conjecture} 
It is well-known that the cone conjectures imply various finiteness statements. In this subsection, we recall one such result for later use. We also provide a proof for the sake of completeness.

\begin{pro}\label{prop_BCfini}
Let $(X,\Delta)$ be a klt Calabi--Yau pair.
    Assume that the nef cone conjecture holds for $(X,\Delta)$. Then, up to isomorphism of pairs, there are only finitely many pairs $(Y,\Delta_Y)$ with $Y$ being $\QQ$-factorial arising from birational morphisms  $(X,\Delta) \to (Y,\Delta_Y)$. 
\end{pro}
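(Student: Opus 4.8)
The plan is to encode each regular $\QQ$-factorial birational contraction as a \emph{face} of the nef cone of $X$ and to control these faces using the rational polyhedral fundamental domain supplied by the nef cone conjecture. Write $\Gamma \cnec \Aut^{\ast}(X,\Delta)$ and set $C \cnec \Nefe(X)$, so that $C^{+} = \Nefp(X)$. By the hypothesis (Definition~\ref{def-cone-conjs}(3)) and Proposition~\ref{pro-looij}, the action $\Gamma \acts C$ is of polyhedral type, so there is a rational polyhedral cone $\Pi \subset \Nefp(X)$ with $\Gamma \cdot \Pi = \Nefp(X)$ whose translates have disjoint interiors. To a regular contraction $f\colon X \to Y$ I attach $\tau_f \cnec f^{\ast}\Nef(Y) \cap \Nefp(X)$. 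Since $f$ is a morphism, $f^{\ast}\Nef(Y)$ is a face of $\Nef(X)$, cut out by a supporting functional nonnegative on $\Nef(X)$; the same functional is nonnegative on the subcone $\Nefp(X)$, so $\tau_f$ is a face of $\Nefp(X)$. Fix a rational ample class $A_Y$ on $Y$ and set $p_f \cnec f^{\ast}A_Y$. Every rational ample class pulls back into $\Nefe(X) \subset \Nefp(X)$ (using Proposition~\ref{prop_lz5.1}), so $f^{\ast}\Amp(Y) \subset \Nefp(X)$; as $f^{\ast}\Amp(Y) = \mathrm{ri}(f^{\ast}\Nef(Y))$ is a relatively open full-dimensional subset of $f^{\ast}\Nef(Y)$ contained in $\tau_f$, we get $f^{\ast}\Amp(Y) \subset \mathrm{ri}(\tau_f)$, and in particular $p_f \in \mathrm{ri}(\tau_f)$. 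Thus $\tau_f$ is the smallest face of $\Nefp(X)$ containing the rational class $p_f$.

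By Lemma~\ref{lem_generalized_ka1.5}, two marked regular contractions $(Y_1,f_1)$ and $(Y_2,f_2)$ are isomorphic if and only if $\tau_{f_1} = \tau_{f_2}$, so $(Y,f) \mapsto \tau_f$ is injective on isomorphism classes of marked contractions. Moreover $\Gamma$ permutes these faces: for $\phi \in \Aut(X,\Delta)$ one has $\phi^{\ast}\tau_f = \tau_{f\phi}$ by Lemma~\ref{lem_funpull} (together with the fact that $\phi^{\ast}$ preserves $\Nefp(X)$), and $f\phi\colon X \to Y$ is again a regular contraction with the \emph{same} target. Hence it suffices to prove that the faces $\{\tau_f\}$ fall into finitely many $\Gamma$-orbits. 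Given $f$, since $p_f \in \Nefp(X) = \Gamma \cdot \Pi$, there is $\phi \in \Aut(X,\Delta)$ with $\phi^{\ast}p_f \in \Pi$, which by Lemma~\ref{lem_funpull} reads $(f\phi)^{\ast}A_Y \in \Pi$. Replacing $(Y,f)$ by the marked contraction $(Y,f\phi)$ — same target, $\Gamma$-translated face — I may therefore assume $p_f \in \Pi$.

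The heart of the argument, and the step I expect to be the main obstacle, is to see that each such $\tau_f$ is pinned down by combinatorial data on $\Pi$. Let $\Pi''$ be the unique face of $\Pi$ with $p_f \in \mathrm{ri}(\Pi'')$. Because $\tau_f$ is a face of $\Nefp(X) \supseteq \Pi''$ containing the relative-interior point $p_f$ of the convex cone $\Pi''$, the defining property of a face forces $\Pi'' \subseteq \tau_f$ (\cite[Theorem 18.1]{Ro70}). Since also $p_f \in \mathrm{ri}(\tau_f)$, the smallest face of $\Nefp(X)$ containing $\Pi''$ coincides with the smallest one containing $p_f$, namely $\tau_f$; thus $\tau_f$ is completely determined by the face $\Pi''$ of $\Pi$. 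As the polyhedral cone $\Pi$ has only finitely many faces, there are only finitely many faces $\tau_f$ with $p_f \in \Pi$, and hence only finitely many $\Gamma$-orbits among all the $\tau_f$. The delicate points are exactly the two relative-interior assertions $p_f \in \mathrm{ri}(\tau_f)$ (the contraction/face dictionary of the first paragraph) and $p_f \in \mathrm{ri}(\Pi'')$ (the definition of $\Pi''$), which are what allow one to recover $\tau_f$ from $\Pi''$; the interaction of relative interiors with intersections rests on \cite[Theorem 6.5]{Ro70}.

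Finally I assemble the conclusion. By the $\Gamma$-equivariant injection $(Y,f) \mapsto \tau_f$, the finitely many $\Gamma$-orbits of faces yield finitely many $\Gamma$-orbits of marked regular contractions, and along any such orbit the target $Y$ is constant up to isomorphism, since the orbit consists of the markings $(Y,f\phi)$. So there are only finitely many isomorphism classes of targets $Y$. It remains to see that each orbit produces only finitely many boundaries $\Delta_Y = f_{\ast}\Delta$: along the orbit $\Delta_Y = (f\phi)_{\ast}\Delta = f_{\ast}\phi_{\ast}\Delta$, and as $\phi \in \Aut(X,\Delta)$ preserves $\Supp\Delta$ and merely permutes its finitely many prime components (keeping the fixed multiset of coefficients), the divisor $\phi_{\ast}\Delta$ ranges over a finite set; pushing forward by the fixed map $f$ then gives finitely many $\Delta_Y$. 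Therefore only finitely many isomorphism classes of pairs $(Y,\Delta_Y)$ arise, as desired.
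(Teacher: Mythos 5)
Your proof is correct and follows essentially the same route as the paper's: encode each regular contraction as the face $f^{\ast}\Nef(Y)$ of the nef cone, observe via Lemma~\ref{lem_generalized_ka1.5} that this is injective on isomorphism classes of marked contractions and $\Aut(X,\Delta)$-equivariant, and conclude finiteness of orbits of such faces from the finitely many faces of the rational polyhedral fundamental domain. Your write-up is a careful expansion of the paper's three-sentence argument, and in fact treats one point more precisely than the paper does, namely that along a $\Gamma$-orbit of markings the pushed-forward boundary $\Delta_Y$ only ranges over a finite set.
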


\begin{proof}
By assumption, $\Aut(X,\Delta) \acts \Nefp(X)$ has a rational polyhedral fundamental domain $\Pi$.
    Consider the set $S$ of isomorphism classes of pairs $(Y,\Delta_{Y})$ with $Y$ being $\QQ$-factorial admitting a birational morphism $f\colon(X,\Delta)\to (Y,\Delta_Y)$. 
    Sending $(Y,\Delta_{Y})$ to $f^*\Nef(Y)$ defines an injection from the set $S$ 
    into
    the set of $\Aut(X,\Delta)$-orbits of closed rational faces of the cone $\Nef(X)$. Each orbit contains a face of $\Pi$, hence $S$ injects into the set of faces of $\Pi$, which is finite.
\end{proof}

\section{Proof of Theorem~\ref{main_thm-equiv}}\label{sec-PfThMain}

We state and prove the following theorem, which is slightly more general, and from which Theorem~\ref{main_thm-equiv} directly follows. Item $(2')$ is first introduced by Li--Zhao in \cite[Conjecture 1.2(1)]{LZ22}.

\begin{theorem}\label{thm_equivalence_detailed}  Let $(X,\gD)$ be a klt Calabi--Yau pair. Consider the following statements. 
        \begin{enumerate}
            \item The movable cone conjecture holds for $(X,\gD)$.
            \item The effective cone conjecture holds for $(X,\gD)$. 
            \item[$(2')$] There is a polyhedral cone $\Pi\subset \Eff(X)$ satisfying 
            $$\Movo(X) \subset \PsAut(X, \Delta)\cdot \Pi.$$
            \item 
            \begin{enumerate}
            \item The nef cone conjecture holds for each klt pair $(X',\gD')$ obtained by a small $\QQ$-factorial modification from $(X,\gD)$.
            \item 
            There exist only finitely many $(X', \Delta')$ up to isomorphism of pairs arising as small $\QQ$-factorial modifications of $(X,\Delta)$. 
            \end{enumerate}
            \item 
            \begin{enumerate}
            \item The Mori chamber cone conjecture holds for each 
            $\QQ$-factorial birational contraction $f\colon (X,\gD)\dto (Y,\gD_Y)$. 
            \item 
            There exist only finitely many $(Y,\Delta_Y)$ up to isomorphism of pairs arising as $\QQ$-factorial birational contractions of $(X,\Delta)$. 
            \end{enumerate}
        \end{enumerate} 
   Then the following assertions hold.

    \begin{itemize}
    \item[(i)] We have $(3) \Leftrightarrow (4) \Rightarrow (2)$ and 
    $[(1) \text{ or } (2)] \Rightarrow (2')$. 
    
    \item[(ii)] Assuming the inclusion $\Move(X) \subset \Movp(X)$, 
    we have $(3) \Rightarrow (1)$. 
    
    \item[(iii)] Assuming that, for some $\QQ$-divisor $\gG$, the pair $(X,\gG)$ is klt Calabi--Yau and satisfies Assumption \ref{hyp-GMMPX}. We have $(2') \Rightarrow (3)$. 
    \end{itemize}
    \end{theorem}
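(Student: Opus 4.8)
The plan is to establish the three assertions (i)--(iii) and then close a cycle of implications under the good-minimal-model hypothesis. Throughout I would work in the convex-geometric language of Section~\ref{sec-convgeom}, translating each cone conjecture into the assertion that a suitable action is \emph{of polyhedral type}, so that the equivalence of Proposition~\ref{pro-looij} converts every conjecture into a statement about coverings by polyhedral cones. Two mechanisms will recur: \emph{assembling} a global action from its chambers (Proposition~\ref{prop_glue_chamber}), used to reach the effective and movable cone conjectures; and \emph{descending} an action to faces or to contracted models (Propositions~\ref{pro_descent_face} and~\ref{prop_descent}), used to transport the nef cone conjecture across birational contractions. The formal implication $[(1)\text{ or }(2)]\Rightarrow(2')$ requires no machinery: the fundamental domain $\Pi$ furnished by $(2)$ already satisfies $\Movo(X)\subset\Bigc(X)\subset\Eff(X)^\circ\subset\PsAut^*(X,\Delta)\cdot\Pi$, while a fundamental domain $\Sigma$ for the movable cone conjecture lies in $\Move(X)\subset\Eff(X)$ and covers $\Movo(X)$ under $\PsAut^*(X,\Delta)$.

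For assertion (i), the equivalence $(3)\Leftrightarrow(4)$ rests on the factorization $f=\mu\circ\alpha$ of Lemma~\ref{lem-factorize}, with $\alpha$ a small $\QQ$-factorial modification and $\mu$ regular, together with $\alpha^*\Eff(X';\mu)=\Eff(X;f)$ from Remark~\ref{rem_geocone}. Since an SQM is a QBC with no exceptional divisors, $(4)\Rightarrow(3)$ follows by restriction, using $\Eff(X;\alpha)=\alpha^*\Nefe(X')$ and Corollary~\ref{cor_equality_groups} to identify the stabilizer with the image of $\Aut(X',\Delta')$. For $(3)\Rightarrow(4)$, given a QBC $f=\mu\circ\alpha$, I would apply the descent Proposition~\ref{prop_descent} to the regular contraction $\mu$ (its source satisfying the nef cone conjecture by $(3a)$) to get the nef cone conjecture for $(Y,\Delta_Y)$, then produce a fundamental domain for the $\mu$-Mori chamber by adjoining the finite, permuted exceptional cone $\sum_{E\in\Exc(\mu)}\RR_{\ge 0}[E]$ to the pullback of a fundamental domain of $\Nefe(Y)$, via the splitting $N^1(X')_\RR=\mu^*N^1(Y)_\RR\oplus\mathrm{Span}_\RR(\Exc(\mu))$; pulling back through $\alpha^*$ yields $(4a)$, and $(4b)$ follows by combining $(3b)$ with Proposition~\ref{prop_BCfini} over the finitely many SQM targets. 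Finally $(4)\Rightarrow(2)$ is an application of Proposition~\ref{prop_glue_chamber} to $C=\Eff(X)$ with the Mori chambers as the $N_Y$: their interiors cover $\Bigc(X)$ by Proposition~\ref{prop_decompeffmov}, $(4a)$ supplies the polyhedral type of each stabilizer action, and $(4b)$ the finiteness of orbits.

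Assertion (ii), $(3)\Rightarrow(1)$ under $\Move(X)\subset\Movp(X)$, is again Proposition~\ref{prop_glue_chamber}, now with $C=\Move(X)$ and chambers $N_{Y'}=\alpha^*\Nefe(X')$ over SQMs: the covering $\Movo(X)\subset\bigcup_\alpha\alpha^*\Nefe(X')$ is the unconditional half of Proposition~\ref{prop_decompeffmov}, the hypothesis $\Move(X)\subset\Movp(X)$ gives $\bigcup_\alpha N_{Y'}^+\subset\Move(X)$, finiteness of orbits is $(3b)$, and the polyhedral type of each $\mathrm{Stab}\acts N_{Y'}$ is $(3a)$ transported through $\alpha^*$ and Corollary~\ref{cor_equality_groups}. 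The crux is assertion (iii), $(2')\Rightarrow(3)$, where good minimal models enters essentially through Proposition~\ref{prop_finitepi}. Fixing $\Pi\subset\Eff(X)$ polyhedral with $\Movo(X)\subset\PsAut^*(X,\Delta)\cdot\Pi$, I would first obtain $(3b)$: each SQM chamber has interior inside $\Movo(X)$, hence meets some translate $g\Pi$, so $g^{-1}$ of it meets $\Pi$; by Proposition~\ref{prop_finitepi} only finitely many chambers meet $\Pi$, giving finitely many SQM chambers up to $\PsAut^*(X,\Delta)$, which corresponds to finitely many pairs $(X',\Delta')$ up to isomorphism. For $(3a)$, fix a chamber $C_0=\alpha^*\Nefe(X')$ with stabilizer $\Gamma_0$; intersecting the covering of $\Movo(X)$ with $C_0$, the translates $g^{-1}C_0$ range over the finitely many chambers meeting $\Pi$, and collecting the polyhedral pieces $g_k(\Pi\cap\overline{g_k^{-1}C_0})$ (polyhedral by Proposition~\ref{prop_finitepi}) into a sum $\Pi_0\subset\overline{C_0}$, the $\Gamma_0$-transitivity argument of Proposition~\ref{pro_descent_face} yields $C_0^\circ\subset\Gamma_0\cdot\Pi_0$, i.e.\ $\Gamma_0\acts C_0$ is of polyhedral type; transporting through $\alpha^*$ and invoking $\Nefe(X')=\Nefp(X')$ from Proposition~\ref{prop_lz5.1} gives the nef cone conjecture for $(X',\Delta')$.

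I expect this last assembling-plus-finiteness step to be the main obstacle: it must manufacture a genuine polyhedral fundamental domain on each individual nef cone out of only a covering condition on the interior of the movable cone, and it is precisely here that the geography-of-models input, hence the existence of good minimal models, is indispensable. To conclude, under good minimal models in dimension $\dim X$ one has $\Move(X)\subset\Movp(X)$ by Corollary~\ref{cor_move_movp}, so (ii) applies; the cycle $(2')\Rightarrow(3)\Rightarrow(1)\Rightarrow(2')$ (from (iii), (ii), and $(1)\Rightarrow(2')$ of (i)) makes $(1),(2'),(3)$ equivalent, and adjoining $(3)\Leftrightarrow(4)\Rightarrow(2)\Rightarrow(2')$ from (i) folds $(2)$ and $(4)$ into the same class, proving that all the statements are equivalent.
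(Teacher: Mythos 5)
Your proposal is correct and follows essentially the same route as the paper: the same reduction of each conjecture to "polyhedral type" via Looijenga (Proposition \ref{pro-looij}), the factorization of Lemma \ref{lem-factorize} plus the descent Proposition \ref{prop_descent} and the exceptional-ray adjunction for $(3)\Leftrightarrow(4)$, the gluing Proposition \ref{prop_glue_chamber} over the chamber decompositions of Proposition \ref{prop_decompeffmov} for $(4)\Rightarrow(2)$ and (ii), and the geography-of-models finiteness of Proposition \ref{prop_finitepi} for $(2')\Rightarrow(3)$. The only (cosmetic) deviation is that in proving (3a) you run the covering-and-coset argument directly on an arbitrary SQM chamber $C_0=\alpha^*\Nefe(X')$, whereas the paper specializes to the chambers indexed by $\PsAut(X,\Delta)$ and treats the general $X'$ implicitly via the SQM-invariance of hypothesis $(2')$; the mechanism is identical.
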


\begin{proof}[Proof of Theorem ~\ref{main_thm-equiv} using Theorem \ref{thm_equivalence_detailed}.]

The existence of minimal models for $X$ implies the inclusion $\Move(X) \subset \Movp(X)$, by Corollary \ref{cor_move_movp}.
By Lemma \ref{lem-GMMPind}, the existence of good minimal models in dimension $\dim X$ implies Assumption \ref{hyp-GMMPX} for any klt Calabi--Yau pair of the form $(X,\Gamma)$ with $\gG$ a $\QQ$-divisor, and there exists at least one such pair on $X$ by Proposition \ref{prop_fg3.1}.
\end{proof}

    Let us note that the implication $[(1) \text{ or } (2)] \Rightarrow (2')$ is clear. In the next subsections, we prove the remaining implications.

\subsection{The equivalence between (3) and (4)}\label{subsection_equivalence_3&4}

\begin{proof}[Proof of (3) $\Leftrightarrow$ (4) in Theorem~\ref{thm_equivalence_detailed}(i)]
The implication $(4) \Rightarrow (3)$ is quite straight-forward. Let us explain that.
Since small $\QQ$-factorial modifications are
$\QQ$-factorial birational contractions,
(4b) implies (3b).
Let  $f\colon  (X,\gD) \dto (X',\gD)$ be a
small $\QQ$-factorial modification.
Then $\Eff(X;f) = f^*\Nefe(X')$ and 
$$f^{-1} \circ \Aut(X',\Delta') \circ f \subset \PsAut(X,\gD).$$
It follows that
$$\PsAut(X,\Delta;f) =
   f^{-1} \circ \Aut(X',\Delta') \circ f,$$
(see the definition in Subsection \ref{ssec-stab-mori}), thus (4a) implies (3a).

Now we assume (3).
First we prove (4b).
Using (3b), we fix $(X_i,\Delta_i)$ for $i=1,\ldots, r$ to be a set of representatives of the (finitely many) klt pairs obtained by small $\QQ$-factorial modifications from $(X,\Delta)$. For each $i$, we also fix an arbitrary marking $\alpha_i\colon (X,\Delta)\dto (X_i,\Delta_i)$. Since we assume (3a) and by Proposition \ref{prop_BCfini}, for each $1\le i\le r$, we have finitely many $(Y_{i,j},\Delta_{i,j})$, $1\le j\le s_i$, arising from birational morphisms $\mu_{i,j}\colon (X_i,\Delta_i) \to (Y_{i,j},\Delta_{i,j})$ with $Y_{i,j}$ being $\QQ$-factorial. Note that we take and fix one $\mu_{i, j}$ for each pair of $i, j$.

We fix an arbitrary $\QQ$-factorial birational contraction $f\colon (X,\Delta)\dto (Y,\Delta_Y)$. By Lemma \ref{lem-factorize}, we factorize $f = \mu\circ \alpha$, where $\alpha\colon (X,\Delta)\dto (X',\Delta')$ is a small $\QQ$-factorial modification and $\mu\colon (X',\Delta')\to (Y,\Delta_Y)$ is a birational morphism with $Y$ being $\QQ$-factorial.
By definition of the pairs $(X_i,\Delta_i)$, we can find an index $1\le i\le r$ and an isomorphism $\beta\colon (X_i,\Delta_i)\eto (X',\Delta')$. 
Since $\mu \circ \gb \colon  X_i \to Y$ 
is a birational morphism,
by definition of the $\mu_{i,j}$ we can then find an index $1\le j\le s_i$, and an isomorphism $(Y,\Delta_Y) \eto (Y_{i,j},\Delta_{i,j})$, which implies (4b).

We now prove (4a). We keep the notation of the previous paragraph, 
notably the factorization $f = \mu \circ \ga$.
We have
$$\PsAut(X,\Delta;f) = \alpha^{-1}\circ\PsAut(X',\Delta';\mu)\circ\alpha,$$
and recall from Remark~\ref{rem_geocone} that
$$\alpha^*\Eff(X';\mu)=\Eff(X;f).$$

By (3a) and Proposition \ref{prop_descent},
we have a rational polyhedral cone $\Pi$ inside $\mu^*\Nef(Y)$ satisfying
$$\Aut(X', \Delta';\mu)\cdot\Pi = \mu^*\Nefe(Y)=\mu^*\Nefp(Y).$$
This implies $\Eff(X';\mu)=\Effp(X';\mu)$
by the definition of a Mori chamber.
Define $\gS$ as the convex cone spanned by $\Pi$ and 
by the prime exceptional divisors 
$E_1, \ldots,E_\ell$ of $\mu$. 
Clearly, $\gS$ is a rational polyhedral cone contained in $\Eff(X';\mu)$. By Lemma \ref{lem_autonexceptional}, the action of $\Aut(X',\Delta';\mu)$ preserves the cone
$$\Eff(X')\cap\ker \mu_* = \sum_{k = 1}^\ell \RR_{\ge 0}[E_k].$$
We thus have
\begin{equation}
    \begin{split}
        \Aut(X',\Delta';\mu)\cdot\Sigma 
 & = \Aut(X',\Delta';\mu)\cdot\left(\Pi + \sum_{k = 1}^\ell \RR_{\ge 0}[E_k]\right) \\
& = \mu^*\Nefp(Y) + \sum_{k = 1}^\ell \RR_{\ge 0}[E_k]
=\Eff(X';\mu),
    \end{split}
\end{equation}
and the equality also holds
if the group $\Aut(X',\Delta';\mu)$
gets replaced by the larger group $\PsAut(X',\Delta';\mu)$.
Hence 
$\PsAut(X',\Delta';\mu) \acts \Eff(X';\mu)$
satisfies the cone conjecture by Proposition~\ref{pro-looij}.
\end{proof}

\subsection{Assembling the cone conjectures of chambers}

\begin{proof}[Proof of (4) $\Rightarrow$ (2) in Theorem~\ref{thm_equivalence_detailed}(i) and of Theorem~\ref{thm_equivalence_detailed}(ii)]

\hfill

Assume that (4) holds. By \S \ref{subsection_equivalence_3&4}, (3) holds as well.
We want to prove (2), respectively (1)
assuming $\Move(X) \subset \Movp(X)$. By Proposition~\ref{prop_decompeffmov}, we have the following inclusions:
\begin{align}
\Movo(X) &\subset \bigcup_{\substack{(Y, \alpha)
\\{\rm SQM}}} \Eff(X; \alpha\colon X\dto Y) \subset \Move(X),
\label{eq-dec1}\\
\mbox{and}\quad\Bigc(X) &\subset \bigcup_{\substack{(Z, f)\\{\rm QBC}}} \Eff(X; f\colon X\dto Z) \subset \Eff(X).\label{eq-dec2}
\end{align}
Since we assume (4a), we have $\Eff(X;f)=\Effp(X;f)$ for every $\QQ$-factorial birational contraction $f:X\dto Z$.
This puts us in the setting of Subsection \ref{ssec-assemble-convex}. We can check that the assumptions of Proposition~\ref{prop_glue_chamber} are satisfied: Since we are assuming (4b), there are finitely many $\QQ$-factorial birational contractions of $X$ modulo $\PsAut(X,\Delta)$. Our assumption of (4a), together with Corollary~\ref{cor_equality_groups}, ensures that each induced action 
$$\mathrm{Stab}(\PsAut(X,\Delta), \,  \Eff(X;f))\;\acts\Eff(X;f)$$
is of polyhedral type.

Hence, we can apply Proposition~\ref{prop_glue_chamber} to the unions \eqref{eq-dec1} and \eqref{eq-dec2}, both with the $\PsAut(X,\Delta)$-action. It shows that 
$$\PsAut(X,\Delta)\acts\Move(X)\mbox{ \ \ and \ \ }\PsAut(X,\Delta)\acts\Eff(X)$$ 
are of polyhedral type, and provides the inclusions
$$\Movp(X)\subset\Move(X)\mbox{ \ \ and \ \ }\Effp(X)\subset\Eff(X).$$ 
We have the reverse inclusions $\Move(X) \subset \Movp(X)$ by assumption, and $\Eff(X) \subset \Effp(X)$ unconditionally. Now we use Proposition \ref{pro-looij} to conclude.
\end{proof}

\subsection{From the effective cone to the nef cone conjectures} 

\begin{proof}[Proof of Theorem~\ref{thm_equivalence_detailed}(iii)]

\hfill

We assume $(2')$: Let $\Pi\subset\Eff(X)$ be a polyhedral cone such that 
\begin{equation}\label{eqn-inclusion61iii}
    \Movo(X)\subset\PsAut(X,\Delta)\cdot \Pi.
\end{equation}
By replacing $\Pi$ with some $\PsAut(X,\Delta)$-translate, we can assume that $\Pi$ intersects the ample cone $\Amp(X)$. 
By Proposition \ref{prop_finitepi}, 
the cone $\Pi$ only intersects the interiors of finitely many Mori chambers. 
In particular, the cone $\Pi$ only intersects the interiors of finitely many Mori chambers of the form 
$\Eff(X;\alpha\colon X\dto Y)$ with $\alpha$ a small $\QQ$-factorial modification. Let us list these chambers as 
\begin{equation}\label{list-EffSQM}
\Eff(X;\alpha_1\colon X\dto X_1),\ldots, 
\Eff(X;\alpha_r\colon X\dto X_r).    
\end{equation}

    Let us first show (3b): We will show that any klt pair $(Y,\Delta_Y)$ arising as a small $\QQ$-factorial modification of $(X,\Delta_X)$ satisfies $Y\simeq X_i$ for some $1\le i\le r$.
 We take an arbitrary small $\QQ$-factorial modification $\alpha:X\dto Y$ as a marking. By assumption, there is an element $\gamma\in \PsAut(X,\Delta)$ such that the cone $\Pi$ intersects the interior of ($\alpha\circ\gamma$)-Mori chamber:
    $$\Eff^{\circ}(X;\alpha \circ \gamma) = \gamma^*\Eff^{\circ}(X;\alpha\colon X\dto Y) = \gamma^*\alpha^*\Amp(X)\subset\Movo(X)$$
    (see also Remark \ref{rem_geocone}). In our exhaustive list \eqref{list-EffSQM}, there is an index $j$ such that $\Eff(X;\alpha \circ \gamma)=\Eff(X;\alpha_j)$. By Lemma \ref{lem_generalized_ka1.5}, this shows that $X_j$ and $Y$ are isomorphic as varieties, 
    which implies (3b).

    We now proceed to show (3a). 
    Within the list~\eqref{list-EffSQM}, let
\begin{equation}\label{list-Effpsaut}
    \Eff(X;\gamma_1\colon X\dto X), \ldots, 
\Eff(X;\gamma_s\colon X\dto X)
\end{equation}
be the chambers of the form  $\Eff(X;\gamma\colon X\dto X)$ with $\gamma\in\PsAut(X,\Delta)$.  
    By Proposition \ref{prop_finitepi}, for each $1\le j\le s$, the intersection 
    $$(\gamma_j^{-1})^{\ast}\Pi \cap \Nef(X)\subset\Eff(X)$$ 
    is a polyhedral cone. Hence, the convex cone $\Sigma$ generated by these finitely many intersections is a polyhedral cone contained in $\Nefe(X)$. 
    
    By Proposition~\ref{pro-looij},
    it now suffices to show that 
     $$\Amp(X) \subset \Aut(X, \Delta)\cdot \Sigma.$$ 
     We take $D \in \Amp(X)$. 
     By the inclusion given in \eqref{eqn-inclusion61iii}, there is an element $\beta \in \PsAut(X, \Delta)$ such that $\beta^{\ast}D \in \Pi$. 
     In particular $\beta^{\ast}\Amp(X) \cap \Pi \neq \varnothing$, so the chamber 
     $\Eff(X;\beta) = \beta^{\ast}\Nefe(X)$ coincides with one of the chambers $\Eff(X;\gamma_i)$ in our  list \eqref{list-Effpsaut} by definition.
     By Lemma \ref{lem_generalized_ka1.5}, 
     $(X, \beta)$ and $(X, \gamma_i)$ are isomorphic as marked small $\QQ$-factorial modifications of $(X,\Delta)$, i.e., $\beta\gamma_i^{-1}\in \Aut(X, \Delta)$.
We conclude that  
     $$(\beta\gamma_i^{-1})^{\ast}D = (\gamma_i^{-1})^{\ast} \beta^{\ast}D \in (\gamma_i^{-1})^{\ast}\Pi \cap \Nef(X)\subset \Sigma,$$ and thus
     $\Amp(X) \subset \Aut(X, \Delta)\cdot \Sigma$. 
\end{proof}

\subsection{Equivalence of cone conjectures in dimension two}
    
In dimension two, the work of Totaro implies the following result.  

    \begin{cor}\label{cor_eff_two} 
   Let $(X, \Delta)$ be a klt Calabi--Yau pair of dimension $2$. Then the four statements given in
    Theorem~\ref{main_thm-equiv} hold for $(X,\Delta)$.
    
    In particular,
    there exists a rational polyhedral fundamental domain 
    for the action of $\Aut(X, \Delta)$ on $\Eff(X)$. 
    \end{cor}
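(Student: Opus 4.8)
The dimension-two case reduces to Totaro's theorem \cite{To10}, which establishes the nef cone conjecture for every klt Calabi--Yau pair of dimension $2$. The plan is to deduce statement $(3)$ of Theorem~\ref{main_thm-equiv} from this input, and then let the equivalences of Theorem~\ref{main_thm-equiv}(iii) produce the remaining statements. The only genuinely two-dimensional phenomenon I would exploit is that small $\QQ$-factorial modifications collapse; the rest is formal.

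First I would show that \emph{every small $\QQ$-factorial modification of $X$ is an isomorphism}. Since $X$ is a surface, $\Mov(X)=\Nef(X)$ by the discussion in Section~\ref{sec_preliminaries}, so $\Move(X)=\Nefe(X)$. In the decomposition $\Move(X)=\bigcup_{(Y,\alpha)\,\mathrm{SQM}}\alpha^*\Nefe(Y)$ of Proposition~\ref{prop_decompeffmov} (valid here, as minimal models exist in dimension two), the identity chamber $\Nefe(X)$ already fills the whole movable cone; as the chambers are full-dimensional with disjoint interiors, every chamber must equal $\Nefe(X)$. By Lemma~\ref{lem_generalized_ka1.5} each marked small $\QQ$-factorial modification $(Y,\alpha)$ is then isomorphic to $(X,\mathrm{id})$, so $\alpha$ is biregular. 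In particular $\PsAut(X,\Delta)=\Aut(X,\Delta)$.

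This immediately yields statement $(3)$. For $(3\mathrm{b})$, there is a single isomorphism class of small $\QQ$-factorial modification, namely $(X,\Delta)$ itself. For $(3\mathrm{a})$, any klt pair $(X',\Delta')$ arising as such a modification is again a klt Calabi--Yau pair of dimension $2$, so Totaro's theorem applies directly and gives the nef cone conjecture for $(X',\Delta')$. Since the existence of good minimal models is known in dimension up to $3$ \cite{Sh96}, it holds for $\dim X=2$, and Theorem~\ref{main_thm-equiv}(iii) then yields the equivalence of $(1)$--$(4)$, so all four statements hold for $(X,\Delta)$.

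Finally, statement $(2)$ provides a rational polyhedral fundamental domain for $\PsAut^*(X,\Delta)\acts\Eff(X)$; combining with $\PsAut(X,\Delta)=\Aut(X,\Delta)$ gives the asserted rational polyhedral fundamental domain for $\Aut(X,\Delta)\acts\Eff(X)$. There is no real obstacle here beyond citing the right results: the substance is entirely contained in Totaro's two-dimensional cone conjecture, and the present corollary only repackages it through the collapse of small modifications and the equivalences of Theorem~\ref{main_thm-equiv}.
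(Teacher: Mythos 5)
Your proof is correct and follows essentially the same route as the paper: Totaro's two-dimensional nef cone conjecture plus the biregularity of small $\QQ$-factorial modifications in dimension two, fed into the equivalences of Theorem~\ref{main_thm-equiv}(iii) via the existence of good minimal models in dimension $\le 3$. The only difference is that you derive the biregularity of small modifications from the chamber decomposition of Proposition~\ref{prop_decompeffmov} and Lemma~\ref{lem_generalized_ka1.5}, whereas the paper simply invokes the classical fact that an isomorphism in codimension one between normal projective surfaces is biregular; both are fine.
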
 

    \begin{proof}
    In dimension two, the nef cone conjecture was proven by Totaro \cite[Theorem 4.1]{To10} for any klt Calabi--Yau pair $(X,\Delta)$. Since in dimension two, isomorphisms in codimension one are exactly  isomorphisms, and since the existence of good minimal models holds in dimension two, this shows that all four statements of
    Theorem~\ref{main_thm-equiv} hold, for any klt Calabi--Yau surface pair $(X,\Delta)$. 
    \end{proof}

Let us make a short comment about the effective cone conjecture for surface pairs. In dimension two, the duality between numerical classes of divisors in $N^1(X)_{\RR}$ and of curves in $N_1(X)_{\RR}$ allows to identify the cone of effective divisors $\Eff(X)$ with the cone of effective curves ${\rm NE}(X)$. Embracing that point of view, the equivalence of Statements (2) and (3) in Theorem \ref{main_thm-equiv} can be partially recovered by a general duality argument (see \cite[Proposition-Definition 4.1]{Lo14}), together with the well-known equality $\Eff(X) = \Effp(X)$. The latter equality is a consequence of the Zariski decomposition of pseudo-effective $\RR$-divisors on a surface, and of the equality $\Nefe(X)=\Nefp(X)$ (given by the nef cone conjecture).

\section{Cone conjectures for Schoen threefolds}\label{sec-application}

In this final section, 
we prove Theorem \ref{thm_mainSchoen} and Corollary~\ref{cor_ratpol}
about Schoen threefolds.

\subsection{Decomposition of effective cones of fiber products} 

 \begin{proposition}\label{prop_decom_eff} For $i=1,2$, let $\phi_i : W_i \to \PP^1$ be a surjective morphism from a projective variety to $\PP^1$. Assume that
    	\begin{enumerate}
    		\item  the variety $W=W_1 \times_{\PP^1} W_2$ is irreducible; 
    		\item  we have $$p_1^*N^1(W_1)_{\RR}+p_2^*N^1(W_2)_{\RR}=N^1(W)_{\RR}$$ 
      where $p_i : W \to W_i$ are the natural projections. 
    	\end{enumerate}
    	Then 
    	$$p_1^*\Eff(W_1) + p_2^*\Eff(W_2)=\Eff(W).$$ 
    \end{proposition}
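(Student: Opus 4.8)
The inclusion $p_1^*\Eff(W_1)+p_2^*\Eff(W_2)\subseteq \Eff(W)$ is the elementary direction: for a surjective morphism the pullback of an effective Cartier divisor is again effective, and $\Eff(W)$ is stable under addition, so this half is immediate. I would then concentrate on the reverse inclusion $\Eff(W)\subseteq p_1^*\Eff(W_1)+p_2^*\Eff(W_2)$. Since both sides are convex cones and $\Eff(W)$ is generated by the classes of prime divisors, it suffices to show that the class of an arbitrary prime divisor $D\subset W$ lies in the right-hand cone. Throughout I write $\phi:=\phi_1\circ p_1=\phi_2\circ p_2\colon W\to\PP^1$ and record the basic relation $[F]=p_1^*[F_1]=p_2^*[F_2]$ for the fibre classes, so that $[F]$ lies in \emph{both} pullback cones; whole fibres of $\phi$ are thus harmless, being equal to $p_1^*[F_1]$.

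For a general prime divisor the uniform mechanism is to start from hypothesis (2), which lets me write $[D]=p_1^*\alpha_1+p_2^*\alpha_2$ for some $\alpha_i\in N^1(W_i)_\RR$, this decomposition being unique up to the shift $(\alpha_1,\alpha_2)\mapsto(\alpha_1+s[F_1],\alpha_2-s[F_2])$ coming from the relation above. The whole problem becomes: choose the shift $s$ so that \emph{both} $\alpha_1$ and $\alpha_2$ become effective. To extract positivity I would restrict to a fibre $W_t=W_{1,t}\times W_{2,t}$, which is a genuine product with projections $\pi_i\colon W_t\to W_{i,t}$. The decomposition restricts to $[D]|_{W_t}=\pi_1^*(\alpha_1|_{W_{1,t}})+\pi_2^*(\alpha_2|_{W_{2,t}})$, and restricting the effective class $[D]|_{W_t}$ to the covering families of slices $W_{1,t}\times\{y\}$ and $\{x\}\times W_{2,t}$ shows that $\alpha_1|_{W_{1,t}}$ and $\alpha_2|_{W_{2,t}}$ are pseudo-effective on the fibres of $\phi_1$ and $\phi_2$. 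This already isolates the positivity hidden in hypothesis (2): the restriction of a class from $W$ to a product fibre is automatically ``factor by factor''.

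The hard part, and the main obstacle, is to upgrade this \emph{fibrewise} pseudo-effectivity to genuine effectivity of $\alpha_1$ and $\alpha_2$ on the total spaces $W_1$ and $W_2$. The difficulty is structural rather than computational: because the two pullback cones share the single class $[F]=p_1^*[F_1]=p_2^*[F_2]$, any attempt to make $\alpha_1$ effective by adding fibres of $\phi_1$ forces subtracting fibres of $\phi_2$ from $\alpha_2$, so the two effectivity requirements compete through the one shift parameter $s$, while fibrewise data alone only controls the degrees $\alpha_i\cdot[F_i]\ge 0$. This single obstacle subsumes the genuinely delicate prime divisors, namely those dominating both $W_1$ and $W_2$ and those appearing as components $G_a\times G_b'$ of reducible fibres (for which the general fibre gives no information). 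I would resolve it by using the effectivity of $D$ over \emph{all} of $\PP^1$ rather than over the generic point: analysing $D$ relatively over $W_1$ and over $W_2$ through the flat fibre-product structure of $p_1,p_2$ to produce honest effective representatives of $\alpha_1,\alpha_2$ whose admissible shift-windows for $s$ overlap. Feeding the resulting effective $\alpha_i$ back through hypothesis (2) then yields $[D]\in p_1^*\Eff(W_1)+p_2^*\Eff(W_2)$, and summing over the prime components of an arbitrary effective divisor completes the reverse inclusion.
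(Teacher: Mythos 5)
Your setup is sound: the easy inclusion, the reduction via hypothesis (2) to a decomposition $[D]=p_1^*\alpha_1+p_2^*\alpha_2$ unique up to the shift $(\alpha_1,\alpha_2)\mapsto(\alpha_1+s[F_1],\alpha_2-s[F_2])$, and the diagnosis that everything comes down to finding one value of $s$ making both pieces effective. But the proof has a genuine gap exactly at the point you yourself flag as ``the hard part'': the final paragraph announces that you will ``produce honest effective representatives of $\alpha_1,\alpha_2$ whose admissible shift-windows for $s$ overlap,'' which is a restatement of the goal, not an argument. Your only actual positivity input is fibrewise: restricting to slices of a product fibre gives $\alpha_i\cdot[F_i]$-type nonnegativity (pseudo-effectivity of $\alpha_i$ restricted to fibres of $\phi_i$), and as you concede this cannot control effectivity on the total spaces $W_i$, nor the relative position of the two shift-windows. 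Moreover, since effectivity (as opposed to pseudo-effectivity) is not a numerical condition, no amount of manipulation of classes in $N^1(W_i)_{\RR}$ alone will produce the required effective divisors; at some point you must produce sections.

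The missing mechanism, which is how the paper closes the gap, is to push forward to $\PP^1$. After replacing the effective class by a multiple, write it as an effective line bundle $L=p_1^*L_1\otimes p_2^*L_2$ with $L_i\in\Pic(W_i)$. The projection formula and flat base change for the fibre square give
\begin{equation*}
\phi_*L \;=\; \phi_{1*}L_1\otimes \phi_{2*}L_2 ,
\end{equation*}
and $H^0(W,L)=H^0(\PP^1,\phi_*L)\neq 0$ because $L$ is effective. Since every vector bundle on $\PP^1$ splits as a sum of line bundles, the existence of a section of $\phi_{1*}L_1\otimes\phi_{2*}L_2$ forces summands $\OO_{\PP^1}(a_1)\subset\phi_{1*}L_1$ and $\OO_{\PP^1}(a_2)\subset\phi_{2*}L_2$ with $a_1+a_2\ge 0$; twisting by $\phi_1^*\OO_{\PP^1}(-a_1)$ and $\phi_2^*\OO_{\PP^1}(a_1)$ then makes both factors simultaneously effective. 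This is precisely the quantitative statement that your two shift-windows overlap, and it is obtained from the global effectivity of $L$ over $\PP^1$ in one stroke, with no case analysis of prime divisors (horizontal, vertical, or components of reducible fibres) needed. Without this step, or an equivalent substitute, your argument does not go through.
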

    
    \begin{proof} 
    The proof is similar to Namikawa's argument~\cite[p. 153]{Na91}. 
    We draw the commutative diagram here for reader's convenience: 
    \[ \xymatrix@=1.5em{ & W \cnec W_1 \times_{\PP^1} W_2 \ar[dl]_{p_1} \ar[dd]_{\phi} \ar[dr]^{p_2}   \\
	W_1 \ar[dr]_{\phi_1} & &     W_2 \ . \ar[dl]^{\phi_2}   \\
	& \PP^1 } \]
    Take an arbitrary integral element in $\Eff(X)$,  
    represented by an effective line bundle $L$. 
    Since $p_i^{\ast}$ is defined over $\QQ$, (2) implies that up to replacing $L$ by a positive multiple of it, we have $L = p_1^{\ast}L_1\otimes p_2^{\ast}L_2$ with $L_i\in \Pic(W_i)$. Then 
    \begin{align*}
        \phi_{\ast}L &= \phi_{1\ast}p_{1\ast} \left( p_1^{\ast}L_1\otimes p_2^{\ast}L_2 \right) \\
        &= \phi_{1\ast} \left( L_1\otimes p_{1\ast}(p_2^{\ast}L_2) \right) \\
        &= \phi_{1\ast} \left( L_1\otimes \phi_1^{\ast}(\phi_{2\ast}L_2) \right) \\
        &= \phi_{1\ast} L_1\otimes \phi_{2\ast}L_2, 
    \end{align*}
    where the second and the last equalities follow from the projection formula, and the third equality follows from the flat base change theorem. Note that $\phi_{\ast}L$ has a non-zero section as $L$ is effective. 
    Moreover, ${\phi_i}_{\ast}L_i$ as a vector bundles 
    on $\PP^1$ is a direct sum of line bundles. So there are summands $\OO_{\PP^1}(a_i)$ of $\phi_{i\ast}L_i$ such that $a_1 + a_2 \geq 0$. Without loss of generality, we may assume $a_1 \geq 0$. Then both $\phi_{1\ast}L_1 \otimes \OO_{\PP^1}(-a_1)$ and $\phi_{2\ast}L_2 \otimes \OO_{\PP^1}(a_1)$ have non-zero sections. Now let $M_1:= L_1 \otimes \phi_1^{\ast}\OO_{\PP^1}(-a_1)$ and $M_2:= L_2 \otimes \phi_2^{\ast}\OO_{\PP^1}(a_1)$. Then $L = p_1^{\ast}M_1\otimes p_2^{\ast}M_2$, and both $M_i$ are effective as $\phi_{i\ast}M_i$ have non-zero sections. 
    \end{proof}

\ssec{Proof of Theorem \ref{thm_mainSchoen}} 

We start by proving the following lemma.

\begin{lemma}\label{lem-dimZ2}
Let $\phi: W \to \PP^1$ be a relatively minimal rational elliptic surface with a section. There exists a polyhedral cone $\Pi\subset\Eff(W)$ such that
$$\Aut(W/\PP^1)\cdot \Pi = \Eff(W)$$
where $\Aut(W/\PP^1) := \left\{ f \in \Aut(W) : \phi\circ f = \phi \right\}.$
\end{lemma}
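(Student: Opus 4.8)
The plan is to realize $W$ as the underlying surface of a klt Calabi--Yau pair and to deduce the statement from the two-dimensional case already settled in Corollary~\ref{cor_eff_two}, transferring the conclusion from the automorphism group of the pair down to the subgroup $\Aut(W/\PP^1)$.

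First I would equip $W$ with a log Calabi--Yau structure. Since $\phi$ is a relatively minimal rational elliptic surface with a section, one has $K_W\equiv -F$, where $F$ denotes the class of a fiber. Choosing two distinct points $b_1,b_2\in\PP^1$ whose fibers $F_i\cnec\phi^{-1}(b_i)$ are smooth (possible, as only finitely many fibers are singular), I set $\Delta\cnec\tfrac12 F_1+\tfrac12 F_2$. The two components are disjoint smooth curves with coefficient $\tfrac12<1$, so $(W,\Delta)$ is klt; moreover $\Delta\equiv\tfrac12 F+\tfrac12 F=F\equiv -K_W$, so that $K_W+\Delta\equiv 0$. As $W$ is a smooth projective surface, $(W,\Delta)$ is a klt Calabi--Yau pair of dimension two, and Corollary~\ref{cor_eff_two} furnishes a rational polyhedral cone $\Pi_0\subset\Eff(W)$ with $\Aut(W,\Delta)\cdot\Pi_0=\Eff(W)$.

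The key step is to compare $\Aut(W/\PP^1)$ with $\Aut(W,\Delta)$. On one hand, any $g\in\Aut(W/\PP^1)$ fixes every fiber, in particular $F_1$ and $F_2$, hence preserves $\Supp\Delta$; thus $\Aut(W/\PP^1)\subseteq\Aut(W,\Delta)$. On the other hand, every automorphism of $W$ preserves $K_W$, hence the class $[F]=-K_W$ and the anticanonical pencil $|{-}K_W|$, so it permutes the fibers and induces an element of $\Aut(\PP^1)=\PGL_2$; this yields a homomorphism $\rho\colon\Aut(W)\to\PGL_2$ whose kernel is exactly $\Aut(W/\PP^1)$. I expect the finiteness of $\rho(\Aut(W,\Delta))$ to be the main point of the argument. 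It follows from a counting argument on the base: the finite set $\Sigma\subset\PP^1$ of points carrying singular fibers is preserved by $\rho(\Aut(W))$ and is nonempty (e.g.\ because $e(W)=12\neq 0$), while $\rho(\Aut(W,\Delta))$ additionally preserves $\{b_1,b_2\}$; since the $b_i$ were chosen outside $\Sigma$, the preserved set $\Sigma\cup\{b_1,b_2\}$ has at least three points, and an element of $\PGL_2$ fixing three points is trivial, so $\rho(\Aut(W,\Delta))$ embeds into the finite symmetric group of $\Sigma\cup\{b_1,b_2\}$. Hence $\Aut(W/\PP^1)=\ker\rho$ has finite index, say $m$, in $\Aut(W,\Delta)$.

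Finally I would assemble the cone. Writing $\Aut(W,\Delta)=\bigsqcup_{j=1}^{m} g_j\,\Aut(W/\PP^1)$ and using the contravariance relation $(g_j a)^{*}=a^{*}g_j^{*}$, one obtains $\Eff(W)=\bigcup_{j=1}^{m}\Aut(W/\PP^1)\cdot(g_j^{*}\Pi_0)$. I would then take $\Pi$ to be the convex cone generated by the finitely many rational polyhedral cones $g_j^{*}\Pi_0$; it is itself rational polyhedral, and each $g_j^{*}\Pi_0$ lies in $\Eff(W)$ because automorphisms preserve the effective cone, so $\Pi\subset\Eff(W)$. Since $g_j^{*}\Pi_0\subset\Pi$ for every $j$, this gives the inclusion $\Eff(W)\subseteq\Aut(W/\PP^1)\cdot\Pi$, while the reverse inclusion is immediate from $\Pi\subset\Eff(W)$ and the invariance of $\Eff(W)$ under $\Aut(W/\PP^1)$. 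Hence $\Aut(W/\PP^1)\cdot\Pi=\Eff(W)$, as desired.
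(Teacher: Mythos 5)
Your argument is correct, and its overall strategy matches the paper's: endow $W$ with a klt Calabi--Yau pair structure $(W,\Delta)$ supported on fractional multiples of smooth fibers, invoke the two-dimensional cone theorem of Totaro through the equivalence of cone conjectures in dimension two to obtain a rational polyhedral cone $\Pi_0$ with $\Aut(W,\Delta)\cdot\Pi_0=\Eff(W)$, and then pass from $\Aut(W,\Delta)$ to $\Aut(W/\PP^1)$. Where you genuinely diverge is in this last step. The paper takes $\Delta=\tfrac15(F_1+\cdots+F_5)$ for five smooth fibers over points chosen general enough that their set-stabilizer in $\PGL_2$ is trivial; since every automorphism of $W$ descends to $\PP^1$ (as $-K_W\sim F$ recovers the fibration), this forces the equality $\Aut(W,\Delta)=\Aut(W/\PP^1)$ on the nose and the conclusion is immediate with $\Pi=\Pi_0$. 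You instead take two fibers with coefficient $\tfrac12$, accept that $\Aut(W,\Delta)$ may be strictly larger, and show that $\Aut(W/\PP^1)$ has finite index in it by combining the nonempty set of singular values of $\phi$ with $\{b_1,b_2\}$ to produce at least three points of $\PP^1$ preserved by the image in $\PGL_2$; you then enlarge $\Pi_0$ to the cone spanned by finitely many coset translates $g_j^*\Pi_0$, which suffices because the lemma only asks for a polyhedral cone covering $\Eff(W)$ under the group action, not a fundamental domain. Both routes are sound. The paper's choice of five fibers buys a one-line conclusion at the price of a genericity condition (five is really needed: the set-stabilizer of any four points of $\PP^1$ already contains a Klein four-group), whereas your version requires no genericity beyond smoothness of the two fibers but needs the extra inputs that a relatively minimal rational elliptic surface has a singular fiber (via $e(W)=12$) and the finite-index/translation step at the end.
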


\begin{proof} This is essentially a consequence of Totaro's results \cite{To10}. We still provide a proof, because our setting is slightly different than Totaro's.
We take smooth fibers $F_1,\ldots,F_4$ of $\phi$ above four distinct points $P_1,\ldots,P_4$ of $\PP^1$ which are general enough that $\Aut(\PP^1,P_1+\ldots+P_4)$ is trivial. 

We set 
$$\Delta\cnec \frac{1}{4}F_1 + \cdots + \frac{1}{4}F_4.
$$
Then $\Supp \Delta$ is simple normal crossing, because these $F_i$'s are smooth and pairwise disjoint.  So the pair $(W, \Delta)$ is klt. 
By~\cite[Proposition 4.9.3(1)]{EnquesBookI}, 
$-K_W \sim F_i$ for each $i$, and thus, $K_W + \Delta \sim_{\QQ} 0$. This shows that $(W, \Delta)$ is Calabi--Yau. Now we can apply \cite[Theorem 4.1]{To10} to conclude that the pair $(W, \Delta)$ satisfies the nef cone conjecture.
Hence, by Theorem \ref{main_thm-equiv} (and since any small $\QQ$-factorial modification is an isomorphism, in dimension 2), this pair also satisfies the effective cone conjecture, i.e., there is a rational polyhedral cone
$\Pi\subset\Eff(W)$ such that
$$\Aut\left(W,\Delta\right)\cdot\Pi = \Eff(W).$$
By our choice of $P_1,\ldots, P_4$, we have that $\Aut(W/\PP^1) = \Aut\left(W,\Delta\right)$, which concludes this proof. 
\end{proof}

We can now proceed to the proof of Theorem \ref{thm_mainSchoen}.

\begin{proof}[Proof of Theorem \ref{thm_mainSchoen}]
   We start by recalling the setting. Let $X$ be a smooth Calabi--Yau threefold obtained as a fiber product  $W_1\times_{\PP^1}W_2$, where $\phi_i\colon W_i\to\PP^1$ is a relatively minimal rational elliptic surface with a section for $i=1,2$.
    
    We assume that the generic fibers of $\phi_1$ and $\phi_2$ are non-isogenous. Hence, by \cite[Lemma 5.1]{GLW22} (see also \cite[Proof of Proposition 1.1]{Na91} for a proof under the additional assumption that all singular fibers of the $\phi_i$ are of type $I_1$), we have a decomposition
$$p_1^*N^1(W_1)_{\RR}+p_2^*N^1(W_2)_{\RR}=N^1(X)_{\RR},$$
where each $p_i:X\to W_i$ is the projection.

    By Proposition \ref{prop_decom_eff}, we have
$p_1^*\Eff(W_1) + p_2^*\Eff(W_2) = \Eff(X).$ By Lemma \ref{lem-dimZ2}, there exists a polyhedral cone $\Pi_i \subset \Eff(W_i)$ such that $\Aut(W_i/\PP^1) \cdot \Pi_i = \Eff(W_i)$. Let us introduce the polyhedral cone $\Sigma \subset \Eff(X)$ generated by $p_1^{\ast}\Pi_1$ and $p_2^{\ast}\Pi_2$. Since $\Aut(X)$ contains $\Aut(W_1/\PP^1)\times\Aut(W_2/\PP^1)$, this yields $$\Aut(X)\cdot \Sigma = \Eff(X).$$ 
Thus, by Proposition \ref{pro-looij}, the effective cone conjecture holds for $X$. Since good minimal models exist in dimension $3$, the movable cone conjecture, as well as all of the statements (1) to (4) in Theorem~\ref{main_thm-equiv} holds for $X$. 
\end{proof}

\begin{proof}[Proof of Corollary \ref{cor_ratpol}]
The assumptions of Corollary \ref{cor_ratpol} are the same as in~\cite[p. 152]{Na91}. Hence,~\cite[p. 162]{Na91} shows that any minimal model $X'$ of $X$ with $X'\not\cong X$ has finite automorphism group. 
By Theorem \ref{thm_mainSchoen}, the nef cone conjecture holds for $X'$.  Hence $\Nef(X')$, as a union of finitely many rational polyhedral cones, is itself rational polyhedral. 
\end{proof}

\begin{remark} We may also allow $W_i$ to be a weak del Pezzo surface with a fibration $W_i \to \PP^1$. Take the fiber product $W:= W_1\times_{\PP^1} W_2$ and assume that $W$ is smooth. Then we can get a klt Calabi--Yau pair $(W, \Delta)$ for a suitable $\Delta$, and Statements (1) to (4) in Theorem~\ref{main_thm-equiv} 
also hold for $(X, \Delta)$. 
\end{remark}

\bibliographystyle{alpha}
\bibliography{Mov}

\end{document}